\newcommand{\arxiv}[2][]{\ifthenelse{\equal{#1}{}}
{\href{http://arxiv.org/abs/#2}{\tt arXiv:#2}}
{\href{http://arxiv.org/abs/math/#2}{\tt arXiv:math.#1/#2}}}
\theoremstyle{plain}
\newtheorem{maintheorem}{Theorem}
\newtheorem{theorem}{Theorem}[section]
\newtheorem{lemma}[theorem]{Lemma}
\newtheorem*{corollary*}{Corollary}
\newtheorem{proposition}[theorem]{Proposition}
\newtheorem{problem}[theorem]{Problem}
\theoremstyle{definition}
\newtheorem{example}[theorem]{Example}
\newtheoremstyle{remark}
{}{}{}{}{\itshape}{}{ }{\thmname{#1}\thmnumber{ \itshape #2.}}
\theoremstyle{remark}
\newtheorem{remark}[theorem]{Remark}
\def\N{\mathbb{N}} 
\def\R{\mathbb{R}} 
\def\Z{\mathbb{Z}}
\def\K{\mathcal{K}}
\def\QQ{\mathcal{Q}}
\def\PP{\mathcal{P}}
\def\Ho{\mathsf{Ho}}
\def\x{\times}
\def\but{\setminus}
\def\phi{\varphi} 
\def\xr#1{\xrightarrow{#1}} 
 \renewcommand{\:}{\colon}
\DeclareMathOperator*{\colim}{colim}
\DeclareMathOperator{\im}{im} 
\DeclareMathOperator{\Int}{Int} \DeclareMathOperator{\id}{id}
\def\tph#1{\raise2.5pt\hbox{\the\textfont1\char"7F}\!\!#1}
\def\tpm#1{\raise0pt\hbox{\the\textfont1\char"7F}\!#1}
\def\tpl#1{\lower1.5pt\hbox{\the\textfont1\char"7F}\!#1}
\font\ssym=cmbsy5 at 2.5pt 
\def\striangle{\text{\ssym\char"34}}
\def\sing#1#2{\smash{\overset\striangle{\smash{#1}
\vphantom{\vrule height#2pt}}}}
\def\spi{\sing\pi{3.5}}
\def\bydef{\mathrel{\mathop:}=}
\DeclareSymbolFont{bskadd}{U}{bskma}{m}{n}
\DeclareFontFamily{U}{bskma}{\skewchar\font130 }
\DeclareFontShape{U}{bskma}{m}{n}{<->bskma10}{}
\DeclareMathSymbol{\varlrttriangle}{\mathord}{bskadd}    {"E4}
\newcommand*\nullseq{\mathop{\mathpalette\@biguoperator{\bigsqcup}}}
\newcommand*\@biguoperator[2]{\ooalign{\hidewidth$#1$\raisebox{1pt}{\scriptsize$\star$}\hidewidth\cr$#1#2$\cr}}
\begin{document}

\title{Fine shape. II: A Whitehead-type theorem}
\author{Sergey A. Melikhov}
\address{Steklov Mathematical Institute of Russian Academy of Sciences,
ul.\ Gubkina 8, Moscow, 119991 Russia}
\email{melikhov@mi-ras.ru}

\begin{abstract} We prove an ``abelian, locally compact'' Whitehead theorem in fine shape: 
A fine shape morphism between locally connected finite-dimensional locally compact separable 
metrizable spaces with trivial $\pi_0$ and $\pi_1$ is a fine shape equivalence 
if and only if it induces isomorphisms on the $\pi_i$ (=the Steenrod--Sitnikov homotopy groups).

We show by an example that the hypothesis of local connectedness cannot be dropped (even though
it can be dropped in the compact case).

As a byproduct, we also show that for a locally compact separable metrizable space $X$,
the Steenrod--Sitnikov homology $H_n(X)=0$ if and only if each compactum $K\subset X$ lies 
in a compactum $L\subset X$ such that the map $H_n(K)\to H_n(L)$ is trivial.

A cornerstone result of the paper is purely algebraic: 
If a direct sequence of groups $\Gamma_0\to\Gamma_1\to\dots$ has trivial colimit, then it is trivial 
as an ind-group (i.e.\ each $\Gamma_i$ maps trivially to some $\Gamma_j$), as long as it has one of 
the following forms:

$\bullet$ $\lim^1_i G_{i0}\to\lim^1_i G_{i1}\to\dots$, where the $G_{ij}$ are countable abelian groups;

$\bullet$ $\lim_i G_{i0}\to\lim_i G_{i1}\to\dots$, where the $G_{ij}$ are finitely generated groups,
which are either all abelian or satisfy the Mittag-Leffler condition for each $j$.
\end{abstract}

\maketitle

\section{Introduction}

\subsection{Algebra}

The following assertion is obvious:

\begin{proposition} \label{obvious}
If $\Gamma_0\to\Gamma_1\to\dots$ is a direct sequence of finite pointed sets with trivial $\colim\Gamma_i$, then 
for each $j$ there exists a $k>j$ such that the map $\Gamma_j\to\Gamma_k$ is trivial.
\end{proposition}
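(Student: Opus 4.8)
The plan is to unwind the definition of the colimit of a direct sequence of pointed sets and then exploit finiteness to promote a pointwise statement to a uniform one. Write $f_{im}\:\Gamma_i\to\Gamma_m$ for the structure maps of the sequence (for $i\le m$), each preserving the basepoint $*$. Recall that in a colimit of pointed sets two elements are identified precisely when they already agree after being pushed far enough along the sequence; in particular an element $x\in\Gamma_i$ represents the basepoint of $\colim\Gamma_i$ if and only if there is some $m\ge i$ with $f_{im}(x)=*$. So the first step is simply to record the translation of the hypothesis: $\colim\Gamma_i=*$ says exactly that every element of every $\Gamma_i$ eventually dies, i.e.\ for each $i$ and each $x\in\Gamma_i$ there exists an $m\ge i$ with $f_{im}(x)=*$. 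This is a statement of the form ``for each $x$ there exists $m$,'' so a priori the stage $m$ depends on $x$.

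Next, fix $j$ and use that $\Gamma_j$ is a \emph{finite} set, say $\Gamma_j=\{x_1,\dots,x_n\}$. For each $x_s$ choose an $m_s\ge j$ with $f_{j m_s}(x_s)=*$, and set $k=\max(m_1,\dots,m_n,\,j+1)$, so in particular $k>j$. Since the structure maps are compatible and preserve the basepoint, for every $s$ we have $f_{jk}(x_s)=f_{m_s k}\bigl(f_{j m_s}(x_s)\bigr)=f_{m_s k}(*)=*$, whence $f_{jk}$ is the trivial map, as required.

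The only ingredient beyond this bookkeeping is the finiteness of $\Gamma_j$, which is what lets us replace the finitely many individually chosen stages $m_s$ by their maximum and thereby kill all of $\Gamma_j$ at a single stage $k$. There is no genuine obstacle here, which is precisely why the statement is labelled obvious; it is stated explicitly because it is the prototype that the later, substantive algebraic results aim to recover in settings (ind-groups arising from $\lim$ and $\lim^1$) where the groups $\Gamma_i$ are no longer finite and this elementary maximization argument is unavailable.
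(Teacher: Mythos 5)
Your proof is correct and is exactly the argument the paper has in mind: the paper offers no proof at all, introducing the statement with ``The following assertion is obvious,'' and the intended argument is precisely your two steps (triviality of the colimit means every element dies at some finite stage, and finiteness of $\Gamma_j$ lets you take the maximum of those stages). Nothing to add.
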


Not only finite pointed sets satisfy this property.

\begin{maintheorem} \label{bounded-colim}
Let us consider a direct sequence of towers of countable groups:
\[\begin{tikzcd}[row sep=1.5em,column sep=1.5em]
\vdots\dar&\vdots\dar&\vdots\dar& \\
G_{20}\rar\dar&G_{21}\rar\dar&G_{22}\rar\dar&\dots\\
G_{10}\rar\dar&G_{11}\rar\dar&G_{12}\rar\dar&\dots\\
G_{00}\rar&G_{01}\rar&G_{02}\rar&\dots.
\end{tikzcd}\]
Let us consider the following cases:

(a) Let $\Gamma_j=\lim_i G_{ij}$. 
Suppose that each $G_{ij}$ is abelian and finitely generated. 

(b) Let $\Gamma_j=\lim_i G_{ij}$. 
Suppose that $\lim^1_i G_{ij}$ vanish and each $G_{ij}$ is finitely generated.

(c) Let $\Gamma_j=\lim^1_i G_{ij}$. 
Suppose that each $G_{ij}$ is abelian.

(d) Let $\Gamma_j=\lim_i\lim^1_k G_{ij}^{(k)}$, where $G_{ij}^{(k)}=\im(G_{kj}\to G_{ij})$.

\noindent
In each of (a)--(d), if $\colim_j\Gamma_j$ is trivial, then for each $j$ there exists a $k>j$ such that 
the map $\Gamma_j\to\Gamma_k$ is trivial.
\end{maintheorem}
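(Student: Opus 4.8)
The plan is to exploit that each $\Gamma_j$, or a natural Polish group presenting it, is a complete separable metrizable topological group, and to run a Baire category argument on the filtration of $\Gamma_j$ by the kernels of the structure maps. Writing $K_k=\ker(\Gamma_j\to\Gamma_k)$, triviality of $\colim_j\Gamma_j$ says exactly that $\Gamma_j=\bigcup_{k>j}K_k$, an increasing union of subgroups. The idea is: if these subgroups have the Baire property, then by the Baire category theorem some $K_{k_0}$ is non-meager, and by Pettis' theorem a non-meager subgroup with the Baire property is open; so after finitely many steps the map $\Gamma_j\to\Gamma_{k_0}$ has open kernel, i.e.\ its image is a discrete — hence, by separability, countable — quotient of $\Gamma_j$. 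It then remains to kill this countable quotient, and this last step is where the distinctions between (a)--(d) enter.

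For (a) and (b) I would first reduce to surjective bonding maps. In (a) the $G_{ij}$ are finitely generated abelian, hence Noetherian, so the images $\im(G_{i'j}\to G_{ij})$ stabilize in $i'$ (Mittag-Leffler); in (b) Mittag-Leffler is assumed and $\lim^1_i G_{ij}$ vanishes. In either case $\Gamma_j=\lim_i G_{ij}$ is the limit of a surjective tower of finitely generated groups, so it is a pro-(finitely generated) Polish group: the subgroups $U_N=\ker(\Gamma_j\to G_{Nj}^{\mathrm{stab}})$ form a neighborhood basis of the identity with finitely generated quotients $\Gamma_j/U_N$. The kernels $K_k$ are closed, since the structure maps are continuous, hence have the Baire property, so the argument above makes some $K_{k_0}$ open. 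An open subgroup contains a basic $U_N$, whence $\im(\Gamma_j\to\Gamma_{k_0})=\Gamma_j/K_{k_0}$ is a quotient of the finitely generated group $\Gamma_j/U_N$, and so is itself finitely generated. A finitely generated group each of whose (finitely many) generators dies somewhere in the colimit dies uniformly — take the largest of the finitely many stages. Composing, $\Gamma_j\to\Gamma_{k'}$ is trivial for this $k'$.

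For (c) there is no limit topology on $\lim^1$ directly, so I would work with the standard presentation: let $P_j=\prod_i G_{ij}$, a Polish abelian group, and $d_j\colon P_j\to P_j$, $(g_i)\mapsto(g_i-\phi_i g_{i+1})$, so that $\lim^1_i G_{ij}=\coker d_j=P_j/\im d_j$ and the structure maps are induced by the coordinatewise maps $\psi_{jk}\colon P_j\to P_k$. The \emph{death set} $L_k=\psi_{jk}^{-1}(\im d_k)$ is the preimage under a continuous homomorphism of the continuous image $\im d_k$ of the Polish group $P_k$; it is therefore analytic and has the Baire property, and triviality of the colimit gives $\bigcup_{k>j}L_k=P_j$. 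As before Baire plus Pettis make some $L_{k_0}$ open, so $\im(\Gamma_j\to\Gamma_{k_0})=P_j/L_{k_0}$ is a countable group — a quotient of $\prod_{i\in F}G_{ij}$ for a finite set $F$, since $L_{k_0}$ contains a basic open subgroup. Case (d), where $\Gamma_j=\lim_i\lim^1_k G_{ij}^{(k)}$, I would set up through the analogous product presentation of the inner $\lim^1_k$ together with the pro-structure of the outer $\lim_i$, reducing it to the mechanism of (c) layered over that of (a).

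The crux — and the step I expect to be the main obstacle — is precisely the final reduction in (c) and (d): after the Baire argument the structure map has open kernel and countable image, but unlike in (a)/(b) this image need not be finitely generated, and a countably infinite group can have each element die in the colimit without dying uniformly. This is where I expect to need the special rigidity of derived limits of countable abelian groups — in particular the dichotomy that such a $\lim^1$ is either $0$ or of cardinality the continuum — to force the countable image to be already trivial, or to bootstrap the Baire argument one further level. Proposition~\ref{obvious} is the finite prototype of the whole statement; the content of Theorem~\ref{bounded-colim} is exactly that enough of this finiteness survives — through Noetherianity in (a)/(b) and through the structure of $\lim^1$ in (c)/(d) — to defeat the $\bigoplus\Z/2$–type behavior available for arbitrary countable groups.
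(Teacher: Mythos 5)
Your Baire--Pettis strategy does work for cases (a) and (b), and it is a genuinely different route from the paper's, which argues by contradiction and builds an explicit ``infinite sum'' thread lying outside every kernel $K_k$. One claim needs repair, though: finitely generated abelian groups are Noetherian, but Noetherian is the \emph{ascending} chain condition, while Mittag-Leffler asks the \emph{descending} chain of images $\im(G_{i'j}\to G_{ij})$ to stabilize; the tower $\cdots\xr{2}\Z\xr{2}\Z$ of finitely generated abelian groups fails ML. Fortunately you never need ML in (a): take $U_N=\ker(\Gamma_j\to G_{Nj})$; then $\Gamma_j/U_N$ embeds in $G_{Nj}$ and is finitely generated because subgroups of finitely generated \emph{abelian} groups are finitely generated. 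In (b), ML is not an assumption but a consequence of $\lim^1=1$ plus countability (Gray's theorem, which the paper also invokes), and then the stable image is finitely generated as a homomorphic image of some $G_{kj}$. With these repairs your (a) and (b) are complete and arguably cleaner than the paper's; what the paper's summation technique buys is that it survives in settings where no Polish topology exists, which is exactly what goes wrong below.

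For (c) the Baire half of your argument is sound (the death sets $L_k$ are analytic, hence have the Baire property, and Pettis applies), but the step you yourself flag as the crux is a genuine gap, and the dichotomy you propose does not fill it as stated. The dichotomy ``$\lim^1$ of a tower of countable abelian groups is $0$ or of cardinality continuum'' applies to groups that \emph{are} such a $\lim^1$; the countable group you must kill is the \emph{image} of $\Gamma_j\to\Gamma_{k_0}$, and a quotient of a $\lim^1$ need not be a $\lim^1$, nor does any cardinality dichotomy pass to quotients: for instance $\lim^1(\cdots\xr{2}\Z\xr{2}\Z)$ is divisible of cardinality continuum and therefore surjects onto $\Q$, a countable nontrivial group. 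The missing bridge is precisely the paper's Lemma~\ref{level-factorization}(b) (the ``half'' of Theorem~\ref{factorization-main}): for a level map of towers of abelian groups, the image of the induced map on $\lim^1$ is isomorphic to $\lim^1 B_i$ for a tower of subgroups $B_i$ of the target tower --- hence again a $\lim^1$ of countable abelian groups. Granting that lemma, your plan closes (Baire makes the image countable, the lemma makes it a $\lim^1$, the dichotomy makes it zero), and it would even prove (c) independently of (d), whereas the paper deduces (c) from (d) via Proposition~\ref{bounded-colim3}. But that factorization lemma is a substantive result you would have to discover and prove; without it, your (c) is not a proof.

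Case (d) is where the proposal has essentially no content, and where the Polish framework itself breaks down: the inner groups $\lim^1_k G_{ij}^{(k)}$ are quotients of Polish groups by analytic, in general non-closed subgroups, so the quotient topology need not be Hausdorff and there is no natural Polish structure on them; consequently $\Gamma_j=\lim_i\lim^1_k G_{ij}^{(k)}$ is at best an analytic subquotient of $\prod_{i,k}G_{ij}$, and the Baire category theorem is not available for such groups. Note also that your intended reduction runs opposite to the paper's logic: there, (d) is the hard case, proved directly by a two-case analysis combining infinite summation with a counting argument over almost-equal subsets of $\N$, and (c) is then deduced from (d). So ``the mechanism of (c) layered over that of (a)'' would require, at minimum, a Polish presentation of $\Gamma_j$ that does not exist.
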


The hypothesis that each $G_{ij}$ is finitely generated cannot be dropped in (a) and (b).%
\footnote{If $G_{ij}=\bigoplus_{k=j}^\infty\Z$, each vertical bonding map
$G_{i+1,j}\to G_{ij}$ is an isomorphism, and each horizontal bonding map
$G_{i,j+1}\to G_{ij}$ is the projection, then $\colim_j\lim_i G_{ij}=0$ but
no map $\lim_i G_{ij}\to\lim_i G_{ik}$ is trivial.}

The hypothesis that each $G_{ij}$ is abelian also cannot be dropped in (a), as shown in 
Example \ref{unbounded-colim}.
The same example shows that the hypothesis that $\lim^1_i G_{ij}$ vanishes for each $j$ cannot be dropped in (b).

The hypothesis that the $G_{ij}$ are groups (rather than pointed sets) also cannot be dropped in (b), as one can see
from Example \ref{comb-example2}(b).%
\footnote{More precisely, let $G_{ij}$ be finite pointed sets forming the diagram of Theorem \ref{bounded-colim}. 
(Let us note that in this case each vertical tower $\dots\to G_{1j}\to G_{0j}$ satisfies the Mittag-Leffler condition.) 
Then it may happen that $\colim_j\Gamma_j$ is trivial, where $\Gamma_j=\lim_i G_{ij}$, but no map $\Gamma_j\to\Gamma_k$ is trivial.
Namely, this happens for $G_{ij}$ equal to the $\pi_0$ of appropriate nerves $P_{ij}$ of appropriate compact subsets $K_j$ 
of the locally compact space of Example \ref{comb-example2}(b).}

The proof of Theorem \ref{bounded-colim} is based on ``summation of an infinite series''.
Part (c) is deduced from (d), and the proof of (d) is the most difficult step.
We also provide a much shorter proof of the special case of (c) where all vertical maps in the diagram are injective
(Proposition \ref{bounded-colim2}).%
\footnote{Let us note that if all vertical maps in the diagram are surjective, then (c) holds trivially.}
While this special case is not directly used in the other proofs, the proof of (d) essentially combines 
a version of the proof of Proposition \ref{bounded-colim2} with an argument similar to the proof of (a).
At any rate the proofs of (a) and (b) involve working with two parameters, the proof of Proposition \ref{bounded-colim2}
with three, and the proof of (d) with four --- so these proofs should be easier to follow when read in this order.

The proof of Theorem \ref{bounded-colim}(c) additionally depends on a ``half'' of the following lemma.

\begin{maintheorem} \label{factorization-main}
Let $f=(f_i\:A_i\to C_i)_{i\in\N}$ be a level morphism between towers of abelian groups.
Suppose that $f$ factors through a tower of abelian groups $B_i$ such that the maps 
$\lim^1 A_i\to\lim^1 B_i$ and $\lim B_i\to\lim C_i$ are trivial.
Then $f$ also factors through a tower of abelian groups $G_i$ such that $\lim G_i=0$
and $\lim^1 G_i=0$.
\end{maintheorem}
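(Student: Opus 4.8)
The plan is to encode each tower by its ``difference'' operator, reduce the statement to the production of a single contracting homotopy, and then realize the factorization through an explicit tower whose derived limits vanish.

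\smallskip
First I would set up the standard reformulation. For a tower $X=(X_i,\ x_i\colon X_{i+1}\to X_i)$ write $d_X\colon\prod_i X_i\to\prod_i X_i$ for $(\xi_i)\mapsto(\xi_i-x_i\xi_{i+1})$, so that $\lim X=\ker d_X$, $\lim^1 X=\coker d_X$, and a level map $u$ induces $\prod_i u_i$ commuting with the operators. In this language the two hypotheses become $\im(\prod_i g_i)\subseteq\im d_B$ (triviality of $\lim^1 A\to\lim^1 B$) and $\ker d_B\subseteq\ker(\prod_i h_i)$ (triviality of $\lim B\to\lim C$), and the conclusion is equivalent to exhibiting level maps $A\to G\to C$ composing to $f$ with $d_G$ bijective.

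\smallskip
The ``clean half'' is a canonical homotopy. Since $\ker d_B\subseteq\ker\prod_i h_i$, the map $\prod_i h_i$ descends to a homomorphism $\bar h\colon\im d_B\to\prod_i C_i$ with $\bar h\circ d_B=\prod_i h_i$; since $\im\prod_i g_i\subseteq\im d_B$, the composite $s:=\bar h\circ\prod_i g_i\colon\prod_i A_i\to\prod_i C_i$ is defined. Commuting $\prod_i g_i$ past $d_A$ and $\prod_i h_i$ past $d_B$ gives both $d_C\circ s=\prod_i f_i$ and $s\circ d_A=\prod_i f_i$; thus $s$ is a contracting homotopy and $R\lim f=0$. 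Note that no projectivity of $\prod_i A_i$ intervenes, precisely because one descends $h$ rather than lifts $g$.

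\smallskip
For the realization I cannot build $G$ as an extension of $A$ and $C$ with their own bonds, since the graded pieces of $d_G$ would then be $d_A,d_C$, which are not bijective; instead I use a ``tail-product shift'' tower. For $\widehat A_i:=\prod_{k\ge i}A_k$ with the zero-insertion bond $(x_{i+1},x_{i+2},\dots)\mapsto(0,x_{i+1},x_{i+2},\dots)$, solving the difference equations by finite telescoping sums shows $d_{\widehat A}$ is bijective, so $\lim\widehat A=\lim^1\widehat A=0$; here $\widehat A$ is \emph{not} pro-trivial, which is exactly what makes such a target available without any countability assumption. I then set $G:=\widehat A$ and define $q\colon\widehat A\to C$ by $q_i(x):=s(\,\text{zero-padding of }x\,)_i$. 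Level-ness reduces, for $x\in\widehat A_{i+1}$ (whose zero-padding is unchanged by the bond), to $s(\tilde x)_i-c_i s(\tilde x)_{i+1}=(d_C s(\tilde x))_i=f_i(\tilde x_i)=f_i(0)=0$, which holds by $d_C s=\prod_i f_i$.

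\smallskip
The remaining, and \emph{hardest}, step is to produce a level map $p\colon A\to\widehat A$ with $qp=f$. A level map into the tail-product shift tower is a coherent system of homomorphisms $A_i\to A_k$ for $k\ge i$, i.e.\ data running ``up'' the tower against the bonds, so no canonical nonzero $p$ exists and the factorization through $B$ must be invoked. Concretely I would, for $a\in A_i$, apply $\im\prod_i g_i\subseteq\im d_B$ to the element concentrated in slot $i$ to obtain a $d_B$-preimage whose tail is a thread lifting $g_i(a)$, and assemble these tails into $p_i(a)$ by summing the resulting infinite series; the identity $s\circ d_A=\prod_i f_i$ is then what forces $qp=f$. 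The two genuine difficulties --- choosing the preimages linearly and coherently in $i$, and checking that the series defines an element of the product --- are exactly the ``summation of an infinite series'' phenomenon highlighted in the introduction, and I expect this step to absorb essentially all of the work. Once $p$ is constructed, $A\xrightarrow{p}\widehat A\xrightarrow{q}C$ is the desired factorization of $f$ through a tower $G=\widehat A$ with $\lim G=0$ and $\lim^1 G=0$.
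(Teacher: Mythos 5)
Your preparatory material is correct: the reformulation via the difference operators $d_X$, the construction of the homotopy $s=\bar h\circ\prod_i g_i$ with $d_C\circ s=s\circ d_A=\prod_i f_i$, the bijectivity of $d_{\widehat A}$ for the zero-insertion tail-product tower, and the levelness of $q$ all check out. But the proposal breaks at exactly the step you defer, and not merely because it is unfinished: \emph{no} level map $p\:A\to\widehat A$ with $qp=f$ exists in general, so $\widehat A$ is the wrong target. The point is that the bonding maps $\iota_i$ of $\widehat A$ are injective, and for any tower with injective bonds, levelness $\iota_i\circ p_{i+1}=p_i\circ a_i$ (where $a_i\:A_{i+1}\to A_i$ is the bond of $A$) forces $p_{i+1}$ to kill $\ker a_i$; hence any $f$ factoring through such a tower must have $f_{i+1}(\ker a_i)=0$ for all $i$, which the hypotheses of the theorem do not imply. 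Concretely, let $A_i=B_i=C_i=\Z$ with all bonding maps zero, and let $f=g=h$ be the levelwise identity. This is a level map, $\lim^1 A_i=0$ and $\lim B_i=0$, so the hypotheses hold (and the conclusion holds trivially with $G=A$, since $d_A$ is the identity); yet every level map $A\to\widehat A$ vanishes in all degrees $\ge 1$ (from $\iota_i\circ p_{i+1}=p_i\circ 0=0$ and injectivity of $\iota_i$), so $f$ admits no factorization through $\widehat A$ whatever the second map is. There is also a type mismatch in your sketch of $p$: the $d_B$-preimages of elements concentrated in slot $i$ live in $\prod_k B_k$, and their tails are threads of $B$, so they cannot be assembled into an element of $\widehat A_i=\prod_{k\ge i}A_k$; switching the target to the tail products of $B$ does not help, since that tower has injective bonds as well and moreover your definition of $q$ needs the domain of $s$, namely $\prod_k A_k$.

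The paper's proof takes a different and elementary route that manufactures $G$ out of $B$ and $C$ rather than out of products: its Lemma \ref{level-factorization}(a), applied to $h\:B\to C$, yields a tower $H$ of quotients of the $B_i$ with $\lim B_i\to\lim H_i$ surjective and $\lim H_i\to\lim C_i$ injective, so triviality of $\lim B_i\to\lim C_i$ forces $\lim H_i=0$; then Lemma \ref{level-factorization}(b), applied to the composition $A\to B\to H$, yields a tower of subgroups $G_i\subset H_i$ with $\lim G_i\to\lim H_i$ an isomorphism (hence $\lim G_i=0$), $\lim^1 G_i\to\lim^1 H_i$ injective and $\lim^1 A_i\to\lim^1 G_i$ surjective, so triviality of $\lim^1 A_i\to\lim^1 B_i$ forces $\lim^1 G_i=0$. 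Both lemmas rest on the device $L_i=\im(\lim_j K_j\to K_i)$, a subtower with surjective bonds; no infinite products or series appear. (The ``summation of an infinite series'' you invoke is the engine of Theorem \ref{bounded-colim}, not of this theorem.) Your homotopy $s$ is a genuinely weaker piece of data than the level factorization the theorem demands --- it is a statement about the totalized two-term complexes, insensitive to the tower structure --- and nothing in the proposal bridges that gap.
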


The proof of Theorem \ref{factorization-main}, and particularly of its ``half'' which is used in the proof
of Theorem \ref{bounded-colim}(c), does not seem to generalize to the non-abelian case 
(see Remark \ref{level-factorization2}).
However, the following questions remain open.

\begin{problem}
Is Theorem \ref{factorization-main} true in the non-abelian case?
\end{problem}

\begin{problem}
Is Theorem \ref{bounded-colim}(c) true in the non-abelian case?
\end{problem}

\begin{problem}
Let $G_{ij}$ be finitely generated groups forming the diagram of Theorem \ref{bounded-colim}.
Suppose that $\colim_j\lim_i G_{ij}$ and $\colim_j\lim^1_i G_{ij}$ are trivial.
Then is it true that for each $j$ there exists a $k>j$ such that the maps $\lim_i G_{ij}\to\lim_i G_{ik}$ and 
$\lim^1_i G_{ij}\to\lim^1_i G_{ik}$ are trivial?
\end{problem}

\subsection{Steenrod homology and homotopy}

A {\it local compactum} is a locally compact separable metrizable space.
A {\it compactum} is a compact metrizable space.

For a compactum $K$ we denote by $H_n(K)$ its $n$th Steenrod homology group and by $\pi_n(K,x)$ its $n$th Steenrod homotopy set 
(group for $n>0$), which is the group/set of strong shape morphisms from $(S^n,x)$ to $(K,x)$.

For a metrizable space $X$ its $n$th Steenrod--Sitnikov homology group $H_n(X)$ is the direct limit $\colim_K H_n(K)$ 
over all compact $K\subset X$, and its $n$th Steenrod--Sitnikov homotopy group/set $\pi_n(X,x)$ is the direct limit $\colim_K\pi_n(K,x)$ 
over all compact $K\subset X$ containing $x$.

\begin{maintheorem} \label{ind-colimit}
Let $X$ be a local compactum and let $X_0\subset X_1\subset\dots$ be compact subsets of 
$X$ such that $\bigcup_i X_i=X$ and each $X_i\subset\Int X_{i+1}$.

(a) $H_n(X)=0$ if and only if for each $j$ there exists a $k\ge j$ such that the inclusion induced map 
$H_n(X_j)\to H_n(X_k)$ is trivial.

(b) Let $x\in X_0$.
Suppose that $X$ is locally connected and either $n\le 1$ or $\pi_1(X,x)=1$.
Then $\pi_n(X,x)$ is trivial if and only if for each $j$ there exists a $k\ge j$ such that the inclusion induced map 
$\pi_n(X_j,x)\to\pi_n(X_k,x)$ is trivial.
\end{maintheorem}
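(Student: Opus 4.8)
The plan is to reduce both parts to Theorem~\ref{bounded-colim} through the Milnor exact sequences for Steenrod (co)homotopy. Since $X_i\subset\Int X_{i+1}$, the sequence $(X_i)$ is cofinal in the directed set of compact subsets of $X$, so $H_n(X)=\colim_i H_n(X_i)$ and $\pi_n(X,x)=\colim_i\pi_n(X_i,x)$. In both parts the implication $(\Leftarrow)$ is immediate, since a direct sequence in which every term eventually maps trivially has trivial colimit; so I would concentrate on $(\Rightarrow)$. The first step is to choose, compatibly with the inclusions $X_j\emb X_{j+1}$, inverse sequences of finite polyhedra $P_{0j}\leftarrow P_{1j}\leftarrow\cdots$ with $X_j=\lim_i P_{ij}$, so that the groups $H_m(P_{ij})$ (resp.\ $\pi_m(P_{ij})$) form a diagram of exactly the shape of Theorem~\ref{bounded-colim}.

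For part (a), the Milnor sequence for Steenrod homology supplies, naturally in $j$, a short exact sequence
\[0\to L_j\to H_n(X_j)\to M_j\to 0,\qquad L_j=\lim\nolimits^1_i H_{n+1}(P_{ij}),\quad M_j=\lim\nolimits_i H_n(P_{ij}).\]
As each $P_{ij}$ is a finite polyhedron, every $H_m(P_{ij})$ is finitely generated abelian, so the two edge families are precisely those governed by Theorem~\ref{bounded-colim}: $M_j$ by case (a) and $L_j$ by case (c). Applying the exact functor $\colim_j$ and using $\colim_j H_n(X_j)=H_n(X)=0$ gives $\colim_j L_j=0=\colim_j M_j$, whence Theorem~\ref{bounded-colim}(a),(c) make both $(L_j)$ and $(M_j)$ ind-trivial. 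A short diagram chase then upgrades this to ind-triviality of $(H_n(X_j))$: given $j$, choose $k_1$ with $M_j\to M_{k_1}$ trivial and $k_2\ge k_1$ with $L_{k_1}\to L_{k_2}$ trivial; an $\alpha\in H_n(X_j)$ then has trivial image in $M_{k_1}$, so its image in $H_n(X_{k_1})$ comes from $L_{k_1}$ and hence dies in $H_n(X_{k_2})$. This settles (a).

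Part (b) runs along the same lines with the Milnor sequence for Steenrod homotopy,
\[0\to\lim\nolimits^1_i\pi_{n+1}(P_{ij})\to\pi_n(X_j,x)\to\lim\nolimits_i\pi_n(P_{ij})\to 0,\]
but now the quality of the edge terms depends on the homotopy groups of the $P_{ij}$. For $n\ge1$ the $\lim^1$-term is abelian (as $\pi_{n+1}$ is), so case (c) still applies to it verbatim. The delicate term is $\lim_i\pi_n(P_{ij})$: to invoke case (a) I need $\pi_n(P_{ij})$ finitely generated, which by Serre's theorem holds as soon as the $P_{ij}$ are simply connected. This is exactly where local connectedness is used, and cannot be dropped: when $\pi_1(X,x)=1$, local connectedness is what allows the approximating polyhedra to be chosen simply connected (via the approximation results of Part~I), after which $\pi_n(P_{ij})$ is finitely generated abelian for $n\ge2$, case (a) applies, and the extension argument of the previous paragraph finishes this regime.

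The remaining case is $n\le1$, where the groups are genuinely nonabelian and the two sub-cases of the hypothesis are engaged differently. For $n=1$ the term $\lim_i\pi_1(P_{ij})$ has finitely generated $\pi_1(P_{ij})$, and I would use local connectedness to make each vertical tower $(\pi_1(P_{ij}))_i$ Mittag-Leffler, so that case (b) applies; the abelian $\lim^1_i\pi_2(P_{ij})$ is again handled by case (c). For $n=0$ the relevant $\lim^1$-term is the nonabelian $\lim^1_i\pi_1(P_{ij})$, to which case (c) no longer applies --- this is exactly what case (d) is designed for --- while local connectedness forces the components of $X_j$ to merge inside some $X_k$, making the $\pi_0$ (the $\lim$-term) ind-trivial directly. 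I expect the main obstacle to be precisely this passage from the strong-shape hypothesis ($\pi_1(X,x)=1$, or connectedness for $n=0$) to a genuinely simply connected (respectively Mittag-Leffler) approximating tower: the rest is the formal interplay of the Milnor sequence, exactness of $\colim_j$, and Theorem~\ref{bounded-colim}, but the finite-generation / Mittag-Leffler input is the whole point of the local-connectedness hypothesis, and the announced counterexample shows it is indispensable.
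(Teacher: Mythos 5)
Your part (a) is correct and is essentially the paper's own proof: the paper realizes $X$ as the limit of a scalable inverse sequence of locally compact polyhedra and takes $P_{ij}$ to be the compact subpolyhedron of the $i$th term spanned by the simplexes meeting $X_j$ (this is how one gets the strictly commutative diagram of towers you need); after that, the Milnor sequence, Theorem \ref{bounded-colim}(a) for the $\lim$ edge, Theorem \ref{bounded-colim}(c) for the $\lim^1$ edge, and your two-step chase are exactly what the paper does.

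Part (b) follows the right skeleton (Quigley sequence plus Theorem \ref{bounded-colim} plus the same chase), but two of your key steps fail as stated. First, for $n\ge 2$ you assert that local connectedness together with $\pi_1(X,x)=1$ ``allows the approximating polyhedra to be chosen simply connected''. No such approximation theorem exists in Part I or elsewhere, and the assertion is false: the hypothesis is about $X$, not about its compact subsets, so an individual $X_j$ can be a solenoid, and even after factoring the inclusions $X_j\emb X_{j+1}$ through locally connected compacta $Y_j$ (a step you also need, since compact subsets of a locally connected space need not be locally connected), the $Y_j$ only acquire Mittag-Leffler pro-$\pi_1$, not pro-trivial pro-$\pi_1$ --- think of an annulus inside $\R^2$. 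What the paper actually does is make the \emph{horizontal} maps trivial on $\pi_1$ and factor \emph{them} through simply connected polyhedra: Lemma \ref{continua} gives $\lim^1_i\pi_1(Q_{ij})=1$; Theorem \ref{bounded-colim}(b), applied to the finitely generated groups $\pi_1(Q_{ij})$ using $\colim_j\lim_i\pi_1(Q_{ij})=\breve\pi_1(X,x)=1$, gives ind-triviality of the \v Cech $\pi_1$'s; Lemma \ref{level-factorization3} and Lemma \ref{straightening} upgrade this to triviality of the level maps $\pi_1(Q_{ij})\to\pi_1(Q_{i,\,j+1})$; and only then does coning off the $1$-skeleta, $\hat Q_{ij}=Q_{ij}\cup C(\bar Q_{ij}^{(1)})$, produce simply connected polyhedra through which the maps $(Q_{ij},q_{ij})\to(Q_{i,\,j+1},q_{i,\,j+1})$ factor, so that Serre's theorem and Theorem \ref{bounded-colim}(a),(c) can be applied to $\pi_n(\hat Q_{ij})$ and $\pi_{n+1}(\hat Q_{ij})$ and composed back into the original towers. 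This chain is the heart of the proof and is absent from your outline. Second, for $n=0$ your appeal to Theorem \ref{bounded-colim}(d) for the nonabelian $\lim^1_i\pi_1(P_{ij})$ misreads (d): that case concerns $\lim_i\lim^1_k$ of the images $G_{ij}^{(k)}=\im(G_{kj}\to G_{ij})$, not $\lim^1_i$ of the groups themselves, and the nonabelian analogue of (c) is explicitly posed as an open problem in the paper. The paper's $n=0$ argument avoids the Quigley sequence entirely: by Lemma \ref{continua} the sets $\pi_0(Y_j,x)$ are finite (and the offending $\lim^1_i\pi_1$ term vanishes outright for locally connected compacta), so Proposition \ref{obvious} finishes. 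A minor bookkeeping point: the paper proves the statement with $\breve\pi_1(X,x)=1$ (Theorem \ref{ind-colimit0}) and deduces the stated version via Lemma \ref{Cech}; your use of exactness of $\colim_j$ handles this bridge implicitly, but it deserves a sentence.
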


When $X$ is not locally connected, (b) fails for $n=0$ by Example \ref{comb-example2}(b) and for each $n\ge 1$ by
Example \ref{unbounded-colim2}.
When $\pi_1(X,x)\ne 1$, (b) fails for each $n\ge 2$, already when $X$ is a polyhedron, by using
the Stallings--Bieri groups (see Example \ref{stallings}).

\begin{maintheorem} \label{ind-isomorphism2}
Let $f\:X\to Y$ be a map between local compacta, let 
$X_0\subset X_1\subset\dots$ and $Y_0\subset Y_1\subset\dots$ be compact subsets of 
$X$ and $Y$ respectively such that $\bigcup_i X_i=X$ and $\bigcup_i Y_i=Y$ and each 
$X_i\subset\Int X_{i+1}$ and each $Y_i\subset\Int Y_{i+1}$ and also each $f(X_i)\subset Y_i$.
Let $f_i\:X_i\to Y_i$ be the restriction of $f$.

(a) $f_*\:H_n(X)\to H_n(Y)$ is an isomorphism for all $n$ if and only if the induced maps $f_{i*}\:H_n(X_i)\to H_n(Y_i)$ 
represent an ind-isomorphism for all $n$.

(b) Suppose that $X$ and $Y$ are locally connected and $\pi_1(X,x)=\pi_1\big(Y,f(x)\big)=1$ for each $x\in X$.
Then $f_*\:\pi_n(X,x)\to\pi_n\big(Y,f(x)\big)$ is an isomorphism for all $n$ and all $x\in X$ if and only if 
for each $x\in X$ the induced maps $f_{i*}\:\pi_n(X_i,x)\to\pi_n\big(Y_i,f(x)\big)$, where $i\ge\min\{j\mid x\in X_j\}$, 
represent an ind-isomorphism for all $n$.
\end{maintheorem}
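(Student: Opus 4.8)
The plan is to reduce each part to the corresponding part of Theorem~\ref{ind-colimit} by replacing $f$ with an inclusion and passing to relative groups. Since $X_i\subset\Int X_{i+1}$ and $\bigcup_i X_i=X$, the sequence $(X_i)$ is cofinal among the compact subsets of $X$, so that $H_n(X)=\colim_i H_n(X_i)$ and $\pi_n(X,x)=\colim_i\pi_n(X_i,x)$, and similarly for $Y$; under these identifications $f_*=\colim_i f_{i*}$. The ``if'' direction in both (a) and (b) is then immediate: the colimit functor on direct sequences factors through the ind-category and therefore carries ind-isomorphisms to isomorphisms, so if the $f_{i*}$ represent an ind-isomorphism for all $n$, then $f_*$ is an isomorphism for all $n$. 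This uses no finiteness hypothesis.

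For the ``only if'' direction of (a) I would form the level mapping cylinders $M_{f_i}=\cyl(f_i)$, which nest compatibly because $f(X_i)\subset Y_i$ together with the interior conditions, so that $M_f:=\bigcup_i M_{f_i}$ is a local compactum containing $X$ as a closed subset and admitting $Y$ as a levelwise (hence proper) strong deformation retract. Then $H_n(M_f)\cong H_n(Y)$, the inclusion $X\hookrightarrow M_f$ induces $f_*$, and relative Steenrod--Sitnikov homology is again a colimit over the cofinal compact pairs $(M_{f_i},X_i)$, so that applying the exact functor $\colim_i$ to the long exact sequences of the pairs $(M_{f_i},X_i)$ yields the long exact sequence of $(M_f,X)$. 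Hence $f_*$ being an isomorphism for all $n$ forces $H_n(M_f,X)=0$ for all $n$. Now the proof of Theorem~\ref{ind-colimit}(a) applies verbatim to these relative groups, since the Milnor sequence expresses each $H_n(M_{f_i},X_i)$ through $\lim$ and $\lim^1$ of the homology of approximating finite polyhedral pairs, which are finitely generated abelian, so that Theorem~\ref{bounded-colim}(a),(c) governs the two resulting columns exactly as in the absolute case. Thus $(H_n(M_{f_i},X_i))_i$ is a zero object of the (abelian) ind-category for every $n$, and the level long exact sequences assemble into a long exact sequence there; with all relative terms vanishing, the five-lemma shows that $(f_{i*})_i$ is an ind-isomorphism for all $n$.

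Part (b) follows the same scheme with $\pi_n$ in place of $H_n$, using Theorem~\ref{ind-colimit}(b) and the homotopy long exact sequence of $(M_f,X)$. The hypotheses that $X$ and $Y$ are locally connected and that $\pi_1\equiv 1$ are what place these Steenrod--Sitnikov homotopy groups within the scope of Theorem~\ref{ind-colimit}(b); they also make the colimit pair $(M_f,X)$ simply connected, so that the relative groups $\pi_n(M_f,X)$ are abelian for $n\ge 2$ and the homotopy long exact sequence of ind-groups can be treated by an ind-category five-lemma in that range, with degrees $0$ and $1$ dispatched by connectedness and the triviality of $\pi_1$. Granting (via the relative form of Theorem~\ref{ind-colimit}(b)) that $f_*$ iso for all $n$ forces the relative ind-homotopy-groups to vanish, the five-lemma yields the ind-isomorphism as in (a).

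The formal diagram chase is not the difficulty; the real content is the input that $\colim$ reflects triviality for the ind-groups at hand, i.e.\ the relative forms of Theorem~\ref{ind-colimit}, which themselves rest on Theorem~\ref{bounded-colim}. I expect the main obstacle to be in (b): verifying that, under local connectedness and simple connectivity, the relative Steenrod--Sitnikov homotopy groups of the pairs $(M_{f_i},X_i)$ genuinely satisfy the hypotheses of Theorem~\ref{bounded-colim} (so that no nonabelian or non-Mittag-Leffler obstruction intervenes), and confirming that the homotopy long exact sequence remains exact, and amenable to the five-lemma, inside the ind-category.
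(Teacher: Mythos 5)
Your overall route coincides with the paper's: replace $f$ by the inclusion $X\subset MC(f)$, note that the hypothesis forces the relative groups of the pair $\big(MC(f),X\big)$ to vanish, apply the relative vanishing theorem (Theorem \ref{ind-colimit2}) to the compact pairs $\big(MC(f_i),X_i\big)$, and then recover the ind-isomorphism from the eventual triviality of the relative ind-groups via the exact sequences of pairs (the paper packages this last step as Lemma \ref{ind-isomorphism}). For part (a) your argument is essentially correct: for level morphisms of direct sequences of abelian groups, kernels and cokernels in the ind-category are computed levelwise, the ind-category of abelian groups is abelian, and so your five-lemma argument is a legitimate repackaging of the diagram chase the paper performs in Lemma \ref{ind-isomorphism}(a).

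Part (b), however, has a genuine gap, and it sits exactly where you waved your hands. First, degrees $0$ and $1$ cannot be ``dispatched by connectedness'': neither $X$, $Y$ nor the compacta $X_i$, $Y_i$ are assumed connected, so the assertion about $\pi_0$ (for all basepoints) is real content. Worse, in this theory an element of $\pi_0(X_i,x)$ need not be represented by a point of $X_i$, so pointed-set exactness is too weak to run any five-lemma at the bottom of the homotopy sequence --- trivial kernel does not give injectivity for pointed sets. The paper deals with this by an extra hypothesis in Lemma \ref{ind-isomorphism}(b) (elements of $\pi_0(X_{j+1},x)$ coming from $\pi_0(X_j,x)$ are represented by points), verified from local connectedness by factoring each inclusion $X_j\to X_{j+1}$ through a locally connected compactum, and by a chase that changes basepoints and uses relative $\pi_1$ at shifted indices. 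Second, simple connectivity holds only in the colimit: the individual pairs $\big(MC(f_i),X_i\big)$ are \emph{not} simply connected, so the levelwise relative $\pi_2$ is in general only a crossed module (not abelian) and the levelwise relative $\pi_1$ is not a group at all; hence ``an ind-category five-lemma in that range'' is not available as stated, and one must run an explicit chase valid for nonabelian groups and pointed sets, keeping track of which dimensions and which basepoints enter (the chase for dimension $n$ uses triviality of the relative ind-groups in dimensions $n$ and $n+1$, and the $\pi_0$ case uses $\pi_1$ at other basepoints). This quantifier structure --- a single index $k$ working for all $x\in X_i$, and different $n$'s interacting --- is essential: the paper explicitly poses as an open problem whether the theorem holds with $n$ or $x$ fixed, which rules out the purely degreewise, basepointwise formal argument you propose.
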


Let us note that $n$ and $x$ are fixed in Theorem \ref{ind-colimit}, but vary in Theorem \ref{ind-isomorphism2}.

\begin{problem} Is Theorem \ref{ind-isomorphism2} true with $n$ fixed? With $x$ fixed? With both $n$ and $x$ fixed?
\end{problem} 

\begin{remark} The proofs of Theorem \ref{ind-colimit}(a) and Theorem \ref{ind-isomorphism2}(a) work for any
homology theory (in the sense of the Eilenberg--MacLane axioms without the dimension axiom) that satisfies
Milnor's map excision axiom on compacta \cite{Mi1} (see also \cite{M00}) and has compact supports
(i.e.\ the natural homomorphism $\colim_K H_n(K)\to H_n(X)$, where $K$ runs over all compact subspaces of $X$, is an isomorphism.)
\end{remark}

\subsection{Fine shape}

For the case of compacta, there is a fully satisfactory Whitehead theorem in fine shape (which coincides with strong shape in this case):

\begin{theorem} \cite{M1} \label{comp-Wh}
Let $X$, $Y$ be connected finite-dimensional compacta.
If $f\:(X,x)\to (Y,y)$ is a strong shape morphism inducing bijections on the $\pi_i$, then $f$ is 
a strong shape equivalence.
\end{theorem}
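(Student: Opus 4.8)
The plan is to deduce the theorem from a pro-homotopy Whitehead argument, with a single algebraic lemma on towers of countable abelian groups doing the essential work. First I would resolve the compacta as inverse limits $X=\invlim_n P_n$ and $Y=\invlim_n Q_n$ of compact polyhedra and represent $f$ by a coherent map of towers; after a cofinal reindexing this may be taken to be a level map $f_n\:P_n\to Q_n$. Replacing each $f_n$ by the inclusion into its mapping cylinder gives a tower of pairs $(M_n,P_n)$ with $M_n\simeq Q_n$, and it suffices to prove that the relative pro-homotopy groups $\pro\pi_i(M_n,P_n)$ are pro-trivial for all $i$; a finite-dimensional pro-homotopy Whitehead argument, sketched at the end, then yields that $f$ is a fine shape equivalence.

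The second step translates the hypothesis. Writing $(M,X)=\invlim_n(M_n,P_n)$, the long exact sequence of the pair together with the assumed bijectivity of $f$ on every Steenrod homotopy group gives $\pi_i(M,X)=0$ for all $i$. Substituting this into the Milnor-type exact sequence
\[
1\to\derlim_n\pi_{i+1}(M_n,P_n)\to\pi_i(M,X)\to\invlim_n\pi_i(M_n,P_n)\to 1
\]
shows that $\invlim_n\pi_i(M_n,P_n)=0$ and $\derlim_n\pi_i(M_n,P_n)=0$ for every $i$ at once.

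The crux is to upgrade this simultaneous vanishing of $\invlim$ and $\derlim$ to genuine pro-triviality, and this is where the compactness of $X$ and $Y$ enters decisively. Since the $P_n$ and $Q_n$ are \emph{finite} polyhedra, all the groups $\pi_i(M_n,P_n)$ are countable, and for $i\ge 3$ abelian. For a single tower of countable abelian groups the vanishing of both $\invlim$ and $\derlim$ does force pro-triviality: $\derlim=0$ is equivalent to the Mittag-Leffler property, so the stable images form a tower with surjective bonding maps and inverse limit $0$; a surjective tower of countable groups has surjective projections from its limit, so every stable image vanishes, i.e.\ the tower is pro-trivial. This is precisely why the compact case is unconditional: the relevant algebra concerns a \emph{single} tower rather than a direct sequence of towers, so the phantom obstructions that force the finite-generation hypothesis of Theorem \ref{bounded-colim} never appear. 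The low-degree cases $i\le 2$, where the relative functors are not abelian groups, I would treat by the corresponding pointed-set and crossed-module versions of the same stable-image argument, using the given bijection on $\pi_1$.

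It follows that $f$ induces isomorphisms on all pro-homotopy groups, and the main remaining obstacle is the concluding pro-homotopical step. To prove that a level map of finite-dimensional towers inducing pro-isomorphisms on all pro-homotopy groups is a coherent homotopy equivalence, I would build a coherent homotopy inverse by induction up the skeleta: the successive obstructions lie in cohomology with coefficients in the (pro-trivial) relative homotopy groups, and it is exactly the finite-dimensionality of $X$ and $Y$, bounding the cohomological dimension, that guarantees this construction terminates.
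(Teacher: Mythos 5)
Your outline for $i\ge 2$ is sound and is essentially the standard route to this theorem: resolve $f$ as a level map of towers of compact polyhedra, pass to the mapping cylinders $M_n=MC(f_n)$, use the Quigley--Milnor exact sequences to convert the vanishing of the Steenrod groups $\pi_i(M,X)$ into $\lim_n\pi_i(M_n,P_n)=0$ together with $\lim^1_n\pi_i(M_n,P_n)=0$, and then use countability of homotopy groups of compact polyhedral pairs (Gray's theorem: for towers of countable groups, vanishing of $\lim^1$ implies Mittag--Leffler) plus ``Mittag--Leffler and trivial $\lim$ imply pro-triviality'' to conclude that the relative pro-homotopy groups are pro-trivial; finite-dimensionality then lets one compress $M$ into $P$ and build the inverse. (One remark on the last step: what is actually used is cell-by-cell compression of bonding maps rel $P$, made possible by pro-triviality of the relative groups, rather than obstruction cocycles in cohomology; but that is a presentational difference, not a gap.)

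The genuine gap is at $i=1$, exactly the step you dispose of in one sentence (``pointed-set and crossed-module versions of the same stable-image argument''). The set $\pi_1(M_n,P_n,p_n)$ is \emph{not} a group: it is only a pointed set acted on by $\pi_1(M_n)$, with the nontrivial point-inverses of $\pi_1(M_n,P_n)\to\pi_0(P_n)$ being the orbits of that action. Consequently ``$\lim^1_n\pi_1(M_n,P_n)$'' is undefined, the exact sequence yields nothing beyond $\lim_n\pi_1(M_n,P_n)=*$, and there is no analogue of Gray's theorem that produces the Mittag--Leffler property for a tower of pointed sets from any vanishing hypothesis. So your method cannot establish pro-triviality of pro-$\pi_1(M_n,P_n)$ --- and that pro-triviality is indispensable, since it is what allows the bonding maps to compress $1$-cells of $M_n$ rel endpoints into $P$, the very first step of the inductive construction of the inverse. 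This is not a pedantic point: the paper explicitly records, right after stating Theorem \ref{comp-Wh}, that the 1985 proof of this very theorem in \cite{Li} was erroneous for precisely this reason (its author treated $\pi_1(P,Q,x)$ as a group, formed its ``$\lim^1$'', and inferred Mittag--Leffler from its triviality), and that the salvageable part of that argument proves only an ``abelian'' version of the theorem. As written, your proposal reproduces that error: what it actually proves is the abelian version (as in \cite{Li} and \cite{Koy}), whereas the full theorem requires the substantially harder non-group analysis of pro-$\pi_1(M_n,P_n)$ carried out in \cite{M1}.
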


It should be noted that an erroneous proof of Theorem \ref{comp-Wh} appeared much earlier in \cite{Li} (whose author thought that $\pi_1(P,Q,x)$ 
is a group, considered ``$\lim^1$'' of such ``groups'', and took its triviality to imply the Mittag-Leffler condition).
The correct part of the arguments in \cite{Li} (see also \cite{Koy}) proves an ``abelian'' version of Theorem \ref{comp-Wh}.

While there are quite many ``Whitehead theorems in shape theory'' in the literature, most of them are in terms of pro-groups, 
rather than genuine groups, so of such results Theorem \ref{comp-Wh} appears to be by far the most advanced one.

All ``Whitehead theorems in shape theory'' that are known to the author are restricted to compacta, and it seems
highly unlikely that the literature contains any substantial result of this type beyond the compact case ---
because fine shape appeared only recently, and all alternative shape theories of non-compact spaces seem to be 
far less tractable.

In this paper we prove a ``non-compact Whitehead theorem in shape theory'', but with significant restrictions of 
two types: topological (local compactness) and algebraic (coming from the restrictions of Theorems \ref{bounded-colim}
and \ref{factorization-main}, which force us to deal only with finitely generated abelian groups).

\begin{maintheorem} \label{Wh-thm}
Let $X$ and $Y$ are locally connected finite-dimensional local compacta with trivial $\pi_0$ and $\pi_1$.
If $f\:(X,x)\to (Y,y)$ is a fine shape morphism inducing bijections on the $\pi_i$, then $f$ 
is a fine shape equivalence.
\end{maintheorem}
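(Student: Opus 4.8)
The plan is to reduce the theorem to the algebraic engine of Theorems \ref{bounded-colim} and \ref{factorization-main} by means of obstruction theory carried out simultaneously in the two directions of a fine shape resolution. I would begin by fixing compatible exhaustions $X_0\subset X_1\subset\dots$ and $Y_0\subset Y_1\subset\dots$ as in Theorem \ref{ind-isomorphism2}, so that $f$ is represented on each level by a map $f_i\:X_i\to Y_i$ of compacta, and by replacing each $f_i$ with a fine shape mapping cylinder so that $f$ becomes a cofibration-type inclusion $X\hookrightarrow Z$ with $Z$ fine shape equivalent to $Y$. A fine shape equivalence is then the same as a fine shape deformation retraction of $Z$ onto $X$, which I would build by induction. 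The first input is that $f$ induces bijections on all $\pi_n$; combined with local connectedness and the vanishing of $\pi_0$ and $\pi_1$, Theorem \ref{ind-isomorphism2}(b) upgrades this to the statement that the level maps $f_{i*}\:\pi_n(X_i,x)\to\pi_n\big(Y_i,f(x)\big)$ form an ind-isomorphism for every $n$ and every basepoint.

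Next I would pass to the relative invariants. Resolving each $X_i$ and $Y_i$ by an inverse sequence of nerves, local connectedness together with $\pi_1=1$ lets me take these nerves to be simply connected finite polyhedra; by Serre's theorem their homotopy groups are then finitely generated abelian in every degree $\ge 2$ and trivial in degrees $0$ and $1$. Consequently every relative Steenrod homotopy group of the pair fits into the usual short exact sequence with $\lim$ and $\lim^1$ of these finitely generated abelian pro-groups, so the finite-generation and abelian hypotheses of Theorem \ref{bounded-colim} are met. The long exact sequence of the pair, being exact after the filtered colimit over the exhaustion, shows that every relative Steenrod--Sitnikov homotopy group of the pair $(Z,X)$ has trivial colimit. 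Theorem \ref{bounded-colim}(a) applied to the $\lim$ summands and Theorem \ref{bounded-colim}(c) applied to the $\lim^1$ summands then convert this colimit-triviality into ind-triviality at the level of the exhaustion; that is, each relative obstruction group maps trivially into a later one.

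With ind-triviality of all relative homotopy groups in hand, I would run the obstruction-theoretic construction of the retraction skeleton by skeleton. Because the nerves are finite-dimensional the induction terminates after finitely many stages, and because the spaces are simply connected the obstruction to extending a deformation over the $n$-skeleton lives in a group assembled from the finitely generated abelian group $\pi_n$; the ind-triviality from the previous step makes each such obstruction vanish after passing to a later exhaustion index. Theorem \ref{factorization-main} is what makes the extension effective rather than merely abstract: a level morphism whose $\lim$ and $\lim^1$ are trivial factors through a tower with vanishing $\lim$ and $\lim^1$, and this factorization supplies the actual coherent homotopies needed to continue the induction. Carrying this out coherently yields a fine shape morphism $g\:Y\to X$ together with fine shape homotopies $gf\simeq\id_X$ and $fg\simeq\id_Y$.

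The main obstacle I anticipate is exactly this coherent bookkeeping across the double index $(i,j)$ of the resolution: the obstructions to building the retraction do not live in single groups but in $\lim$ and $\lim^1$ of towers that themselves vary along the exhaustion, and one must kill them in both directions at once while keeping the homotopies compatible from one skeletal stage to the next. This is precisely the situation for which Theorems \ref{bounded-colim} and \ref{factorization-main} were designed, and the delicate point is to arrange the construction so that at each stage the relevant groups are genuinely finitely generated abelian---forcing the simultaneous use of local connectedness, simple-connectivity, and finite-dimensionality---so that those theorems apply. The reduction from homotopy isomorphisms to the vanishing of relative invariants, via the exactness of the filtered colimit, and the final assembly of the pieces into an honest fine shape equivalence are the two places where the compact Whitehead theorem (Theorem \ref{comp-Wh}) and its abelian refinement serve as the local model that the ind-pro argument globalizes.
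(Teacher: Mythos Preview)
Your overall strategy matches the paper's: pass to a mapping cylinder, use Theorems~\ref{bounded-colim} and~\ref{factorization-main} to upgrade colimit-triviality of relative Steenrod homotopy groups to ind-triviality at the level of the two-parameter resolution, and then run obstruction theory skeleton by skeleton (terminating by finite-dimensionality) to build a fine shape inverse. This is exactly the architecture of the paper's Steps~2 and~3.

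There is, however, a genuine gap in your sketch. You write that ``local connectedness together with $\pi_1=1$ lets me take these nerves to be simply connected finite polyhedra.'' This is not correct as stated, and the point is not minor. The nerves $P_{ij}$ approximating a compactum $X_j$ (or the locally connected compactum $K_j$ one factors through) have no reason to be simply connected: the hypothesis $\pi_1(X,x)=1$ is a statement about the \emph{colimit} $\colim_j\check\pi_1(K_j,x)$, not about any individual $\check\pi_1(K_j,x)$, let alone about $\pi_1(P_{ij})$. What the paper actually does is show (via local connectedness, the Mittag-Leffler condition, Theorem~\ref{bounded-colim}(b), and Lemma~\ref{level-factorization3}) that after reindexing the \emph{horizontal maps} $\pi_1(P_{ij})\to\pi_1(P_{i,j+1})$ are trivial, and then explicitly attaches cones on $1$-skeleta to produce auxiliary polyhedra $\hat P_{ij}$, $\hat M_{ij}$ through which the bonding maps factor and which \emph{are} simply connected. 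Only at that stage does Serre's theorem apply to give finitely generated abelian $\pi_n$, so that Theorem~\ref{bounded-colim}(a) becomes available. Without this cone-attachment trick your appeal to Theorem~\ref{bounded-colim}(a) is unjustified, since the relevant groups need not be finitely generated.

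A second, smaller point: you begin by assuming $f$ is represented by genuine level maps $f_i\:X_i\to Y_i$. For a general fine shape morphism this representation is not free; the paper's Step~1 is devoted to constructing a two-parameter h-level representation via mapping telescopes and the straightening Lemma~\ref{straightening}. The paper notes that if $f$ \emph{is} a map this step simplifies considerably, so your assumption is a special case rather than an error, but you should flag it.
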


By Example \ref{comb-example2} the hypothesis that $X$ and $Y$ are locally connected cannot be dropped.

The hypothesis that $\pi_0$ and $\pi_1$ are trivial can be slightly relaxed (see Theorem \ref{Wh-thm1} and Lemma \ref{Cech}),
but the following remains a mystery to the author:

\begin{problem} Can the hypothesis ``$X$ and $Y$ have trivial $\pi_0$ and $\pi_1$'' be relaxed to ``$X$ and $Y$ are connected'' 
in Theorem \ref{Wh-thm}?
\end{problem}

For local compacta fine shape reduces to strong antishape \cite{M-I}*{Theorem \ref{fish:fish-sash}},
which can be said to belong to the domain of combinatorial homotopy.
So it appears that it should be possible to solve the following problem.

\begin{problem}
Find an algebraic characterization of fine shape equivalences among all fine shape morphisms
between two given finite-dimensional connected local compacta.
\end{problem}

\section{Examples}

\subsection{The comb space}

\begin{example} \label{comb-example2}
(a) Let $X\subset\R^2$ be the comb-and-flea set
$\{\frac1n\mid n\in\N\}\x[0,1]\cup (0,1]\x\{1\}\cup\{(0,0)\}$, and let $Y=X\cup E$,
where $E=\{-1\}\x[0,1]\cup\{0,1\}\x[-1,0]$.
It is not hard to see that $\tilde H_0(Y)\simeq\prod_{i\in\N}\Z\big/\bigoplus_{i\in\N}\Z$ 
(compare \cite{M-V}*{Example \ref{cor:comb1}}).
Hence $Y$ is not fine shape equivalent to a point.
On the other hand, writing $x=(0,0)$, it is easy to see that $\pi_n(Y,x)$ is trivial for all $n$ 
(compare \cite{M00}*{Remark \ref{book:ssh-ash2}}).

(b) Let $Z=Y\cup\{0\}\x[0,\frac12)$. 
Then $Z$ is locally compact and $(Z,x)$ is homotopy equivalent to $(Y,x)$.
Thus $\pi_n(Z,x)$ is trivial for all $n$, but $(Z,x)$ is not fine shape equivalent to a point.
\end{example}

\subsection{The polyhedral case}

\begin{example}\label{stallings}
For each $n\ge 2$ there exists a locally compact polyhedron $X$, which is the union of its compact subpolyhedra 
$X_0\subset X_1\subset\dots$ such that each $X_i\subset\Int X_{i+1}$, and a basepoint $x\in X_0$ such that 
$\pi_n(X,x)=0$ but the map $\pi_n(X_0,x)\to\pi_n(X_i,x)$ is not trivial for any $i$.

Indeed, let $G_n$ be the Stallings--Bieri group, that is, the kernel of the homomorphism $(F_2)^{n+1}\to\Z$
that sends all the generators to $1$, where $F_2$ be the free group of rank $2$.
It is well-known that there exists a countable CW complex $L$ of the type $K(G_n,1)$ with a finite $n$-skeleton, 
but there does not exists one with a finite $(n+1)$-skeleton (see \cite{Bri}).
Since $L$ has countably many cells, $L=\bigcup_{i=1}^\infty K_i$, where $K_0\subset K_1\subset\dots$ are its finite subcomplexes.
Let $x\in K_0$.
We may assume that $K_0$ contains the $n$-skeleton.
Then the map $\pi_n(K_0,x)\to\pi_n(K_i,x)$ is surjective for each $i$.
Suppose that it is trivial for some $i$.
Then $\pi_n(K_i,x)=0$.
Then we can obtain a $K(G_n,1)$ from $K_i$ by attaching cells of dimension $\ge n+2$.
Thus it has a finite $(n+1)$-skeleton, which is a contradiction.
Thus the map $\pi_n(K_0,x)\to\pi_n(K_i,x)$ is not trivial for any $i$.
Now let $X$ be the mapping telescope $K_{[0,\infty)}$, and let $X_i=K_{[0,i]}$.
Then $X$ is a local compactum and each $X_i\subset\Int X_{i+1}$.
However by the above the map $\pi_n(X_0,x)\to\pi_n(X_i,x)$ is not trivial for any $i$.
\end{example}

For completeness let us mention a version of Theorem \ref{ind-colimit} for separable 
(but not necessarily locally compact) polyhedra.

\begin{proposition} \label{ind-0} Let $L$ be a countable simplicial complex and let $K_i$ be the union of 
the first $i$ simplexes of $L$.

(a) $H_n(|L|)=0$ if and only if each $H_n(K_i)$ maps trivially to $H_n(K_j)$ for some $j\ge i$.

(b) Let $x\in K_1$ and suppose that either $n\le 1$ or $\pi_1(|L|,x)=1$.
Then $\pi_n(|L|,x)=1$ if and only if each $\pi_n(K_i,x)$ maps trivially to $H_n(K_j,x)$ for some $j\ge i$.
\end{proposition}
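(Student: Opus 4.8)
The plan is to reduce the Proposition to Theorem \ref{ind-colimit} by replacing the possibly non-locally-compact polyhedron $|L|$ by a genuinely locally compact polyhedron of the same ind-homotopy type. First I would record the cofinality that makes the statement meaningful: a compact subset of $|L|$ meets only finitely many open simplexes, hence lies in some $K_i$, so $\{K_i\}$ is cofinal among the compact subsets of $|L|$. By the very definition of Steenrod--Sitnikov homology and homotopy as colimits over compacta, this gives canonical identifications $H_n(|L|)=\colim_i H_n(K_i)$ and $\pi_n(|L|,x)=\colim_i\pi_n(K_i,x)$. The ``if'' direction of each biconditional is then purely formal: if $\pi_n(K_i,x)\to\pi_n(K_j,x)$ (resp.\ $H_n(K_i)\to H_n(K_j)$) is trivial for some $j\ge i$, then every element of the colimit already dies, so the colimit vanishes.

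For the ``only if'' direction I would form the mapping telescope $X$ of the inclusions $K_0\subset K_1\subset\dots$, with the projection $p\colon X\to[0,\infty)$ and exhaustion $X_i=p^{-1}\big([0,i+\tfrac12]\big)$. Then $X$ is a locally connected local compactum, each $X_i$ is a compactum, $\bigcup_i X_i=X$, and $X_i\subset\Int X_{i+1}$. The inclusion $j_i\colon K_i\hookrightarrow X_i$ as the top level $p^{-1}(i+\tfrac12)$ is a deformation retract, carrying the basepoint $x\in K_1$ consistently. Sliding along the $(i+1)$st mapping cylinder produces a homotopy showing that the square relating $j_i$, $j_{i+1}$, the inclusion $K_i\hookrightarrow K_{i+1}$, and the inclusion $X_i\hookrightarrow X_{i+1}$ commutes up to homotopy. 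Hence $\{j_i\}$ is a homotopy-commutative ladder of homotopy equivalences and induces an ind-isomorphism on $\{\pi_n(-,x)\}$ and on $\{H_n(-)\}$. In particular $\pi_n(X,x)=\pi_n(|L|,x)$ and $H_n(X)=H_n(|L|)$, while $\{\pi_n(K_i,x)\}$ is ind-trivial if and only if $\{\pi_n(X_i,x)\}$ is, and likewise for homology.

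Now I would simply invoke Theorem \ref{ind-colimit} for $X$. Part (a) is immediate. For (b), when $n\ge 2$ the hypothesis gives $\pi_1(X,x)=\pi_1(|L|,x)=1$, so the hypotheses of Theorem \ref{ind-colimit}(b) are met; translating its conclusion back through the ind-isomorphism of the previous paragraph yields the Proposition. I expect the main point to be precisely the verification that the telescope repairs local compactness while preserving the ind-homotopy type --- this is where the non-local-compactness of $|L|$, the very reason this Proposition is stated separately from Theorem \ref{ind-colimit}, is absorbed.

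Finally I would remark that part (a) and the case $n\le 1$ of (b) admit a direct proof bypassing the telescope: $H_n(K_i)$ is finitely generated abelian and $\pi_1(K_i,x)$ is finitely presented, so a vanishing colimit kills each (finite) generating set by some finite stage, forcing the corresponding bonding map to be trivial; and $\pi_0(K_i,x)$ is a finite pointed set, which falls under Proposition \ref{obvious}. The telescope reduction is essential only for $n\ge 2$, where $\pi_n(K_i,x)$ need not be finitely generated (it is merely finitely generated over $\Z[\pi_1(K_i,x)]$), so the elementary ``kill the generators'' argument is unavailable and one genuinely needs Theorem \ref{ind-colimit}.
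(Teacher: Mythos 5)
Your reduction to Theorem \ref{ind-colimit} via the mapping telescope is a genuinely different route from the paper's, and it is legitimate in principle: Theorem \ref{ind-colimit} is proved independently of Proposition \ref{ind-0}, so there is no circularity, and your telescope $X$ is indeed a locally connected local compactum with the required exhaustion, each $X_i$ deformation retracting onto a copy of $K_i$ compatibly (up to homotopy) with the bonding maps. The paper instead argues directly and elementarily: it identifies $H_n(|L|)\simeq\colim_i H_n(K_i)$ and $\pi_n(|L|,x)\simeq\colim_i\pi_n(K_i,x)$, and then uses that $H_n(K_i)$ and $\pi_1(K_i,x)$ are finitely generated (and $\pi_0(K_i,x)$ finite), so that a vanishing colimit kills a finite generating set, hence the whole group, at a finite stage; for $n\ge 2$ it first applies the case $n=1$ to find $j$ with $\pi_1(K_i,x)\to\pi_1(K_j,x)$ trivial, whence the bonding map, restricted to the component of $x$, factors through a simply connected pointed compact polyhedron whose $\pi_n$ \emph{is} finitely generated by Serre's theorem. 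In particular your closing remark is incorrect: the elementary ``kill the generators'' argument is not unavailable for $n\ge 2$ --- the Serre factorization trick is exactly what makes it work, and it is what keeps this proposition independent of the heavy algebra (Theorem \ref{bounded-colim}) on which Theorem \ref{ind-colimit} rests. What your route buys is uniformity (no case distinction, no Serre); what it costs is invoking the paper's hardest machinery for a statement intended as an easy side remark.

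There is, however, a genuine gap at your very first step, and it propagates through everything that follows. For the proposition to say anything not already contained in Theorem \ref{ind-colimit}, $|L|$ must be separable metrizable but not locally compact, i.e.\ $|L|$ must carry the metric topology: with the weak topology a countable complex that is not locally finite is not metrizable, hence outside the paper's framework for Steenrod--Sitnikov groups and fine shape (and if $L$ is locally finite, then $|L|$ is locally compact and the proposition is subsumed by Theorem \ref{ind-colimit}). In the metric topology your cofinality claim fails: compacta need not lie in finite subcomplexes. For example, if $L$ contains edges $[v,w_1],[v,w_2],\dots$ and $x_i=(1-\frac1i)v+\frac1i w_i$, then $d(x_i,v)\to 0$, so $\{v,x_1,x_2,\dots\}$ is compact yet meets infinitely many open simplexes, hence lies in no $K_i$. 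Consequently the identifications $H_n(|L|)\simeq\colim_i H_n(K_i)$ and $\pi_n(|L|,x)\simeq\colim_i\pi_n(K_i,x)$ --- which you need for both directions of the statement, for concluding $\pi_n(X,x)\simeq\pi_n(|L|,x)$ and $H_n(X)\simeq H_n(|L|)$, and for verifying the hypothesis $\pi_1(X,x)=1$ of Theorem \ref{ind-colimit}(b) --- do not come for free from cofinality. They are true, but establishing them is precisely the content of the paper's appeal to simplicial homology and simplicial approximation for metric polyhedra; your proposal contains no argument for them. Once these identifications are granted, the rest of your telescope argument (the deformation retractions of the partial telescopes onto their top levels, the homotopy commutative ladder, the induced ind-isomorphisms, and the application of Theorem \ref{ind-colimit}) is correct, modulo routine basepoint bookkeeping along the base ray.
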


\begin{proof}[Proof. (a)] By using simplicial homology it is easy to see that $H_n(|L|)\simeq\colim_i H_n(K_i)$.
This immediately implies the ``if'' assertion.
Conversely, suppose that $H_n(|L|)=0$ and let us fix an $i$.
Then every element of $H_n(K_i)$ lies in the kernel $G_j$ of the map $H_n(K_i)\to H_n(K_j)$ for some $j\ge i$.
Since $K_i$ is compact, $H_n(K_i)$ is finitely generated.
Then some $G_j$ contains all its generators, and hence coincides with it.
\end{proof}

\begin{proof}[(b)] By using simplicial approximation it is easy to see that $\pi_n(|L|,x)\simeq\colim_i\pi_n(K_i,x)$.
Now the proof of the case $n\le 1$ is similar to that of (a).
To prove the case $n\ge 2$ we additionally note that if the inclusion map $(K_i,x)\to (K_j,x)$ is trivial on $\pi_1$,
then its restriction to the component of $x$ factors through a simply-connected pointed compact polyhedron,
whose $\pi_n$ is finitely generated by Serre's theorem (see \cite{Sp}*{9.6.16}).
\end{proof}

\subsection{Non-abelian examples}

\begin{example} \label{unbounded-colim}
Theorem \ref{bounded-colim}(a) does not hold for non-abelian $G_{ij}$.
Indeed, let $C_0=\Z$ and for $n\ge 1$ let $C_n$ be the coset $(1+3+\dots+3^{n-1})+3^n\Z$ of the subgroup $3^n\Z$ of $\Z$.
Clearly $C_0\supset C_1\supset\dots$ and $\bigcap_n C_n=\emptyset$.
Let $E_n=\Z\but C_n$.
Thus $\emptyset=E_0\subset E_1\subset\dots$ and $\bigcup_n E_n=\Z$.
Let $X=\R\x\{0\}\cup\Z\x D^2$ and $X_n=\R\x\{0\}\cup\Z\x S^1\cup E_n\x D^2$, where $0\in S^1=\partial D^2$.
The left regular action of $\Z$ on $\R$ yields an action of $\Z$ on $X$, and $3^n\Z$ acts on $X_n$.
Let $P_{mn}=X_n/3^m\Z$ for $m\ge n$ and $P_{mn}=X/3^m\Z$ for $m<n$.
Let $G_{mn}=\pi_1(P_{mn},x_m)$, where $x_m$ is the orbit of $(0,0)\in X_0\subset X_n\subset X$ under $3^m\Z$.
Let us note that $G_{mn}$ is a finitely generated free group.
Let $\hat X_n=\lim\big(\dots\to P_{1n}\to P_{0n}\big)$ with the obvious bonding maps, and let 
$\Gamma_n=\lim\big(\dots\to G_{1n}\to G_{0n}\big)$.
Clearly $\pi_2(P_{mn})=0$ for all $m$ and $n$, so by the Quigley short exact sequence 
(see \cite{M1}*{Theorem 3.1(b)}) the Steenrod fundamental group $\pi_1(\hat X_n,x)\simeq\Gamma_n$,
where $x=(0,0)\in X_0\subset\hat X_0\subset\hat X_1\subset\dots$. 
Since the bonding maps $P_{m+1,\,n}\to P_{mn}$ are the $3$-fold coverings 
(and hence satisfy the homotopy lifting property) for $m\ge n$, the Steenrod fundamental group 
$\pi_1(\hat X_n,x)$ coincides with the usual fundamental group $\spi_1(\hat X_n,x)$.
Since $X_n$ is the path component of $x$ in $\hat X_n$, the latter coincides with
$\spi_1(X_n,x)$, which is the countably generated free group.
The inclusions $X_0\subset X_1\subset\dots$ induce maps $\Gamma_0\to\Gamma_1\to\dots$,
and by our construction of the $X_i$ each generator, and hence each element of each $\Gamma_i$ maps 
trivially to some $\Gamma_j$, but $\Gamma_i$ does not map trivially to any $\Gamma_j$.
\end{example}

\begin{remark} \label{unbounded-colim'}
Example \ref{unbounded-colim} still makes sense with $\pi_1$, $\spi_1$ 
replaced by $\pi_k,\spi_k$ for $k\ge 2$, as long as $(D^2,S^1)$ is replaced with 
$(D^{k+1},\partial D^{k+1})$.
(Let us note that although $\pi_{k+1}(S^k)$ may be nonzero, the 3-fold covering 
$P_{m+1,\,n}\to P_{mn}$, $m\ge n$, induces an isomorphism on $\pi_i$ for all $i\ge 2$, 
so in particular $\lim^1_m\pi_{k+1}(P_{mn})=0$.)
However, for the so modified polyhedron $P_{mn}$ the group $G_{mn}=\pi_k(P_{mn})$ will 
no longer be finitely generated for $k\ge 2$ (even though it will be a finitely generated 
module over the ring $\Z[\pi_1(S^1)]=\Z[x^{\pm1}]$).
Thus for $k\ge 2$ this is no longer an example showing that ``abelian'' cannot be dropped 
in Theorem \ref{bounded-colim}(a), but rather that ``finitely generated'' cannot be dropped in it.
\end{remark}

\begin{example} \label{unbounded-colim2}
For each $k\ge 1$ there exists a local compactum $T^{k+2}$ with a basepoint $x$ such that
the Steenrod--Sitnikov homotopy group $\pi_k(T^{k+2},x)=1$, but the ind-group formed
by the Steenrod homotopy groups $\pi_k(K,x)$, where $K$ runs over all compact subsets of 
$T^{k+2}$ containing $x$, is nontrivial.

Indeed, let $P_{mn}$, $\Gamma_n$, $\hat X_n$ and $X$ be as in Example \ref{unbounded-colim}, 
but with $(D^2,S^1)$ replaced by $(D^{k+1},\partial D^{k+1})$ when $k\ge 2$.
Let $P_{m,[0,\infty)}$ be the mapping telescope of the obvious inclusions 
$P_{m0}\subset P_{m1}\subset\dots$ (which stabilize at $P_{mm}=P_{m,m+1}=\dots=X/3^m\Z$).
Clearly $T^{k+2}\bydef \lim(\dots\to P_{1,[0,\infty)}\to P_{0,[0,\infty)})$ is the mapping telescope of 
the inclusions $\hat X_0\subset\hat X_1\subset\dots$.
Let $T_n$ be the finite mapping telescope of the inclusions $\hat X_0\subset\dots\subset\hat X_n$.
Since $T_n$ deformation retracts onto $\hat X_n$, the Steenrod--Sitnikov homotopy group 
$\pi_i(T^{k+2},x_0)\simeq\colim\pi_i(T_n,x_0)\simeq\colim\pi_i(\hat X_n,x_n)$.
Hence $\pi_k(T^{k+2},x_0)\simeq\colim\Gamma_n=1$, but the homomorphism $\pi_k(T_0,x_0)\to\pi_k(T_n,x_0)$ 
is nontrivial for each $n$.
\end{example}

\begin{example} Let $T^{k+2}$ be the local compactum of Example \ref{unbounded-colim2} and let $\Sigma$ 
be the $3$-adic solenoid.
Then the inclusion of $\Sigma=\lim\big(\dots\to\R/9\Z\to\R/3\Z\to\R/\Z\big)$ in $\hat X_0\subset T^{k+2}$
clearly induces isomorphisms $\pi_i(\Sigma)\xr{\simeq}\pi_i(T^{k+2})$ on all Steenrod--Sitnikov homotopy groups, 
but is not a fine shape equivalence (since the ind-equivalence class of the 
ind-group $\pi_k(T_0)\to\pi_k(T_1)\to\dots$ is nontrivial and is an invariant of fine shape).
\end{example}

\begin{example}
Let us note a variant of Example \ref{unbounded-colim}.
Let $F$ be the free group $\left<x,y\mid\,\right>$ and $F'$ be its commutator subgroup.
Let us note that $F'$, being the fundamental group of the free abelian cover 
$L\bydef \R\x\Z\cup\Z\x\R\subset\R^2$ of $S^1\vee S^1$ (with basepoint at $(0,0)$), 
is freely generated by the set $\{[x,y]^{x^iy^j}\mid i,j\in\Z\}$, where $g^h=h^{-1}gh$.
Let $H_m$ be the subgroup $3^m\Z\oplus 3^m\Z$ of $F/F'\simeq\Z\oplus\Z$, 
and let $G_m$ be its preimage in $F$.
Writing $X$ for the universal cover of $S^1\vee S^1$, we have $X/G_m=(X/F')/(G_m/F')=L/H_m$.
Thus $G_m$ is the fundamental group of $L/H_m$, and in particular it is finitely generated.
More specifically, $G_0=F$ and $G_m$ for $m>0$ is freely generated by the set 
$\{x^{3^m},y^{3^m}\}\cup S_m$, where 
$S_m=\{[x,y]^{x^iy^j}\mid i,j\in\Z\cap[\frac{1-3^{m-1}}2,\,\frac{3^{m-1}-1}2]\}$.
Since $\bigcap_m G_m$ is the preimage of $\bigcap_m H_m=1$, it equals $F'$.

On the other hand, let $T_n=\{[x,y]^{x^iy^j}\mid i,j\in E_n\}$, where $E_n$ is as in 
Example \ref{unbounded-colim}.
Let $K_0=1$ and let $K_n$ for $n>0$ be the normal closure of $T_n$ in $F'$.
Then $\Gamma_n\bydef F'/K_n$ is the fundamental group of $P_n\bydef L\cup C_n\x C_n$, where 
$C_n=[\frac{1-3^{n-1}}2,\frac{3^{n-1}-1}2]+3^n\Z$, the union of cosets of the subgroup $3^n\Z$ of $\R$.
Clearly $K_0\subsetneq K_1\subsetneq\dots$ and $\bigcup_n K_n=F'$.
Hence $\colim\Gamma_n=1$, but no map $\Gamma_0\to\Gamma_n$ is trivial.

Finally, let us note that $T_n\subset F'\subset G_m$.
Let $K_{m0}=1$ and let $K_{mn}$ for $n>1$ be the normal closure of $T_n$ in $G_m$.
Let us note that $H_n$ acts freely on $P_n$.
Then $G_{mn}\bydef G_m/K_{mn}$ is the fundamental group of $P_{mn}$, where 
$P_{mn}=P_n/H_m$ for $m\ge n$ and $P_{mn}=\R^2/H_m$ for $m<n$.
Now $\lim\big(\dots\to G_{1n}\to G_{0n}\big)\simeq\Gamma_n$ similarly to 
Example \ref{unbounded-colim}.
\end{example}

\section{Preliminaries}

\subsection{Relative fine shape}

Given a metrizable space $X_0$ and its closed subsets $X_1\supset\dots\supset X_r$, there exist
an absolute retract $M_0$ and its closed subsets $M_1\supset\dots\supset M_r$ such that each $M_i$ 
is an absolute retract containing $X_i$ as a closed Z-set.
For instance, as long as $X_r\ne pt$, one can take each $M_i$ to be the space of probability measures $P(X_i)$
(see \cite{M00}*{Theorem \ref{book:prob}}).
Alternatively, one may construct the $M_i$ using the Fr\'echet--Wojdyslawski construction 
(see \cite{M-I}*{proof of Lemma \ref{fish:ar-embedding}}).
When $X_0$ is separable, there is also a more combinatorial construction, based on the extended mapping telescope
(see \cite{M-I}*{proof of Lemma \ref{fish:fineZ}}).
In the case where $X_1$ is compact, a number of further constructions is discussed in 
\cite{M-I}*{Remark \ref{fish:fineZ'}}.

Given a metrizable space $Y_0$ and its closed subsets $Y_1\supset\dots\supset Y_r$, there similarly exist
an absolute retract $N_0$ and its closed subsets $N_1\supset\dots\supset N_r$ such that each $N_i$ 
is an absolute containing $Y_i$ as a closed Z-set.

We recall from ``Fine shape I'' \cite{M-I} that a map $f\:M_0\but X_0\to N_0\but Y_0$ is called {\it $X_0$-$Y_0$-approaching} 
if it sends every sequence of points which has a cluster point in $X_0$ into a sequence which has a cluster point in $Y_0$.
More revealing equivalent forms of this definition are discussed in \cite{M-I}.
Clearly, if $f$ is $X$-$Y$-approaching and sends $M_i\but X_i$ into $N_i\but Y_i$, then its restriction
$f_i\:M_i\but X_i\to N_i\but Y_i$ is $X_i$-$Y_i$-approaching.

Now {\it fine shape classes} $(X_0,\dots,X_r)\to(Y_0,\dots,Y_r)$ are the classes of $X_0$-$Y_0$-approaching maps 
$(M_0\but X_0,\dots,M_r\but X_r)\to (N_0\but Y_0,\dots,N_r\but Y_r)$ up to $X_0$-$Y_0$-approaching homotopies%
\footnote{That is, homotopies that are $(X_0\x I)$-$Y_0$-approaching as maps $(M_0\but X_0)\x I\to N_0\but Y_0$.}
through maps $(M_0\but X_0,\dots,M_r\but X_r)\to (N_0\but Y_0,\dots,N_r\but Y_r)$.
Everything in \cite{M-I}*{Lemma \ref{fish:Milnor} and \S\ref{fish:fsm-section}} generalizes straightforwardly to 
the relative case, and in particular we obtain that fine shape classes $(X_0,\dots,X_r)\to(Y_0,\dots,Y_r)$, 
as defined above, do not depend on the choice of $(M_0,\dots,M_r)$ and $(N_0,\dots,N_r)$.

\subsection{Relative Steenrod--Sitnikov homotopy groups}

Let $X$ be a pointed metrizable space, $A$ be its closed subspace containing the basepoint $x$, 
and suppose that either $n>0$ or $n=0$ and $A=\{x\}$.
The pointed set $\pi_n(X,A,x)$ of fine shape classes $(I^n,\partial I^n,I^{n-1})\to (X,A,\{x\})$
is easily seen to be the direct limit $\colim\pi_n(K, K\cap A,\,x)$ over all compact $K\subset X$ 
containing $x$.
(The case where $X$ is compact is discussed in \cite{M1}.)
When $A\ne\{x\}$, we define $\pi_0(X,A)$ to be the quotient of $\pi_0(X)$ by the image of $\pi_0(A)$.
Since $\colim$ is an exact functor, $\pi_0(X,A,x)$ is again the direct limit $\colim\pi_0(K, K\cap A,\,x)$ 
over all compact $K\subset X$ containing $x$.
Clearly $\pi_i(X,x)\bydef \pi_i(X,\{x\},x)$ is a group for $i\ge 1$, which is abelian for $i\ge 2$, and 
$\pi_i(X,A,x)$ is a group for $i\ge 2$, which is abelian for $i\ge 3$.
We call these the {\it Steenrod--Sitnikov homotopy groups.}
We will sometimes omit the basepoint from the notation.
There is an action of $\pi_1(A)$ on $\pi_n(X)$ for $n\ge 1$ and on $\pi_n(X,A)$
for $n\ge 2$, and an action of $\pi_1(X)$ on $\pi_1(X,A)$.

There is an exact sequence of pointed sets
\[\dots\to\pi_2(X,A)\to\pi_1(A)\to\pi_1(X)\to\pi_1(X,A)\to\pi_0(A)\to\pi_0(X)\to\pi_0(X,A)\to *,\]
whose maps to the left of $\pi_1(X)$ (resp.\ $\pi_2(X)$) are homomorphisms
of groups (resp.\ right $\Z\pi_1(A)$-modules).
In addition, $\pi_2(X)\to\pi_2(X,A)$ is also $\pi_1(A)$-equivariant,
$\partial\:\pi_2(X,A)\to\pi_1(A)$ is a crossed module%
\footnote{That is, $\partial(s\cdot g)=g^{-1}(\partial s)g$ for $g\in\pi_1(A)$,
$s\in\pi_2(X,A)$ and $s^{-1}ts=t\cdot(\partial s)$ for $s,t\in\pi_2(X,A)$.}%
, $\pi_1(X)\to\pi_1(X,A)$ is $\pi_1(X)$-equivariant with respect to the right
regular action on $\pi_1(X)$, and the non-trivial point-inverses of
$\pi_1(X,A)\to\pi_0(A)$ coincide with the orbits of $\pi_1(X)$.

\subsection{Base-ray preserving approaching homotopy classes}

In general, not every element of $\pi_0(X)$ is represented by a map $pt\to X$ 
(see \cite{M1}*{Theorem 8.4 or Example 8.6}).
Because of this, it is sometimes convenient to replace basepoints by base rays.

Let $X$ be a metrizable space and let $M$ be an absolute retract containing $X$
as a closed Z-set.
Let $r\:[0,\infty)\to M\but X$ be an $\{\infty\}$-$X$-approaching map; in other words,
any map such that the closure $\overline{r\big([0,\infty)\big)}=r\big([0,\infty)\big)\cup K$
for some non-empty compact $K\subset X$. 
For a polyhedron $P$ with a basepoint $x$, a $P$-$X$-approaching map $P\x[0,\infty)\to M\but X$ 
will be called {\it $r$-pointed} if its restriction $\{x\}\x[0,\infty)\to M\but X$
coincides with $r$.

We write $\Pi_n(M\but X,\,r)$ for the set of classes of $r$-pointed $S^n$-$X$-approaching maps
$S^n\x[0,\infty)\to M\but X$ up to $r$-pointed $S^n$-$X$-approaching homotopy.
If $A$ is a closed subset of $X$, contained as a Z-set in a closed absolute retract $N\subset M$,
and $r\:[0,\infty)\to N\but A$ is an $\{\infty\}$-$A$-approaching map, then we write
$\Pi_n(M\but X,\,N\but A,\,r)$ for the set of classes of $r$-pointed $I^n$-$X$-approaching maps
$(I^n,\partial I^n)\x[0,\infty)\to (M\but X,\,N\but A)$ up to $r$-pointed $I^n$-$X$-approaching homotopy
(the basepoint of $I^n$ is assumed to lie in $\partial I^n$).

All that has been said about $\pi_n(X,x)$ and $\pi_n(X,A,x)$ generalizes straightforwardly to 
$\Pi_n(M\but X,\,r)$ and $\Pi_n(M\but X,\,N\but A,\,r)$.
In particular, $\Pi_n(M\but X,\,r)$ is a group for $i\ge 1$, which is abelian for $i\ge 2$, and 
$\Pi_n(M\but X,\,N\but A,\,r)$ is a group for $i\ge 2$, which is abelian for $i\ge 3$.

\subsection{\v Cech homotopy groups}

For a compactum $K$ with a basepoint $x$, the {\it \v Cech homotopy set} $\check\pi_n(K,x)$ is the inverse limit 
$\lim\pi_n(P_i,x_i)$, where $\dots\to (P_1,x_1)\to (P_0,x_0)$ is an inverse sequence of pointed compact polyhedra 
with $\lim (P_i,x_i)=(K,x)$.
It is a group for $n\ge 1$.
It is a well defined shape invariant of $(K,x)$ (see \cite{M00}*{Theorem \ref{book:fun-extension}}), and hence also
a strong shape invariant of $(K,x)$.

\begin{lemma} \label{continua}
Let $X$ be a locally connected compactum and $x\in X$.
Then $\check\pi_0(X,x)$ is finite and the map $\pi_0(X,x)\to\check\pi_0(X,x)$ is a bijection.
\end{lemma}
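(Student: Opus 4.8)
The plan is to deduce the statement from the Quigley short exact sequence in the case $n=0$, using local connectedness only to control $\pi_0$ and the fundamental pro-group. Write $(X,x)=\lim(P_i,x_i)$ for an inverse sequence of pointed compact polyhedra, so that by definition $\check\pi_0(X,x)=\lim\pi_0(P_i,x_i)$; since $\check\pi_0$ is a shape invariant I am free to pick a convenient sequence. The Quigley sequence \cite{M1}*{Theorem 3.1(b)} in the case $n=0$ takes the form
\[*\to\lim{}^1\pi_1(P_i,x_i)\to\pi_0(X,x)\to\check\pi_0(X,x)\to *,\]
so the natural map $\pi_0(X,x)\to\check\pi_0(X,x)$ is always surjective, and its fibres are governed by $\lim^1\pi_1(P_i,x_i)$; in particular it is a bijection once this $\lim^1$ is trivial. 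Thus the lemma reduces to two claims: (i) $\check\pi_0(X,x)$ is finite, and (ii) $\lim^1\pi_1(P_i,x_i)=*$ for a suitable choice of sequence.

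For (i) I would use local connectedness to choose the $P_i$ well. Since $X$ is compact and locally connected its components are open, hence finite in number, say $X^{(1)},\dots,X^{(m)}$, each a Peano continuum. Represent $X$ by the nerves $P_i=N(\mathcal U_i)$ of a cofinal sequence of finite covers $\mathcal U_i$ by connected open sets (available by local connectedness). A connected member of $\mathcal U_i$ lies in a single component $X^{(k)}$, while the members lying in a fixed $X^{(k)}$ form a connected subcomplex of the nerve because $X^{(k)}$ is connected; hence $\pi_0(P_i)$ is in canonical bijection with $\{X^{(1)},\dots,X^{(m)}\}$ and each bonding map induces the identity under this identification. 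Therefore $\check\pi_0(X,x)=\lim\pi_0(P_i,x_i)$ has exactly $m$ elements.

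For (ii), note that $\pi_1(P_i,x_i)$ depends only on the component of $x_i$, which converges to the component $X^{(0)}$ of $x$, again a Peano continuum. Here I invoke the classical fact that Peano continua (indeed all locally connected compacta) are pointed $1$-movable: by definition this means the fundamental pro-group $\{\pi_1(P_i,x_i)\}$ is a movable tower. A movable tower of groups satisfies the Mittag--Leffler condition --- the factorizing homomorphisms force the images $\mathrm{im}\big(\pi_1(P_j,x_j)\to\pi_1(P_i,x_i)\big)$ to stabilise in $j$ --- and a Mittag--Leffler tower of groups has trivial $\lim^1$. Hence $\lim^1\pi_1(P_i,x_i)=*$. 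Combining with (i) and the Quigley sequence, $\pi_0(X,x)\to\check\pi_0(X,x)$ is a bijection onto a finite set, as required.

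The step I expect to be the main obstacle is (ii): the finiteness in (i) and the nerve bookkeeping are essentially formal once the nice inverse sequence is in place, whereas the vanishing of $\lim^1\pi_1$ rests on the non-trivial movability of Peano continua. One must also be slightly careful that the pointed-set Quigley sequence really yields injectivity of the whole map when $\lim^1\pi_1=*$, and not merely triviality of the fibre over the basepoint; this is where I would verify that the fibres are uniformly the orbits of the $\lim^1$ term, so that its triviality forces every fibre to be a singleton.
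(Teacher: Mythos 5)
Your route is genuinely different from the paper's, which is essentially a proof by citation: the paper quotes \cite{M1}*{Theorem 6.1(c,g)} for the bijection $\pi_0(X,x)\to\check\pi_0(X,x)$ and for discreteness of $\check\pi_0(X,x)$ in the inverse-limit topology, and then \cite{M1}*{Lemma 3.4(b)} to convert discreteness into finiteness. Your part (i) is correct and in fact more self-contained than the paper's finiteness argument: with a nerve expansion coming from finite covers by connected open sets, $\pi_0(P_i,x_i)$ is canonically the finite set of components of $X$ with identity bonding maps, so $\check\pi_0(X,x)$ is finite. Your algebraic ingredients in (ii) are also sound: pointed $1$-movability of Peano continua is classical, movable towers of groups satisfy Mittag--Leffler, and Mittag--Leffler towers have trivial $\lim^1$.

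The genuine gap is exactly at the point you flag in your last paragraph, and the repair you propose there is false as stated. The Quigley sequence based at $x$ is an exact sequence of \emph{pointed sets}, so it identifies only the fibre of $\pi_0(X,x)\to\check\pi_0(X,x)$ over the basepoint of $\check\pi_0(X,x)$ with $\lim^1\pi_1(P_i,x_i)$; the other fibres are \emph{not} orbits of this single term, uniformly or otherwise. Concretely, let $X=\{pt\}\sqcup\Sigma$, where $\Sigma$ is the $3$-adic solenoid, with basepoint the isolated point: then $\lim^1\pi_1(P_i,x_i)$ is trivial (the fundamental groups at $x_i$ are trivial), yet the fibre over the class of $\Sigma$ in $\check\pi_0(X,x)$ is uncountable, being $\lim^1\big(\dots\xr{3}\Z\xr{3}\Z\big)$. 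So no verification of the form ``triviality of $\lim^1\pi_1(P_i,x_i)$ forces every fibre to be a singleton'' can succeed; this $X$ is of course not locally connected, which is why the lemma itself survives. The correct repair is to run the Quigley sequence once \emph{per fibre}, based at a base ray $r$ representing the given element of $\check\pi_0(X,x)$ --- which \cite{M1}*{Theorem 3.1(b)} permits, since the base ray there is arbitrary --- and to kill $\lim^1\pi_1(P_i,r)$ for each such $r$. For that you must apply your $1$-movability/Mittag--Leffler argument to \emph{every} component of $X$, not just the component of $x$; this works because each component is a Peano continuum and, by your own nerve computation, every element of $\check\pi_0(X,x)$ is represented by an actual point of $X$, so such base rays exist and lie over Peano continua. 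This is precisely the subtlety recorded in the paper's footnote to this very lemma: for maps of pointed sets, trivial kernel does not imply injectivity, and the fix in \cite{M1} is the same re-basing of the Quigley sequence at an arbitrary base ray. With that per-fibre modification, your proof goes through.
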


\begin{proof}
Since $X$ is locally connected, the map $\pi_0(X,x)\to\check\pi_0(X,x)$ is a bijection and $\check\pi_0(X,x)$ 
is discrete when endowed with the topology of the inverse limit of discrete sets \cite{M1}*{Theorem 6.1(c,g)}.%
\footnote{There is a minor gap in \cite{M1}*{proofs of Lemma 6.2(1,3) and Theorem 6.1(d,h,g) in the case $n=0$} 
as it was overlooked there that for a map between pointed sets trivial kernel does not imply injectivity.
This gap can be filled in by trivial modifications of the arguments.
In particular, when \cite{M1}*{Theorem 3.1(b)} is applied in the proof of \cite{M1}*{Lemma 6.2(1)}, the base ray 
in the former can be arbitrary, and has nothing to do with the base point in the latter.}
If $\dots\to (P_1,x_1)\to (P_0,x_0)$ is an inverse sequence of pointed compact polyhedra with $\lim (P_i,x_i)=(X,x)$,
then $\check\pi_0(X,x)$ is isomorphic to $\lim\pi_0(P_i,x_i)$.
Since it is discrete, it injects in $\pi_0(P_k,x_k)$ for some $k$ \cite{M1}*{Lemma 3.4(b)}.
Hence it is finite.
\end{proof}

For a metrizable space $X$ with a basepoint $x$ let $\breve\pi_n(X,x)$ be the direct limit of the \v Cech homotopy set 
$\check\pi_n(K,x)$ over all compact subsets $K$ of $X$ containing the basepoint.

On the other hand, by $\spi_n(X,x)$ we will denote the {\it usual} homotopy set $[(S^n,pt),\,(X,x)]$.
Both $\breve\pi_n(X,x)$ and $\spi_n(X,x)$ are antishape invariants of $(X,x)$ (see \cite{M00}*{Theorem \ref{book:sit-extension}}), 
and hence also fine shape invariants of $(X,x)$.

\begin{lemma} \label{Cech}
Let $X$ be a metrizable space with a basepoint $x$.

(a) If $\pi_n(X,x)=1$, then $\breve\pi_n(X,x)=1$.

(b) If $X$ is a locally $(n-1)$-connected local compactum and each compactum $L\subset X$ containing the basepoint 
lies in a compactum $M\subset X$ such that the map $\spi_n(L,x)\to\spi_n(M,x)$ is trivial, 
then $\breve\pi_n(X,x)=1$.
\end{lemma}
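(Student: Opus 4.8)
For both parts I use that, by definition, $\breve\pi_n(X,x)=\colim_K\check\pi_n(K,x)$, the colimit being taken over the compacta $K\subset X$ containing $x$, and that $\colim$ is an exact functor.

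\emph{Part (a).} If a compactum $K$ is presented as $\lim(\dots\to P_1\to P_0)$, the Quigley short exact sequence \cite{M1}*{Theorem 3.1(b)} reads
\[*\to\derlim_i\pi_{n+1}(P_i)\to\pi_n(K,x)\to\check\pi_n(K,x)\to *,\]
so the canonical map from the Steenrod homotopy set $\pi_n(K,x)$ to $\check\pi_n(K,x)=\lim_i\pi_n(P_i,x_i)$ is surjective, and it is natural with respect to inclusions of compacta. Applying the exact functor $\colim_K$ to this surjection I get a surjection $\pi_n(X,x)=\colim_K\pi_n(K,x)\to\colim_K\check\pi_n(K,x)=\breve\pi_n(X,x)$. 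Hence $\pi_n(X,x)=1$ forces $\breve\pi_n(X,x)=1$. No connectivity hypothesis is needed for (a).

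\emph{Part (b).} Here I must show $\colim_K\check\pi_n(K,x)=1$, i.e.\ that for every compactum $K\ni x$ and every $\alpha\in\check\pi_n(K,x)$ there is a compactum $L\supset K$ with $\alpha\mapsto *$ in $\check\pi_n(L,x)$. The plan is to compare the ind-pointed set $\{\check\pi_n(K,x)\}_K$ with $\{\spi_n(K,x)\}_K$, on which the hypothesis is precisely a statement of ind-triviality, via the natural maps $\phi_K\:\spi_n(K,x)\to\check\pi_n(K,x)$ that send the class of a genuine map $S^n\to K$ to the compatible family of its composites with the projections $K\to P_i$. The heart of the argument is the following \emph{realization claim}: since $X$ is locally $(n-1)$-connected, for each compactum $K\ni x$ there is a compactum $K'\supset K$ in $X$ such that the image of $\check\pi_n(K,x)$ in $\check\pi_n(K',x)$ lies in the image of $\phi_{K'}$; that is, every \v Cech class of $K$, after enlarging $K$ to $K'$, becomes the \v Cech class of an honest map $(S^n,pt)\to(K',x)$. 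Granting this, (b) follows quickly: given $\alpha\in\check\pi_n(K,x)$, enlarge to $K'$ so that the image of $\alpha$ equals $\phi_{K'}(\beta)$ for some $\beta\in\spi_n(K',x)$; by the hypothesis choose $L\supset K'$ with $\spi_n(K',x)\to\spi_n(L,x)$ trivial, so $\beta$ dies in $\spi_n(L,x)$; then by naturality of $\phi$ the image of $\alpha$ in $\check\pi_n(L,x)$ equals $\phi_L$ of the image of $\beta$, which is $*$.

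It remains to prove the realization claim, and this is the step I expect to be the main obstacle. I would present $\check\pi_n(K,x)$ through the nerves $N_i$ of a cofinal decreasing sequence of finite open covers of $K$ by sets of $X$, so that $\alpha=(\alpha_i)$ with each $\alpha_i\in\pi_n(|N_i|,x_i)$ represented by a cellular map $a_i\:S^n\to|N_i|^{(n)}$ into the $n$-skeleton. Local $(n-1)$-connectedness of $X$ then lets me build partial realizations $\mu_i\:|N_i|^{(n)}\to X$ whose images lie in a fixed compact neighbourhood $K'$ of $K$ and shrink towards $K$ as the covers refine, by inducting over skeleta: vertices go to chosen points of $K$, and each $(k+1)$-cell is filled using local $k$-connectedness of $X$ for $k\le n-1$. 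For $i$ large the composite $g:=\mu_i\circ a_i\:S^n\to K'$ is a genuine map whose \v Cech class in $\check\pi_n(K',x)$ agrees with the image of $\alpha$, because $\mu_i$ is approximately inverse to the canonical map $K\to|N_i|$ and the nerve maps of refining covers are homotopic to the refinement projections in the relevant range of dimensions. Verifying this last compatibility — that the realized map carries exactly the prescribed \v Cech class, uniformly enough to pass to the limit — is the delicate technical point; it is a standard but fiddly partial-realization argument for locally $(n-1)$-connected spaces, and finite-dimensionality of $X$ is not needed since only the $n$-skeleton is ever realized.
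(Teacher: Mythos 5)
Your part (a) is correct and is exactly the paper's argument: the Quigley sequence gives natural surjections $\pi_n(K,x)\to\check\pi_n(K,x)$, and exactness of $\colim$ finishes.

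Part (b) has a genuine gap: the \emph{realization claim}, which you correctly identify as the heart of your argument, is false as stated. It is not true that in a locally $(n-1)$-connected local compactum every \v Cech class becomes realizable by a genuine map after enlarging the compactum. Take $n=1$ and $X=K=H$, the Hawaiian earring, which is compact (hence a local compactum) and locally path-connected; the only enlargement is $K'=H$ itself, so your claim would assert that $\spi_1(H,x)\to\check\pi_1(H,x)=\lim_m F_m$ is surjective, where $F_m$ is free on $x_1,\dots,x_m$ and the bonding maps kill the last generator. It is not: the sequence $u_m=[x_1,x_2][x_1,x_3]\cdots[x_1,x_m]$ is coherent (deleting $x_{m+1}$ from $u_{m+1}$ gives back $u_m$), but no genuine loop realizes it, since a loop traverses the first circle essentially only finitely many times (uniform continuity), which bounds the number of occurrences of $x_1^{\pm1}$ in the reduced form of its level-$m$ word independently of $m$, whereas $u_m$ is already reduced and contains $2(m-1)$ of them. (Of course $H$ violates the triviality hypothesis of (b); but your realization claim is asserted to follow from local $(n-1)$-connectedness alone, and your sketched proof of it uses nothing else. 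Under the full hypotheses the claim is an immediate consequence of the lemma's conclusion, so that route is circular.) The failure is precisely at the point you flagged as delicate: the partial realization $\mu_i\circ a_i$ agrees with $\alpha$ only at levels coarser than $i$, and a single map agreeing with $\alpha$ at \emph{all} levels need not exist.

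What the partial-realization technique does prove is that the realizable classes form a \emph{dense} subset of $\check\pi_n(Q,x)$ in the inverse-limit topology of discrete sets, for a suitable compactum $Q$, and this weaker statement suffices --- that is the paper's route. There one uses local compactness to choose a compactum $L$ with $K\subset\Int L$, factors the inclusion $K\to L$ through a locally connected compactum $Q$, and lets $D$ be the image of $\spi_n(Q,x)$ in $\check\pi_n(Q,x)$, which is dense (\cite{M1}*{Theorem 6.1(e)}). Elements of $D$ come from genuine maps $S^n\to Q\to L$, so the hypothesis (applied to $L$) kills their images in $\spi_n(M,x)$, hence the image of $D$ in $\check\pi_n(M,x)$ is the basepoint. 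Since the induced map $\check\pi_n(Q,x)\to\check\pi_n(M,x)$ is continuous and $\check\pi_n(M,x)$, an inverse limit of discrete sets, is Hausdorff, the image $E$ of all of $\check\pi_n(Q,x)$ lies in the closure of the image of $D$, so $E=1$; in particular $\check\pi_n(K,x)\to\check\pi_n(M,x)$ is trivial. So the repair is: keep your reduction, but replace ``every class becomes realizable after enlargement'' by ``realizable classes are dense,'' and add the continuity-plus-Hausdorff closure argument to upgrade triviality on a dense subset to triviality of the whole image.
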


\begin{proof}[Proof. (a)]
Since direct limit is an exact functor, the surjections $\pi_n(K,x)\to\check\pi_n(K,x)$ (see \cite{M1}*{Theorem 3.1(b)})
yield a surjection $\pi_n(X,x)\to\breve\pi_n(X,x)$.
\end{proof}

\begin{proof}[(b)] Given a compactum $K\subset X$ containing the basepoint, since $X$ is locally compact, 
there exists a compactum $L\subset X$ such that $K\subset\Int L$.
Then the inclusion $K\to L$ factors through a locally connected compactum $Q$ \cite{M1}*{Theorem 3.1(b)}.
Since $Q$ is locally connected, the image $D$ of the map $\spi_n(Q,x)\to\check\pi_n(Q,x)$ is dense 
in the topology of the inverse limit of discrete sets (see \cite{M1}*{Theorem 6.1(e)}).
On the other hand, by the hypothesis $L$ lies in a compactum $M$ such that the map $\spi_n(L,x)\to\spi_n(M,x)$ 
is trivial.
Then in particular the image of $D$ in $\check\pi_n(M,x)$ is trivial.
But $D$ is dense in the image $E$ of $\check\pi_n(Q,x)$ in $\check\pi_n(M)$.
Hence $E=1$.
Therefore the map $\check\pi_n(K,x)\to\check\pi_n(M,x)$ is trivial.
Thus $\breve\pi_n(X,x)=1$.
\end{proof}

\section{Auxiliary constructions}

\subsection{Straightening construction}

There is an almost obvious trick that converts a direct sequence of inv-morphisms between inverse sequences into a direct sequence 
of level maps by repeating some terms of the second inverse sequence, then some terms of the third inverse sequence, and so on.
In more detail:

\begin{lemma} \label{straightening}
Let $T_0\xr{f_0}T_1\xr{f_1}\dots$ be a direct sequence of inv-morphisms between inverse sequences 
$T_j=\big(\dots\xr{p_{1j}} G_{1j}\xr{p_{0j}} G_{0j}\big)$ in some category, with 
$f_j=\big(f_{ij}\:G_{l_{ij},j}\to G_{i,j+1}\big)$.
Then there exists a direct sequence of level maps $T_0'\xr{g_0}T'_1\xr{g_1}\dots$ between 
inverse sequences of the form 
$T'_i=\big(\dots\xr{p_{0j}} G_{1j}\xr{\id}\dots\xr{\id} G_{1j}\xr{p_{0j}} G_{0j}\xr{\id}\dots\xr{\id} G_{0j}\big)$
such that each $g_j$ consists of compositions of the form 
$G_{kj}\xr{p_{lj}\cdots p_{k-1,j}}G_{lj}\xr{f_{ij}}G_{i,j+1}$.
\end{lemma}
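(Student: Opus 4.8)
The plan is to realize each tower $T_j$ by a reindexed copy $T_j'$ obtained by repeating terms, with the repetition pattern chosen so that the a priori non-level inv-morphisms $f_j$ become level maps $g_j$. First I would fix representatives of the $f_j$ whose index functions $i\mapsto l_{ij}$ are strictly increasing in $i$ and normalized so that $l_{0j}=0$ (a routine reduction for inv-morphisms); then the defining compatibility of an inv-morphism reads, for $i\le i'$, that $(\text{bonding }G_{i',j+1}\to G_{i,j+1})\circ f_{i'j}=f_{ij}\circ(\text{bonding }G_{l_{i'j},j}\to G_{l_{ij},j})$. I would then introduce reindexing functions $\sigma_j\colon\N\to\N$ defined recursively by $\sigma_0=\id$ and $\sigma_{j+1}(m)=\max\{i\mid l_{ij}\le\sigma_j(m)\}$, and let $T_j'$ be the inverse sequence whose $m$th term is $G_{\sigma_j(m),j}$, with bonding map $p_{\sigma_j(m),j}$ when $\sigma_j(m+1)=\sigma_j(m)+1$ and $\id$ when $\sigma_j(m+1)=\sigma_j(m)$. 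This is exactly the schematic shape displayed in the statement.

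Next I would record the elementary properties of the $\sigma_j$. By induction each $\sigma_j$ is non-decreasing with unit increments (raising the threshold $\sigma_j(m)$ by one admits at most one new index $i$, since the $l_{ij}$ are strictly increasing) and is cofinal in $\N$ (each $l_{ij}$ is finite and $\sigma_j(m)\to\infty$). Consequently $T_j'$ is obtained from $T_j$ purely by repeating terms, so the evident reindexing is an inv-isomorphism $T_j'\cong T_j$. The defining inequality $\sigma_j(m)\ge l_{\sigma_{j+1}(m),j}$ then lets me define $g_j$ at level $m$ to be the composition $G_{\sigma_j(m),j}\to G_{l_{\sigma_{j+1}(m),j},j}\xr{f_{\sigma_{j+1}(m),j}}G_{\sigma_{j+1}(m),j+1}$, where the first arrow is the appropriate bonding map of $T_j$; this is precisely a morphism of the required form with $k=\sigma_j(m)$, $l=l_{\sigma_{j+1}(m),j}$ and $i=\sigma_{j+1}(m)$.

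The crux is to verify that each $g_j$ is a genuine level map, i.e.\ that the square formed by $g_j$ at levels $m$ and $m+1$ and by the bonding maps of $T_j'$ and $T_{j+1}'$ commutes for every $m$. Writing $i=\sigma_{j+1}(m)\le i'=\sigma_{j+1}(m+1)$, both ways around the square are maps $G_{\sigma_j(m+1),j}\to G_{i,j+1}$. Going along the top and then down gives $f_{ij}$ precomposed with the bonding map $G_{\sigma_j(m+1),j}\to G_{l_{ij},j}$, while going down and then along the bottom gives $(\text{bonding }G_{i',j+1}\to G_{i,j+1})\circ f_{i'j}$ precomposed with $G_{\sigma_j(m+1),j}\to G_{l_{i'j},j}$. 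Applying the inv-morphism compatibility rewrites the latter as $f_{ij}$ precomposed with $G_{\sigma_j(m+1),j}\to G_{l_{i'j},j}\to G_{l_{ij},j}$, which by functoriality of the bonding maps agrees with the former. This is the one place the hypothesis that $f_j$ is an inv-morphism enters, and it is the step to carry out with care; happily, once the recursion is set up it collapses to this two-line diagram chase.

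Finally I would observe that the reindexing inv-isomorphisms $T_j'\cong T_j$ intertwine $g_j$ with $f_j$, so that $T_0'\xr{g_0}T_1'\xr{g_1}\dots$ is isomorphic, as a direct sequence of inverse sequences, to $T_0\xr{f_0}T_1\xr{f_1}\dots$; thus nothing is lost in straightening. I expect the genuine obstacle to be not any single hard estimate but the indexing bookkeeping that makes the schematic form literally correct: ensuring the $\max$ defining $\sigma_{j+1}(m)$ is non-empty and equals $0$ at $m=0$ (handled by the normalization $l_{0j}=0$, or by trimming the first few terms), and, if one prefers not to pass to strictly increasing representatives, replacing single bonding maps $p_{0j}$ by their evident composites wherever $\sigma_j$ jumps. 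Everything beyond the square-commutativity verification is a routine consequence of the recursion.
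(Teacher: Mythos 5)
Your proof is correct and takes essentially the same route as the paper's: your reindexing functions $\sigma_j$ are exactly the paper's indices $i_{kj}$ (defined by the identical recursion $i_{k0}=k$, $i_{k,j+1}=\max\{i\mid l_{ij}\le i_{kj}\}$), and your level maps $g_j$ are the same compositions $G_{\sigma_j(m),j}\to G_{l_{ij},j}\xr{f_{ij}}G_{i,j+1}$. The only differences are that you spell out the square-commutativity verification and the normalization $l_{0j}=0$, both of which the paper leaves implicit.
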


\begin{proof} We may assume without loss of generality that each $l_{i+1,j}>l_{ij}$
(see \cite{M00}*{proof of Proposition \ref{book:pro-sequence}}).
Let $i_{kj}$ be defined by $i_{k0}=k$ and by setting $i_{k,j+1}$ to be the maximal number $i$ 
such that $l_{ij}\le i_{kj}$.
It is easy to see that $i_{k+1,j}\ge i_{kj}$ and that $i_{kj}\to\infty$ as $k\to\infty$.
Let $H_{kj}=G_{i_{kj},j}$.
Let $g_{ij}\:H_{kj}\to H_{k,j+1}$ be the composition 
\[H_{kj}=G_{i_{kj},j}\xr{p'_{ij}} G_{l_{ij},j}\xr{f_{ij}} G_{i,j+1}=H_{k,j+1},\]
where $i=i_{k,j+1}$ and $p'_{ij}$ is the bonding map of $T_j$.
Let $T'_j=\big(\dots\xr{q_{1j}} H_{1i}\xr{q_{0j}} H_{0i}\big)$, where each 
$q_{ij}$ is the bonding map of $T_j$.
Then the level map $g_j\:T'_j\to T'_{j+1}$ consisting of the $g_{ij}$ is as desired.
\end{proof}

\subsection{Two-parameter mapping cylinder}

Speaking informally, the following lemma is saying that given a homotopy commutative square diagram,
the two versions of its two-parameter mapping cylinder (that is, the two obvious ways of ``filling in the square'' 
whose boundary is formed by the mapping cylinders of the 4 given maps) are homeomorphic.

\begin{lemma} \label{2-mc}
Given a homotopy commutative diagram of compacta
\[\begin{tikzcd}[row sep=0.8em,column sep=1.4em]
& Y_+ \ar[rd,"g_+"] & \\
X \ar[ru, "f_+"]\ar[rd, "f_-"']  & & Z, \\
& Y_- \ar[ru,"g_-"'] &         
\end{tikzcd}\]
let $\Phi_\pm$ be the composition $MC(f_\pm)\cong X\x I\cup MC(f_\pm)\xr{\pi_\mp\cup\rho_\pm} MC(g_\mp)\cup Z=MC(g_\mp)$, where 
$\pi_\mp$ is the composition $X\x I\xr{f_\mp\x\id_I}Y_\mp\x I\xr{q}MC(g_\mp)$ and 
$\rho_\pm$ is defined by the strictly commutative diagram
\[\begin{tikzcd}
& MC(f_\pm) \ar[rd,"\rho_\pm"] & \\
X\x I \ar[ru,"q"]\ar[rr,"h_\pm"]  & & Z,
\end{tikzcd}\]
where $h_+$ is the given homotopy and $h_-(x,t)=h_+(x,1-t)$.
Then $MC(\Phi_+)\cong MC(\Phi_-)$.
\end{lemma}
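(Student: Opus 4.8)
The plan is to realise each of $MC(\Phi_+)$ and $MC(\Phi_-)$ as a quotient of $X\x R$, where $R$ is a rectangle, with the mapping cylinders of $g_+$ and $g_-$ and the space $Z$ glued on, and then to write down an explicit homeomorphism between the two quotients. First I would replace $MC(f_\pm)$ by the homeomorphic extended cylinder $\overline{MC}(f_\pm)\bydef\big(X\x[0,2]\sqcup Y_\pm\big)/\big((x,2)\sim f_\pm(x)\big)$ that is implicit in the definition of $\Phi_\pm$, and spell $\Phi_\pm$ out on it: on the prepended piece $X\x[0,1]$ it is $(x,s)\mapsto[f_\mp(x),s]$ in the cylinder of $g_\mp$, on the old cylinder $X\x[1,2]$ it is $(x,s)\mapsto h_\pm(x,s-1)\in Z$, and on $Y_\pm$ it is $g_\pm$. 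The only point to verify here is that these three formulas agree on their two overlaps, and this is exactly the content of the endpoint identities $h_\pm(x,0)=g_\mp f_\mp(x)$ and $h_\pm(x,1)=g_\pm f_\pm(x)$, which are forced by the defining square for $\rho_\pm$ together with the homotopy commutativity of the given diagram.

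Next I would unwind $MC(\Phi_\pm)$. Writing $t$ for the mapping-cylinder coordinate, $MC(\Phi_+)$ is the quotient of $X\x R_+\sqcup Y_+\x[0,1]\sqcup Y_-\x[0,1]\sqcup Z$ with $R_+=[0,2]_s\x[0,1]_t$, in which the $X$-fibre is collapsed along three arcs of the upper and right edges: along $\{s=2\}$ by $f_+$ onto the cylinder of $g_+$, along $\{t=1,\ s\in[0,1]\}$ by $f_-$ onto the cylinder of $g_-$, and along $\{t=1,\ s\in[1,2]\}$ into $Z$ by $(x,s)\mapsto h_+(x,s-1)$; the remaining edges $\{s=0\}$ and $\{t=0\}$ carry the free copies of $X$ that constitute $MC(f_-)$ and $MC(f_+)$. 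The same computation presents $MC(\Phi_-)$ over $R_-=[0,2]_\sigma\x[0,1]_\tau$ by the identical picture with the two $f$'s and the two $g$'s interchanged and with $h_-(x,\sigma-1)=h_+(x,2-\sigma)$ on the homotopy arc.

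I would then take the homeomorphism $MC(\Phi_+)\to MC(\Phi_-)$ to be the identity on $Y_+$, on $Y_-$ and on $Z$, and on the $X$-fibres to be $\id_X$ times a homeomorphism $R_+\to R_-$. The latter is prescribed on $\partial R_+$ by matching collapse arcs to collapse arcs: the $f_+$-arc $\{s=2\}$ to the $f_+$-arc $\{\tau=1,\ \sigma\in[0,1]\}$ by $\sigma=t$, the $f_-$-arc $\{t=1,\ s\in[0,1]\}$ to the $f_-$-arc $\{\sigma=2\}$ by $\tau=s$, the homotopy arc $\{t=1,\ s\in[1,2]\}$ to the homotopy arc $\{\tau=1,\ \sigma\in[1,2]\}$ by the reversal $\sigma=3-s$ (which is exactly what turns $h_+(\cdot,s-1)$ into $h_-(\cdot,\sigma-1)$), and the two free edges matched to the two free edges accordingly. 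A short check shows these arc maps agree at both junctions $(s,t)=(1,1)$ and $(s,t)=(2,1)$ of the upper-right boundary, so they assemble into a homeomorphism $\partial R_+\to\partial R_-$; I would extend it over the interior by the Alexander coning trick, confirm that $\id_X$ times this extension respects every gluing above, and conclude that the whole assignment descends to a continuous bijection of quotients. Since $MC(\Phi_\pm)$ are compacta, this bijection is automatically a homeomorphism.

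The step I expect to be the main obstacle is the bookkeeping around the homotopy arc and the corner representing $Z$, which is the one place where the diagram is only homotopy commutative. Both $MC(\Phi_+)$ and $MC(\Phi_-)$ genuinely carry the homotopy $h$, but along arcs of opposite orientation, so the heart of the argument is to arrange the rectangle homeomorphism to carry one homotopy arc onto the other (the reparametrisation $h_-(x,t)=h_+(x,1-t)$ is precisely what makes this compatible) and to check continuity at the fold points where the homotopy arc abuts the $f$-collapse arcs and the copy of $Z$. Away from this corner the identifications are products and the verifications are routine.
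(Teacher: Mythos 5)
Your proof is correct and takes essentially the same route as the paper's: both exhibit $MC(\Phi_+)$ and $MC(\Phi_-)$ as $X\times(\text{rectangle})$ adjoined to $W=MC(g_+)\cup_Z MC(g_-)$ along an L-shaped piece of the boundary (one edge collapsed via $f_+$ into $MC(g_+)$, the adjacent edge split between an $f_-$-collapse into $MC(g_-)$ and the homotopy arc into $Z$), and both conclude by noting that the two attaching configurations differ by a self-homeomorphism of the rectangle that swaps the two edges and reverses the homotopy arc, which is exactly what $h_-(x,t)=h_+(x,1-t)$ compensates. The paper packages this more compactly via a single map $\phi\colon X\times[0,3]\to W$ precomposed with two boundary embeddings $e_\pm$ differing by the coordinate swap of $I\times I$, whereas you match the arcs explicitly and extend by the Alexander trick, but the content is identical.
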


\begin{proof} Let $W=MC(g_+)\cup_Z MC(g_-)$.
Let $\phi\:X\x[0,3]\to W$ be the union of the compositions 
\begin{align*}
X\x[0,1]&\xr{f_+\x(t\,\mapsto\, 1-t)}Y_+\x I\xr{q}MC(g_+)\\
X\x[1,2]&\xr{\id\x(t\,\mapsto\, t-1)}X\x I\xr{h}Z\\
X\x[2,3]&\xr{f_-\x(t\,\mapsto\, t-2)}Y_-\x I\xr{q}MC(g_-).
\end{align*}
Let $e_+\:[0,3]\to I\x I$ be defined by $e_+(t)=(1-t,0)$ for $t\in [0,1]$ and by $e_+(t)=(0,\frac{t-1}2)$ for $t\in [1,3]$.
Let $e_-$ be the composition $[0,3]\xr{e_+}I\x I\xr{(t,s)\mapsto\,(s,t)}I\x I$.
Finally, let $\psi_\pm$ be the composition $X\x(I\x\{0\}\cup\{0\}\x I)\xr{\id_X\x e_\pm^{-1}}X\x [0,3]\xr{\phi} W$.

Then it is easy to see that $MC(\Phi_\pm)$ is the adjunction space $X\x I\x I\cup_{\psi_\pm}W$.
Now the assertion follows since $e_+$ and $e_-$ differ by a homeomorphism.
\end{proof}

\subsection{Mapping cylinder lemma}

The following lemma will be used in the proof of Theorem \ref{ind-isomorphism2}.
It is not needed for Theorem \ref{Wh-thm}.

\begin{lemma} \label{ind-isomorphism}
Let $X_0\subset X_1\subset\dots$ and $Y_0\subset Y_1\subset\dots$ be 
metrizable spaces, $f_i\:X_i\to Y_i$ be maps such that each $f_{i+1}$ is
an extension of $f_i$, and let $M_i=MC(f_i)$.

(a) The induced maps $f_{i*}\:H_n(X_i)\to H_n(Y_i)$ represent an ind-isomorphism for each $n$
if and only if for each $n$ and $j$ there exists a $k\ge j$ such that the inclusion induced map 
$H_n(M_j,X_j)\to H_n(M_k,X_k)$ is trivial.

(b) Suppose that for each $j$ and each $x\in X_j$, all elements of $\pi_0(X_{j+1},x)$
that come from $\pi_0(X_j,x)$ are represented by points of $X_{j+1}$.
Then the following are equivalent:
\begin{enumerate}
\item for each $n$ and $i$ there exists a $k\ge i$ such that for each $x\in X_i$ there exists 
a map $\phi\:\pi_n\big(Y_i,f(x)\big)\to\pi_n(X_k,x)$ such that the compositions 
$\pi_n(X_i,x)\xr{f_{i*}}\pi_n\big(Y_i,f(x)\big)\xr{\phi}\pi_n(X_k,x)$ and 
$\pi_n\big(Y_i,f(x)\big)\xr{\phi}\pi_n(X_k,x)\xr{f_{k*}}\pi_n\big(Y_k,f(x)\big)$
coincide with the bonding maps;
\item for each $n$ and $i$ there exists a $k\ge i$ such that for each $x\in X_i$ the inclusion 
induced map $\pi_n(M_i,X_i,x)\to\pi_n(M_k,X_k,x)$ is trivial.
\end{enumerate}
\end{lemma}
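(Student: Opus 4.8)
The common engine will be the long exact sequence of the pair $(M_i,X_i)$ together with the strong deformation retraction of $M_i=MC(f_i)$ onto $Y_i$, which identifies $H_n(M_i)\cong H_n(Y_i)$ and $\pi_n(M_i,x)\cong\pi_n\big(Y_i,f(x)\big)$ in such a way that the inclusion $X_i\emb M_i$ induces $f_{i*}$; these identifications are compatible with the inclusions $M_i\subset M_{i+1}$, so as $i$ varies the level-wise exact sequences form a commuting ladder. I write $[\,\cdot\,]_k$ for the bonding to level $k$. For a direct sequence $\Gamma_0\to\Gamma_1\to\dots$ the condition ``for each $j$ some $\Gamma_j\to\Gamma_k$ is trivial'' is precisely the statement that it is the zero object of the ind-category; I shall call this being \emph{ind-trivial}, and I recall that a level-wise exact sequence of direct sequences is exact in the abelian ind-category.

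For (a) I would break the homology ladder into the natural short exact sequences
\[0\to\coker\big(H_n(X_i)\xr{f_{i*}}H_n(Y_i)\big)\to H_n(M_i,X_i)\to\ker\big(H_{n-1}(X_i)\xr{f_{i-1,*}}H_{n-1}(Y_i)\big)\to 0\]
in the ind-category. Hence $\{H_n(M_i,X_i)\}_i$ is ind-trivial for every $n$ if and only if all ind-kernels and ind-cokernels of the $f_{i*}$ vanish, which is exactly the assertion that each level map $f_{i*}$ is an ind-isomorphism. The index shift in the displayed sequence is what makes the two ``for all $n$'' quantifiers correspond, so the equivalence holds with no degreewise surprises.

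For (b) the plan is to run the same bookkeeping on the exact sequence of homotopy groups and pointed sets, keeping everything uniform in the basepoint $x\in X_i$; here the hypothesis on $\pi_0$ serves exactly to guarantee that each component-class surviving from $X_j$ contains an honest point of $X_{j+1}$, so that the based sequences at a fixed $x$ are simultaneously available at all levels and the relative $\pi_0$ (defined as a cokernel) fits the pattern above. To prove $(2)\Rightarrow(1)$, given $n$ and $i$ I would pick by (2) a single $k$ at which \emph{both} $\pi_n(M_i,X_i,x)$ and $\pi_{n+1}(M_i,X_i,x)$ map trivially. For $\alpha\in\pi_n\big(Y_i,f(x)\big)\cong\pi_n(M_i,x)$ the vanishing of its image in $\pi_n(M_k,X_k,x)$ lets me lift the bonded $\alpha$ along $f_{k*}$; I would set $\phi(\alpha)$ to be such a lift, taking it to be the bonded preimage whenever $\alpha\in\im f_{i*}$. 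The vanishing of $\pi_{n+1}(M_i,X_i,x)$ at level $k$ forces $\ker f_{i*}=\im\partial$ to bond trivially, so this choice is independent of the preimage; then $f_{k*}\phi=[\,\cdot\,]_k$ and $\phi f_{i*}=[\,\cdot\,]_k$ are exactly (1), and since $k$ depends only on $(n,i)$ the construction is uniform in $x$.

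For $(1)\Rightarrow(2)$ I would kill a class $\xi\in\pi_n(M_i,X_i,x)$ in two exactness steps: the degree-$(n-1)$ instance of (1) makes $[\partial\xi]_k$ trivial, since $\partial\xi\in\ker f_{i-1,*}$ and $\phi f_{i-1,*}=[\,\cdot\,]_k$, so $[\xi]_k\in\ker\partial=\im j_k$ and $[\xi]_k=j_k(\beta)$ for some $\beta\in\pi_n\big(Y_k,f(x)\big)$; then the degree-$n$ instance of (1) started at level $k$ writes $[\beta]_{k'}=f_{k'*}\phi'(\beta)\in\im f_{k'*}=\ker j_{k'}$, whence $[\xi]_{k'}=j_{k'}f_{k'*}\phi'(\beta)$ is trivial. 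As the levels come only from the uniform conditions in (1), one $k'$ works for all $\xi$ and all $x$, which is (2). I expect the main obstacle to be the well-definedness and naturality of $\phi$ in the pointed-set and non-abelian ranges $n=0,1$ of $(2)\Rightarrow(1)$: there $\phi$ is only a map of pointed sets, and one must verify the two composition identities by hand from exactness of the sequence of pointed sets, using the $\pi_0$ hypothesis to represent the relevant fibers by points and keep the lifts coherent across all $x\in X_i$; everything else is formal homological algebra in the ind-category.
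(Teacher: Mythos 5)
Your part (a) is correct and takes a genuinely different route from the paper's. The paper converts the ind-isomorphism into explicit homomorphisms $\phi_i\:H_n(Y_i)\to H_n(X_{\alpha_n(i)})$ (with the compositions adjusted to equal bonding maps) and then runs the same element chase as in (b); you instead cut the homology ladder into the level short exact sequences relating $H_n(M_i,X_i)$ to $\coker\big(H_n(X_i)\to H_n(Y_i)\big)$ and $\ker\big(H_{n-1}(X_i)\to H_{n-1}(Y_i)\big)$ (the map here is $f_{i*}$ in degree $n-1$, not $f_{i-1,*}$ --- a slip) and invoke exactness of level-wise sequences in the abelian ind-category together with the fact that a morphism there is invertible iff its kernel and cokernel vanish. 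That is shorter, and it makes transparent why the two ``for all $n$'' quantifiers must be carried along. Your $(1)\Rightarrow(2)$ in (b) is the paper's own chase (degree $n-1$ instance of (1) at level $i$, degree $n$ instance at the intermediate level) and is fine.

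The gap is in $(2)\Rightarrow(1)$. For $n\ge 1$ you use a single level $k$ and define $\phi(\alpha)$ as a lift of $[\alpha]_k$ along $f_{k*}$. The ambiguity of such a lift is $\ker f_{k*}=\im\big(\pi_{n+1}(M_k,X_k,x)\xr{\partial}\pi_n(X_k,x)\big)$, which your hypotheses do not kill: triviality of $\pi_{n+1}(M_i,X_i,x)\to\pi_{n+1}(M_k,X_k,x)$ only makes $\ker f_{i*}$ bond trivially. So your $\phi$ is canonical only on $\im f_{i*}$ and elsewhere is a pointed function built from unstructured choices; nothing makes it a homomorphism. The paper avoids this with two stages --- a level $j$ killing $\pi_n(M_i,X_i,x)$ and then a level $k$ killing $\pi_{n+1}(M_j,X_j,x)$ --- so that lifts are chosen at level $j$ and pushed to level $k$, where their ambiguity $\ker f_{j*}$ dies; this makes $\phi$ everywhere well defined and a homomorphism, which is what is actually needed when the lemma is fed into Theorem \ref{ind-isomorphism1}(b) to conclude that the $f_{i*}$ represent an ind-isomorphism of homotopy groups.

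More seriously, the case $n=0$ --- the only place where the hypothesis of (b) enters --- is not proved but only hoped for, and your well-definedness mechanism is unavailable there: for pointed sets, $f_{i*}(h)=f_{i*}(h')$ yields no element of $\ker f_{i*}$, since exactness of the $\pi_0$-sequence controls only the fiber over the base class. Note that even the bare-function reading of (1) requires this well-definedness, because $\phi f_{i*}$ must equal the bonding map, so all preimages of a given class must have the same bonded image. The missing idea is the paper's basepoint change: use the hypothesis to represent the images of $h,h'$ in $\pi_0(X_{j+1},x)$ by actual points $z,z'\in X_{j+1}$; re-base the exact sequence of the pair at $z$, so that the class of $z'$ in $\pi_0(X_{j+1},z)$ lies over the base class of $\pi_0\big(Y_{j+1},f(z)\big)$ and hence comes from some $a\in\pi_1(M_{j+1},X_{j+1},z)$; then kill $a$ using condition (2) for $n=1$ applied at level $j+1$, with basepoints ranging over the new points $z$ rather than over $X_i$. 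Without this step, and the attendant level shift to $j+1$, the $n=0$ case does not go through.
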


\begin{proof}[Proof. (a)] 
Suppose that the induced maps $f_{i*}\:H_n(X_i)\to H_n(Y_i)$ represent an ind-isomorphism for some $n$.
Then its inverse can be represented by maps of the form $\phi_i\:H_n(Y_i)\to H_n(X_{\alpha_n(i)})$ 
for some strictly monotone map $\alpha_n\:\N\to\N$ (see \cite{M00}*{Proposition \ref{book:ind-sequence}}).
Then the compositions $H_n(X_i)\xr{f_{i*}}H_n(Y_i)\xr{\phi_i}H_n(X_{\alpha_n(i)})$ and
$H_n(Y_i)\xr{\phi_i}H_n(X_{\alpha_n(i)})\xr{f_{\alpha_n(i)*}}H_n(Y_{\alpha_n(i)})$
represent the identity ind-morphisms.
We may assume that these compositions equal the bonding maps, by replacing each $\phi_i$ with 
the composition $H_n(Y_i)\xr{\phi_i}H_n(X_{\alpha_n(i)})\to H_n(X_{\alpha'_n(i)})$
for an appropriate $\alpha'_n(i)\ge\alpha_n(i)$.

Now the proof is similar to the following proof of (b), but easier.
\end{proof}

\begin{proof}[(b)] 
{\it (1)$\Rightarrow$(2).} We are given an $n$ and an $i$.
Then there exist a $j\ge i$ and a $k\ge j$ such that
for each $x\in X_i$ there exist maps 
$\phi\:\pi_{n-1}\big(Y_i,f(x)\big)\to\pi_{n-1}(X_j,x)$ and
$\psi\:\pi_n\big(Y_j,f(x)\big)\to\pi_n(X_k,x)$
such that the compositions 
$\pi_{n-1}(X_i,x)\xr{f_{i*}}\pi_{n-1}\big(Y_i,f(x)\big)\xr{\phi}\pi_{n-1}(X_j,x)$ and 
$\pi_n\big(Y_j,f(x)\big)\xr{\psi}\pi_n(X_k,x)\xr{f_{k*}}\pi_n\big(Y_k,f(x)\big)$
coincide with the bonding maps.
Now with this choice of $k$ we are also given an $x\in X_i$.
Then a straightforward diagram chasing in the following commutative diagram with exact rows,
where the basepoints $x$ and $f(x)$ are suppressed:
\[\begin{tikzcd}[row sep=1.5em,column sep=1.5em]
& & & {\pi_n(M_i,X_i)} \rar \dar & \pi_{n-1}(X_i) \ar[rr,"f_{i*}"] \dar & & \pi_{n-1}(Y_i) \ar[dll,"\phi"] \\
& & \pi_n(Y_j) \rar \dar \ar[dll,"\psi"'] & \pi_n(M_j,X_j) \rar \dar & \pi_{n-1}(X_j) & & \\
\pi_n(X_k) \ar[rr,"f_{k*}"'] & & \pi_n(Y_k) \rar & \pi_n(M_k,X_k) & &
\end{tikzcd}\]
shows that the bonding map $\pi_n(M_i,X_i)\to\pi_n(M_k,X_k)$ is trivial.
(In the case $n=0$, the same diagram makes sense if we set $\pi_{-1}$ to always be the trivial pointed set.)

{\it (2)$\Rightarrow$(1).}
We are given an $n$ and an $i$.

Let us first consider the case $n\ge 1$.
There exist a $j\ge i$ and a $k\ge j$ such that the inclusion induced maps
$\pi_n(M_i,X_i,x)\to\pi_n(M_j,X_j,x)$ and $\pi_{n+1}(M_j,X_j,x)\to\pi_{n+1}(M_k,X_k,x)$ 
are trivial for each $x\in X_i$.
Now with this choice of $k$ we are also given an $x\in X_i$.
We will use the following commutative diagram with exact rows, where the basepoints $x$ and $f(x)$ are suppressed:
\[\begin{tikzcd}
& & \pi_n(Y_i) \rar\dar & \pi_n(M_i,X_i)\dar["1"] \\
\pi_{n+1}(M_j,X_j) \rar\dar["1"] & \pi_n(X_j)\rar["f_{j*}"]\dar & \pi_n(Y_j) \rar & \pi_n(M_j,X_j) \\
\pi_{n+1}(M_k,X_k)\rar & \pi_n(X_k). &
\end{tikzcd}\]
Given a $g\in\pi_n(Y_i)$, its image in $\pi_n(Y_j)$ equals the image of some $h\in\pi_n(X_j)$.
Given another element $h'\in\pi_n(X_j)$ whose image in $\pi_n(Y_j)$ equals the image of $g$,
it is easy to see that $h^{-1}h'$ has trivial image in $\pi_n(X_k)$.
Hence the image of $h$ in $\pi_n(X_k)$, denoted $\phi(g)$, depends only on $g$.
Given a $g'\in\pi_n(Y_i)$, it is easy to see that 
$\phi(gg')^{-1}\phi(g)\phi(g')=1$.
Hence $\phi\:\pi_n\big(Y_i,f(x)\big)\to\pi_n\big(X_k,x\big)$ is a homomorphism.
By construction the compositions 
$\pi_n(X_i)\xr{f_{i*}}\pi_n(Y_i)\xr{\phi}\pi_n(X_k)$ and
$\pi_n(Y_i)\xr{\phi}\pi_n(X_k)\xr{f_{k*}}\pi_n(Y_k)$ equal the bonding maps.

It remains to consider the case $n=0$.
There exist a $j\ge i$ and a $k\ge j$ such that the inclusion induced maps
$\pi_0(M_i,X_i,x)\to\pi_0(M_j,X_j,x)$ and $\pi_1(M_{j+1},X_{j+1},z)\to\pi_1(M_k,X_k,z)$ 
are trivial for each $x\in X_i$ and for each $z\in X_j$.
Now with this choice of $k$ we are also given an $x\in X_i$.
Let $y=f(x)$.
Given a $g\in\pi_0\big(Y_i,y\big)$, its image in $\pi_0\big(Y_j,y)$ equals the image of 
some $h\in\pi_0(X_j,x)$.
Suppose that it has the same image in $\pi_0\big(Y_j,y\big)$ as some other element $h'\in\pi_0(X_j,x)$.
By the hypothesis the images of $h$ and $h'$ in $\pi_0(X_{j+1},x)$
are represented by some points $z,z'\in X_{j+1}$.
Then the class of $z'$ in $\pi_0(X_{j+1},z)$ maps to the trivial element of $\pi_0\big(Y_{j+1},f(z)\big)$.
Hence it is the image of some $a\in\pi_1(M_{j+1},X_{j+1},z)$.
By our choice of $k$ this $a$ maps trivially to $\pi_1(M_k,X_k,z)$.
Hence $z'$ represents the trivial element of $\pi_0(X_k,z)$.
Therefore $z$ and $z'$ represent the same element $b$ of $\pi_0(X_k,x)$.
Thus this $b$ depends only on $g$.
Then the argument proceeds as before.
\end{proof}

\begin{remark} \label{coherent}
Let us formulate a version of Lemma \ref{ind-isomorphism}(b) where we are not given
genuine maps $f_i\:X_i\to Y_i$, but merely a coherent map (in the sense of sequential strong antishape,
see \cite{M-I}*{\S\ref{fish:sash}}); on the other hand, we shall not need to assume that
every element of $\pi_0(X_i,x)$ is represented by a map $pt\to X_i$.

Thus let $X$ and $Y$ be local compacta.
We are given metrizable spaces $U$ and $V$ containing respectively $X$ and $Y$,
and chains of subsets $U_0\subset U_1\subset\dots$ of $U$ and
of subsets $V_0\subset V_1\subset\dots$ of $V$ such that
\begin{enumerate}
\item each $U_i$ and $V_i$ are compact absolute retracts;
\item each $X_i\bydef U_i\cap X$ is a Z-set in $U_i$ and each $Y_i\bydef V_i\cap Y$ is a Z-set in $V_i$;
\item $\{X_0,X_1,\dots\}$ and $\{Y_0,Y_1,\dots\}$ are cofinal subsets of $\K_X$ and $\K_Y$,
\end{enumerate}
where $\K_X$ is the poset of all compact subsets of $X$, ordered by inclusion.
Writing $U_i^\circ=U_i\but X_i$ and $V_i^\circ=V_i\but Y_i$, we are also given
proper maps $f_i\:U_i^\circ\to V_i^\circ$ along with proper homotopies 
$F_i\:U_i^\circ\x I\to V_{i+1}^\circ$ between the two compositions of  the diagram
\[\begin{tikzcd}[row sep=2em]
U_{i+1}^\circ\ar[r,"f_{i+1}"]&V_{i+1}^\circ\\
U_i^\circ\ar[r,"f_i"']\ar[u,hook]&V_i^\circ.\ar[u,hook]
\ar[ul,phantom,"{}_{F_i}\hspace{-2pt}\rotatebox{135}{$\Leftarrow$}"]
\end{tikzcd}\]
Next we consider the metric mapping cylinders $N_i^\circ\bydef MC(f_i)$ (see \cite{M00}*{\S\ref{book:mmc}}).
 
Then the following are equivalent:
\begin{enumerate}
\item for each $n$ and $i$ there exists a $k\ge i$ such that for every proper map 
$r\:[0,\infty)\to U_i^\circ$ there exists a map $\phi\:\Pi_n\big(V_i^\circ,f_ir\big)\to\Pi_n(U_k^\circ,r)$ 
such that the compositions $\Pi_n(U_i^\circ,r)\xr{f_{i*}}\Pi_n\big(V_i^\circ,f_ir\big)\xr{\phi}\Pi_n(U_k^\circ,r)$ and 
$\Pi_n\big(V_i^\circ,f_ir\big)\xr{\phi}\Pi_n(U_k^\circ,r)\xr{f_{k*}}\Pi_n\big(V_k^\circ,f_kr\big)$
coincide with the bonding maps;
\item for each $n$ and $i$ there exists a $k\ge i$ such that for for every proper map 
$r\:[0,\infty)\to U_i^\circ$ the inclusion 
induced map $\pi_n(N_i^\circ,U_i^\circ,r)\to\pi_n(N_k^\circ,U_k^\circ,r)$ is trivial.
\end{enumerate}

The proof is similar to that of Lemma \ref{ind-isomorphism}(b).
\end{remark}

\section{Factorization lemmas}

This section contains the proof of Theorem \ref{factorization-main} and some related results.

\subsection{The abelian case}

\begin{lemma} \label{level-factorization} 
Let $f=(f_i\:A_i\to C_i)_{i\in\N}$ be a level map between towers of abelian groups.

(a) $f$ factors through a tower $(B_i)_{i\in\N}$ of quotient groups $B_i$ of $A_i$ such that 
the map $\lim A_i\to\lim B_i$ is surjective and the map $\lim B_i\to\lim C_i$ is injective.
Also, the map $\lim^1 A_i\to\lim^1 B_i$ is an isomorphism.

(b) $f$ factors through a tower of subgroups $B_i\subset C_i$ such that 
the map $\lim^1 A_i\to\lim^1 B_i$ is surjective and the map $\lim^1 B_i\to\lim^1 C_i$ is injective.
Also, the map $\lim B_i\to\lim C_i$ is an isomorphism.
\end{lemma}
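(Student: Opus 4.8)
The plan is to treat (a) and (b) as formal duals, reducing each to a single classical Mittag--Leffler fact applied to an auxiliary tower. For (a) the auxiliary tower will be $L_i\bydef\ker f_i$, and for (b) it will be $Q_i\bydef\coker f_i=C_i/\im f_i$. Both are again towers of abelian groups: because $f$ is a level map, the bonding maps of $(A_i)$ carry $L_{i+1}$ into $L_i$, while those of $(C_i)$ descend to $(Q_i)$. The only external input I will need is the standard fact that for any tower of abelian groups $(T_i)$ the subtower of eventual images $T_i'\bydef\bigcap_{j\ge i}\im(T_j\to T_i)$ satisfies $\lim^1 T_i'=0$ and the inclusion induces an isomorphism $\lim T_i'\cong\lim T_i$; plugging these two facts into the six-term exact sequence of $0\to T_i'\to T_i\to T_i/T_i'\to 0$ then yields the additional consequence $\lim(T_i/T_i')=0$.

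For (a) I would set $K_i\bydef\bigcap_{j\ge i}\im(L_j\to L_i)\subseteq\ker f_i$ and $B_i\bydef A_i/K_i$, which is a quotient tower of $(A_i)$; since $K_i\subseteq\ker f_i$, the map $f$ factors as $A_i\twoheadrightarrow B_i\to C_i$. Running the six-term sequence of $0\to K_i\to A_i\to B_i\to 0$ and using $\lim^1 K_i=0$ gives at once that $\lim A_i\to\lim B_i$ is surjective and that $\lim^1 A_i\to\lim^1 B_i$ is an isomorphism. For the injectivity of $\lim B_i\to\lim C_i$ I note $\ker(B_i\to C_i)=\ker f_i/K_i=L_i/K_i$, and since the input gives $\lim(L_i/K_i)=0$, the map $\lim B_i\to\lim\im f_i\hookrightarrow\lim C_i$ is injective.

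For (b) I would dualize: set $\bar B_i\bydef\bigcap_{j\ge i}\im(Q_j\to Q_i)\subseteq Q_i$ and let $B_i\subseteq C_i$ be its preimage under $C_i\twoheadrightarrow Q_i$, so that $\im f_i\subseteq B_i$ (making $(B_i)$ a subtower of $(C_i)$ through which $f$ factors), with $C_i/B_i=Q_i/\bar B_i$ and $B_i/\im f_i=\bar B_i$. The input gives $\lim(C_i/B_i)=0$, whence $\lim B_i\to\lim C_i$ is an isomorphism (it is injective because $B_i\subseteq C_i$) and $\lim^1 B_i\to\lim^1 C_i$ is injective (the connecting map $\lim(C_i/B_i)\to\lim^1 B_i$ vanishes). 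Finally, since $A_i\to\im f_i$ is onto, $\lim^1 A_i\to\lim^1\im f_i$ is surjective, and since $\lim^1\bar B_i=\lim^1(B_i/\im f_i)=0$, the map $\lim^1\im f_i\to\lim^1 B_i$ is surjective as well; composing gives the required surjectivity of $\lim^1 A_i\to\lim^1 B_i$.

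I expect the only real point to watch is the bookkeeping in the six-term $\lim$--$\lim^1$ sequences, i.e.\ tracking which exactness statement supplies each of ``surjective'', ``injective'', and ``isomorphism'', together with the routine verification that $K_i$ and $\bar B_i$ genuinely form a subtower (respectively quotient) and that the factorizations are compatible with the bonding maps. The substantive content is carried entirely by the eventual-image lemma, so no delicate estimates are involved; the main conceptual step is recognizing that the ``$\ker$'' and ``$\coker$'' versions reduce to exactly the same Mittag--Leffler construction.
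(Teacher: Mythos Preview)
Your approach mirrors the paper's almost exactly: for (a) quotient by a subtower of $\ker f_i$, for (b) pull back a subtower of $\coker f_i$, then read off every conclusion from the six-term $\lim$/$\lim^1$ sequences. That is precisely what the paper does.

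The gap is in your ``standard fact.'' For an arbitrary tower $(T_i)$ of abelian groups, the eventual-image subtower $T_i'=\bigcap_{j\ge i}\im(T_j\to T_i)$ need \emph{not} satisfy $\lim^1 T_i'=0$. The bonding maps $T_{i+1}'\to T_i'$ need not be surjective: given $x\in T_i'$, each coset $(p^{i+1}_i)^{-1}(x)\cap\im(p^j_{i+1})$ is nonempty, but these nested cosets can have empty intersection. For a concrete failure, take for each $n$ the tower $T^{(n)}$ with $T^{(n)}_i=\Z$ for $i\le n$ and $T^{(n)}_i=\Z^2$ for $i>n$, bonding maps the identity on $\Z$, the projection $(a,b)\mapsto a$ at level $n{+}1\to n$, and $(a,b)\mapsto(a,3b+a)$ above; set $T_i=\bigoplus_n T^{(n)}_i$. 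One computes $(T^{(n)}_i)'\cong\Z$ with the induced map at level $n{+}1\to n$ equal to multiplication by $2$ and all others the identity, so $T_i'\cong\bigoplus_\N\Z$ with bonding map ``double the $i$th coordinate''; the sequence $y_i=e_i$ then represents a nonzero class in $\lim^1 T_i'$. What the paper uses instead is $L_i=\im\big(\lim_j T_j\to T_i\big)$: here the bonding maps $L_{i+1}\to L_i$ are surjective \emph{by construction} (the $(i{+}1)$st coordinate of any witnessing thread maps to the $i$th), so $\lim^1 L_i=0$ is immediate, while $\lim L_i\cong\lim T_i$ still holds and hence $\lim(T_i/L_i)=0$. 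With your $K_i$ in (a) and your $\bar B_i$ in (b) redefined via this image-of-$\lim$ construction, your argument goes through verbatim and coincides with the paper's.
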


\begin{proof}[Proof. (a)] Let $K_i=\ker f_i$.
Let $L_i$ be the image of $\lim K_j$ in $K_i$.
Then the inclusions $L_i\to K_i$ induce an isomorphism $\lim L_i\to\lim K_i$.
Also the bonding maps $L_{i+1}\to L_i$ are surjective, so $\lim^1 L_i=0$.
Writing $Q_i=K_i/L_i$, from the short exact sequence of towers $0\to L_i\to K_i\to Q_i\to 0$
we get that $\lim Q_i=0$.
Let $B_i=A_i/L_i$.
From the short exact sequence of towers $0\to L_i\to A_i\to B_i\to 0$
we get that the map $\lim A_i\to\lim B_i$ is surjective and the map $\lim^1 A_i\to\lim^1 B_i$ is an isomorphism.
Let $G_i=f_i(A_i)$.
Then $G_i\simeq A_i/K_i\simeq (A_i/L_i)/(K_i/L_i)=B_i/Q_i$.
From the short exact sequence of towers $0\to Q_i\to B_i\to G_i\to 0$
we get that the map $\lim B_i\to\lim G_i$ is injective.
Also the map $\lim G_i\to\lim C_i$ is injective since $G_i\subset C_i$.
\end{proof}

\begin{proof}[(b)] Let $G_i=f_i(A_i)$ and let $K_i=C_i/G_i$.
Let $L_i$ be the image of $\lim K_j$ in $K_i$.
Then the inclusions $L_i\to K_i$ induce an isomorphism $\lim L_i\to\lim K_i$.
Also the bonding maps $L_{i+1}\to L_i$ are surjective, so $\lim^1 L_i=0$.
Writing $Q_i=K_i/L_i$, from the short exact sequence of towers $0\to L_i\to K_i\to Q_i\to 0$
we get that $\lim Q_i=0$.
Let $B_i$ be the preimage of $L_i$ under the quotient map $C_i\to K_i$.
Then $L_i\simeq B_i/G_i$ and consequently $C_i/B_i\simeq (C_i/G_i)/(B_i/G_i)\simeq K_i/L_i=Q_i$.
The short exact sequence of towers $0\to B_i\to C_i\to Q_i\to 0$ yields that the map 
$\lim B_i\to\lim C_i$ is an isomorphism and the map $\lim^1 B_i\to\lim^1 C_i$ is injective.
Finally, the map $\lim^1 A_i\to\lim^1 G_i$ is surjective since $G_i=f_i(A_i)$, and from 
the short exact sequence of towers $0\to G_i\to B_i\to L_i\to 0$ the map
$\lim^1 G_i\to\lim^1 B_i$ is also surjective.
\end{proof}

\begin{proof}[Proof of Theorem \ref{factorization-main}] 
By Lemma \ref{level-factorization}(a) the level map $B_i\to C_i$ factors through a tower of abelian
groups $H_i$ such that $\lim H_i=0$. 
By Lemma \ref{level-factorization}(b) the composition $A_i\to B_i\to H_i$ factors through a tower of abelian
groups $G_i$ such that $\lim^1 G_i=0$ and the map $\lim G_i\to\lim H_i$ is an isomorphism.
\end{proof}

\begin{remark} It might seem that one can prove Lemma \ref{level-factorization}(b) by
letting each $B_i$ be the purification $\{g\in C_i\mid \exists n\in\mathbb Z: ng\in f_i(A_i)\}$
of $f_i(A_i)$, but this is not so.
Indeed, there exists an inverse sequence of embeddings $\dots\to C_1\to C_0$ between free abelian groups 
of rank 2, and an inverse sequence $\dots\to B_1\to B_0$ of their pure subgroups of rank $1$ 
such that the map $\lim^1 B_i\to\lim^1 C_i$ is not injective \cite{M00}*{Example \ref{book:lattice2}}.
(A subgroup $H$ of a group $G$ is called {\it pure} if for any $g\in G$ and $n\in\Z$ such that $ng\in H$
one has $g\in H$.)
\end{remark}

\subsection{Non-abelian and non-group cases}

\begin{remark} \label{level-factorization2}
In trying to prove the non-abelian version of Lemma \ref{level-factorization}(b) we may define 
the group $G_i$ and the pointed sets of left cosets $K_i$, $L_i$ and $B_i$ as in the proof 
of the abelian case; then the map $\lim^1 A_i\to\lim^1 G_i$ is still surjective.
{\it If each $B_i$ happens to be a subgroup of $C_i$,} then the map $\lim^1 G_i\to\lim^1 B_i$ is surjective
(see \cite{M00}*{Addendum \ref{book:6-term4}(a)}), and hence there is the following commutative diagram 
with exact rows (see \cite{M00}*{Theorem \ref{book:6-term3}}):
\[\begin{tikzcd}
1\rar &\lim G_i\rar\dar[equal] &\lim B_i\rar\dar[tail] &\lim L_i\rar\dar[tail, two heads] &\lim^1 G_i\rar\dar[equal] &\lim^1 B_i\rar\dar &1\\
1\rar &\lim G_i\rar &\lim C_i\rar &\lim K_i\rar &\lim^1 G_i\rar &\lim^1 C_i
\end{tikzcd}\]
Then by an easy diagram chasing (or by a version of the 5-lemma) $\lim^1 B_i\to\lim^1 C_i$ is injective 
and $\lim B_i\to\lim C_i$ is surjective.
\end{remark}

\begin{lemma} \label{level-factorization3} 
Let $f=(f_i\:A_i\to C_i)_{i\in\N}$ be a level map between towers of groups where the tower 
$(A_i)_{i\in\N}$ satisfies the Mittag-Leffler condition.
Then $f$ factors through a tower of quotient groups $B_i$ of $A_i$ such that the map 
$\lim A_i\to\lim B_i$ is surjective and the map $\lim B_i\to\lim C_i$ is injective.
Also, $\lim^1 B_i$ is trivial.
\end{lemma}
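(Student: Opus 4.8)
The plan is to imitate the proof of Lemma \ref{level-factorization}(a), but to replace the subgroup used there by its normal closure in $A_i$, invoking the Mittag-Leffler hypothesis at two separate points. First I would set $K_i=\ker f_i$, a normal subgroup of $A_i$, and let $L_i$ be the image of $\lim_j K_j$ in $K_i$. Exactly as in the abelian case, every element of $L_i$ extends to a thread of $\lim_j K_j$, which already supplies a preimage in $L_{i+1}$; hence the bonding maps $L_{i+1}\to L_i$ are surjective, so $\lim^1 L_i=*$, and the inclusions $L_i\to K_i$ induce an isomorphism $\lim L_i\xrightarrow{\sim}\lim K_i$. All of this is valid without any restriction on the groups.

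The obstacle that has no counterpart in the abelian case is that $L_i$ need not be normal in $A_i$. What is true is that $L_i$ is normalized by the subgroup $\bar A_i\bydef\im(\lim_j A_j\to A_i)$: any $a\in\bar A_i$ lifts, by definition, to a thread $\alpha\in\lim_j A_j$, and conjugating a thread of $\lim_j K_j$ by $\alpha$ yields another such thread (each $K_j$ being normal in $A_j$), whose $i$-th coordinate is $a\ell a^{-1}$. Thus $L_i\trianglelefteq\bar A_i$ and its normal closure in $\bar A_i$ equals $L_i$. Now I would set $N_i\bydef\langle\langle L_i\rangle\rangle_{A_i}$ and $B_i\bydef A_i/N_i$; since $L_i\subseteq K_i$ and $K_i$ is normal, $(N_i)$ is a subtower of $(A_i)$ lying inside $(K_i)$, so $f$ factors through the quotient tower $(B_i)$. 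The key computation is that for $j\ge i$ the image of $N_j\to N_i$ is the normal closure of $L_i$ inside $\im(A_j\to A_i)$, because $L_j\to L_i$ is onto. Here the Mittag-Leffler condition enters for the first time: it gives $\im(A_j\to A_i)=\bar A_i$ for $j$ large, so these images stabilize to $\langle\langle L_i\rangle\rangle_{\bar A_i}=L_i$. Hence the tower $(N_i)$ is itself Mittag-Leffler, and therefore $\lim^1 N_i=*$.

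With $\lim^1 N_i=*$ established, the three assertions follow formally from the non-abelian six-term exact sequences of \cite{M00}*{Theorem \ref{book:6-term3}}. Applying it to $1\to N_i\to A_i\to B_i\to1$, the vanishing of $\lim^1 N_i$ makes the connecting map $\lim B_i\to\lim^1 N_i$ trivial, so $\lim A_i\to\lim B_i$ is surjective. Applying it to $1\to N_i\to K_i\to K_i/N_i\to1$, the same vanishing makes $\lim K_i\to\lim(K_i/N_i)$ surjective; but $\lim K_i=\lim L_i$ maps to trivial cosets because $L_i\subseteq N_i$, so $\lim(K_i/N_i)=*$, and then $1\to K_i/N_i\to B_i\to f_i(A_i)\to1$ together with $f_i(A_i)\subseteq C_i$ shows that $\lim B_i\to\lim C_i$ is injective. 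Finally, the Mittag-Leffler hypothesis enters a second time through the fact that it forces $\lim^1 A_i=*$; feeding this into the tail $\lim^1 A_i\to\lim^1 B_i\to*$ of the first six-term sequence, whose last map is onto, gives $\lim^1 B_i=*$.

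I expect the main obstacle to be precisely the normality issue just described: one cannot simply form $A_i/L_i$, and passing to the normal closure $N_i=\langle\langle L_i\rangle\rangle_{A_i}$ is the device that repairs it. The delicate point to verify is that enlarging $L_i$ to $N_i$ does not spoil the vanishing of $\lim^1$, and this is exactly where the Mittag-Leffler condition is essential, namely in identifying the stable images of $(N_i)$ with $L_i$. This step has no analogue in Lemma \ref{level-factorization}(a) and, as the parenthetical remark at the end of Remark \ref{level-factorization2} suggests, is the reason the hypothesis cannot be removed.
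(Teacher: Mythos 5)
Your proposal is correct and takes essentially the same route as the paper's proof: the same $L_i=\im(\lim_j K_j\to K_i)$, the same normal closure $N_i=\langle\langle L_i\rangle\rangle_{A_i}$ with quotient $B_i=A_i/N_i$, the Mittag-Leffler property of $(N_i)$ giving $\lim^1 N_i=1$, and the three conclusions extracted from the same exact sequences. Your explicit computation that the stable images of $(N_i)$ equal $L_i$ (via the observation that $L_i$ is normalized by $\im(\lim_j A_j\to A_i)$) is exactly the verification that the paper compresses into ``it is easy to see'' that $(N_i)$ satisfies the Mittag-Leffler condition.
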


\begin{proof} Let $K_i=\ker f_i$.
Let $L_i$ be the image of $\lim K_j$ in $K_i$.
Then the inclusions $L_i\to K_i$ induce an isomorphism $\lim L_i\to\lim K_i$.
Let $N_i$ be the normal closure of $L_i$ in $A_i$.
It is easy to see that the bonding map $A_{i+1}\to A_i$ sends $N_{i+1}$ into $N_i$.
Since $K_i$ is normal in $A_i$, we have $N_i\subset K_i$.
Since $L_i$ and $A_i$ satisfy the Mittag-Leffler condition, it is easy to see that so does $N_i$.
Hence $\lim^1 N_i=1$.
The inclusions $L_i\subset N_i\subset K_i$ induce monomorphisms 
$\lim L_i\to\lim N_i\to\lim K_i$, which must be isomorphisms since so is their composition.

Let $B_i=A_i/N_i$.
From the short exact sequence of towers $1\to N_i\to A_i\to B_i\to 1$ we get that the map
$\lim A_i\to\lim B_i$ is surjective and that $\lim^1 B_i$ is trivial.
Let $Q_i=K_i/N_i$.
From the short exact sequence of towers $1\to N_i\to K_i\to Q_i\to 1$ we get that $\lim Q_i=1$.
Since $N_i$ is normal in $A_i$, it is normal in $K_i$, and so $Q_i$ is a group.
Since $N_i$ and $K_i$ are normal in $A_i$, it is easy to see that $Q_i$ is normal in $B_i$.
Let $G_i=f_i(A_i)$, and let us note that $G_i\simeq A_i/K_i\simeq (A_i/N_i)/(K_i/N_i)=B_i/Q_i$.
From the short exact sequence of towers $1\to Q_i\to B_i\to G_i\to 1$
we get that the map $\lim B_i\to\lim G_i$ is injective.
Also the map $\lim G_i\to\lim C_i$ is injective since $G_i\subset C_i$.
\end{proof}

\begin{lemma} \label{level-factorization4} 
Let $(f_i\:A_i\to C_i)_{i\in\N}$ be a level map between towers of finite sets, and let $B_i=f_i(A_i)$.
Then the map $\lim A_i\to\lim B_i$ is surjective and the map $\lim B_i\to\lim C_i$ is injective.
\end{lemma}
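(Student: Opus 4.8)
The plan is to treat the two assertions separately; the injectivity of $\lim B_i\to\lim C_i$ is formal, while the surjectivity of $\lim A_i\to\lim B_i$ is where the finiteness hypothesis does the real work. For injectivity I would argue directly: since $B_i=f_i(A_i)\subseteq C_i$, the level map $B_i\to C_i$ is simply the inclusion. Two threads of $\lim B_i$ that have the same image in $\lim C_i$ therefore agree in every $C_i$, hence in every $B_i\subseteq C_i$, hence are equal as threads. (Equivalently, $\lim$ is left exact and so preserves the levelwise monomorphisms $B_i\hookrightarrow C_i$.) This step uses nothing about finiteness.

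For surjectivity I would first note that each restriction $f_i\:A_i\to B_i$ is onto by the very definition $B_i=f_i(A_i)$. Fix a thread $b=(b_i)\in\lim B_i$ and form the fiber tower $A_i^{(b)}\bydef f_i^{-1}(b_i)\subseteq A_i$. Each $A_i^{(b)}$ is a nonempty finite set, nonempty because $f_i$ is onto $B_i$. The bonding map $p_i\:A_{i+1}\to A_i$ of the tower $(A_i)$ restricts to a map $A_{i+1}^{(b)}\to A_i^{(b)}$: if $a\in f_{i+1}^{-1}(b_{i+1})$, then, since $f$ is a level map, $f_i\big(p_i(a)\big)$ is the bonding image of $b_{i+1}$, which equals $b_i$ because $b\in\lim B_i$; hence $p_i(a)\in A_i^{(b)}$. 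Thus $\big(A_i^{(b)}\big)$ is a genuine tower of nonempty finite sets, and a thread of it is precisely a preimage of $b$ in $\lim A_i$.

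The crux is then the standard fact that the inverse limit of a tower of nonempty finite sets is nonempty, and this is the one place where finiteness is essential. I would establish it by passing to stable images: the subsets $\bigcap_k\im\big(A_{i+k}^{(b)}\to A_i^{(b)}\big)$ form a tower of nonempty finite sets whose bonding maps are surjective, the intersection stabilizing because the images form a decreasing chain of finite subsets of $A_i^{(b)}$ (this is exactly the Mittag-Leffler property, automatic for towers of finite sets). A tower with surjective bonding maps and nonempty terms has nonempty inverse limit by an immediate inductive choice, and such a thread is also a thread of $\big(A_i^{(b)}\big)$, hence a preimage of $b$. This produces the required element of $\lim A_i$ mapping to $b$ and so gives surjectivity of $\lim A_i\to\lim B_i$. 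I expect no genuine obstacle here beyond recording the stabilization argument carefully; the only subtlety is that, without finiteness, surjectivity can fail even though each $f_i$ is onto, so the hypothesis must be used exactly at the nonemptiness step.
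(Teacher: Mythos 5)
Your proposal is correct and follows essentially the same route as the paper's own proof: injectivity of $\lim B_i\to\lim C_i$ from the levelwise inclusions, and surjectivity of $\lim A_i\to\lim B_i$ by forming the fiber tower $f_i^{-1}(b_i)$ of nonempty finite sets, passing to stable images (the Mittag-Leffler property, automatic by finiteness), and extracting a thread by inductive choice along the resulting surjective tower. Your explicit check that the bonding maps restrict to the fibers is a detail the paper leaves implicit, but the argument is the same.
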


\begin{proof}
The map $\lim B_i\to\lim C_i$ is injective since each map $B_i\to C_i$ is injective.
It remains to show that the map $\lim A_i\to\lim B_i$ is surjective.
Given a thread $(b_i)\in\lim B_i$, each $K_i\bydef f_i^{-1}(b_i)$ is nonempty.
Since the sets $K_i$ are finite, the tower $(K_i)$ satisfies the Mittag-Leffler condition.
Thus for each $i$ there exists a $j_i$ such that the image $K'_i$ of $K_j$ in $K_i$ does not 
depend on $j$ for all $j\ge j_i$.
If $j=\max(j_i,j_{i+1})$, then $K'_i=\im(K_j\to K_i)=\im\big(\im(K_j\to K_{i+1})\to K_i\big)=\im(K'_{i+1}\to K_i)$.
Thus the map $K'_{i+1}\to K'_i$ is surjective.
Pick any $a_0\in K'_0$.
Then pick any $a_1$ in the preimage of $a_0$ in $K'_1$.
Continuing in this fashion, we obtain a thread $(a_i)\in\lim K'_i$.
Then $(a_i)\in\lim A_i$ and $f_i(a_i)=b_i$ for each $i$.
\end{proof}

\section{Ind-groups versus colimit}

This section contains the proof of Theorem \ref{bounded-colim}.

\subsection{The case of lim}

\begin{proof}[Proof of Theorem \ref{bounded-colim}(a)] We have $\Gamma_j=\lim_i G_{ij}$.
Suppose that $\colim\Gamma_j=0$, but there exists a $k$ such that $\Gamma_k$
has a nontrivial image in each $\Gamma_j$.
Upon deleting some columns we may assume that $k=0$.
Let $K_j=\ker(\Gamma_0\to\Gamma_j)$.
Then $\Gamma_0=\bigcup_{j=1}^\infty K_j$, but $\Gamma_0\ne K_j$ for any $j$.
Let $H_i=\im(\Gamma_0\to G_{i0})$.
For any fixed $i$ we have $H_i=\bigcup_{j=1}^\infty L_{ij}$, where $L_{ij}$
is the image of $K_j$ in $G_{i0}$.
Since $G_{i0}$ is finitely generated and abelian, so is $H_i$.
Hence all generators of $H_i$ lie in some $L_{ij}$.%
\footnote{Let us note that this cannot be achieved in the non-abelian case.
Indeed, the Baumslag--Solitar group $\left<x,t\mid t^{-1}x^2t=x^3\right>$ is non-Hopfian (see \cite{MKS}*{\S4.4, p.~260})
and consequently (see \cite{MKS}*{Exercise 2.4.18, p.~114}) contains a strictly ascending chain of normal subgroups 
$K_0\subsetneq K_1\subsetneq\dots$.}
Thus for each $i$ there exists a $j$ such that $K_j$ maps onto $H_i$.

Let $g^1\in\Gamma_0\but K_1$.
Since the image $h^1$ of $g^1$ in $\Gamma_1$ is nontrivial, the image of $h^1$ in some $G_{i1}$ 
is nontrivial.
Upon deleting some rows we may assume that $i=1$.
Thus $g^1$ has nontrivial image in $G_{11}$ and hence also in $G_{10}$.
By the above some $K_j$ contains $g^1$ and maps onto $H_1$.
Since $g^1\notin K_1$, we have $j\ge 2$, and then upon deleting some columns we may assume that $j=2$.
Let us pick any $a\in\Gamma_0\but K_2$.
Since $K_2$ maps onto $H_1$, some $b\in K_2$ has the same image in $H_1$ as $a$.
Then $g^2\bydef a-b$ lies in $\Gamma_0\but K_2$ and has trivial image in $G_{10}$.
Since the image $h^2$ of $g^2$ in $\Gamma_2$ is nontrivial,
the image of $h^2$ in $G_{i2}$ is nontrivial for some $i$.
We may assume that $i>1$, and then upon deleting some rows
we may assume that $i=2$.
Thus $g^2$ has nontrivial image in $G_{22}$ and hence also in $G_{20}$.
By the above some $K_j$ contains $g^2$ and maps onto $H_2$.
Since $g^2\notin K_2$, we have $j\ge 3$, and then upon deleting some columns 
we may assume that $j=3$.
By continuing in the same fashion, we will construct, upon passing to cofinal inverse and
direct sequences in both columns and rows, a sequence of elements $g^k\in K_{k+1}\but K_k$ 
such that the image of each $g^k$ in $G_{kk}$ is nontrivial, but the image of each $g^{k+1}$ in 
$G_{k0}$ is trivial.

Let $g^k_i$ denote the image of $g^k$ in $G_{i0}$.
Thus $g^k_i=0$ for all $k>i$.
Hence the elements $\gamma_i\bydef g^1_i+g^2_i+\dots+g^i_i\in G_{i0}$ form a thread 
$\gamma\bydef (\gamma_i)\in\Gamma_0$.%
\footnote{Let us note that $\gamma$ is the infinite sum $g^1+g^2+\dots$, in the sense that
it is the limit of the finite sums $g^1+\dots+g^i$ in the topology of the inverse limit 
of discrete groups.}
Since $g^k\in K_i$ for each $k<i$, the image of $g^k$ in $\Gamma_i$, and hence also in
$G_{ii}$ is trivial for $k<i$.
In other words, the image of $g^k_i$ in $G_{ii}$ is trivial for all $k<i$.
Hence the image of $\gamma_i$ in $G_{ii}$ equals that of $g^i_i$.
The latter is the same as the image of $g^i$ in $G_{ii}$, which is known to be nontrivial.
Thus $\gamma_i$ has nontrivial image in $G_{ii}$.
This is the same as the image of $\gamma$ in $G_{ii}$.
Then $\gamma$ has nontrivial image in $\Gamma_i$.
But then $\gamma\notin K_i$ for all $i$, contradicting $\Gamma_0=\bigcup_{i=1}^\infty K_i$.
\end{proof}

\begin{proof}[Proof of Theorem \ref{bounded-colim}(b)] We have $\Gamma_j=\lim_i G_{ij}$.
By the hypothesis $\lim^1_i G_{ij}=1$ for each $j$.
Since the $G_{ij}$ are countable, the tower $\dots\to G_{1j}\to G_{0j}$ satisfies the Mittag-Leffler condition for each $j$
(see \cite{M00}*{Theorem \ref{book:gray}(b)}).
In other words, for each $i$ and $j$ there exists a $k_{ij}$ such that
$H_{ij}\bydef \im(G_{k_{ij},j}\to G_{ij})$ equals $\im(G_{lj}\to G_{ij})$ for all $l\ge k_{ij}$.
It is easy to see that $H_{ij}=\im(H_{k_{ij},j}\to G_{ij})$, and it follows that
$H_{ij}=\im(\Gamma_j\to G_{ij})$.
Let $x_1,\dots,x_r$ be a set of generators of $G_{k_{ij},j}$, and let $g_1,\dots,g_r$
be their images in $G_{ij}$.
Then each $g_l\in H_{ij}$, so there exists a $\gamma_l\in\Gamma_j$ that maps onto $g_l$.
Since $\colim\Gamma_j=1$, there exists an $m_l$ such that $\gamma_l$ maps trivially to $\Gamma_{m_l}$.
Let $m_{ij}=\max(m_1,\dots,m_r)$.
Then each $\gamma_l$ maps trivially to $\Gamma_{m_{ij}}$.
Since $H_{ij}$ is generated by $g_1,\dots,g_r$, every $g\in H_{ij}$ is the image of some
$\gamma\in\Gamma_j$ that maps trivially to $\Gamma_{m_{ij}}$.
Using this, the proof of (a) now applies.
\end{proof}

\subsection{The case of lim$^1$: A warm-up}

\begin{proposition} \label{bounded-colim2}
Theorem \ref{bounded-colim}(c) holds if all vertical maps in the diagram are injective.
\end{proposition}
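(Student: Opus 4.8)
The plan is to first replace each column by the completion picture that injectivity makes available. For a tower $\dots\to G_{1j}\to G_{0j}$ with injective bonding maps we may identify each $G_{ij}$ with a subgroup $V_{ij}\subseteq A_j\bydef G_{0j}$, so that $A_j=V_{0j}\supseteq V_{1j}\supseteq\dots$ is a decreasing filtration. Applying the six-term exact $\lim$--$\lim^1$ sequence (see \cite{M00}*{Theorem \ref{book:6-term3}}) to the short exact sequence of towers $0\to(V_{ij})\to(A_j)\to(A_j/V_{ij})\to 0$, with $(A_j)$ the constant tower, yields a natural identification $\Gamma_j=\lim^1_i G_{ij}\cong\coker\big(A_j\to\hat A_j\big)$, where $\hat A_j\bydef\lim_i A_j/V_{ij}$ is the completion of $A_j$ in the filtration topology. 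The horizontal maps $G_{ij}\to G_{i,j+1}$ carry $V_{ij}$ into $V_{i,j+1}$, hence induce filtered homomorphisms $A_j\to A_{j+1}$, completions $\hat A_j\to\hat A_{j+1}$, and the maps $\Gamma_j\to\Gamma_{j+1}$. In this language an element $\xi\in\hat A_j$ dies in $\Gamma_k$ precisely when its image in $\hat A_k$ lands in the image of the original group $A_k$.

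Second, I would set up the contradiction exactly as in the proof of (a). Assuming the conclusion fails, then after deleting some columns we may take $j=0$, so that $\colim_j\Gamma_j$ is trivial while $\Gamma_0\to\Gamma_j$ is nontrivial for every $j$. Putting $K_j=\ker(\Gamma_0\to\Gamma_j)$, triviality of the colimit gives $\Gamma_0=\bigcup_j K_j$, while nontriviality of each map gives $K_j\subsetneq\Gamma_0$. The goal is then to manufacture a single class $[\xi]\in\Gamma_0$ that lies in no $K_j$; this contradicts $\Gamma_0=\bigcup_j K_j$.

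The construction of $\xi$ is the ``summation of an infinite series'' alluded to in the introduction, and here injectivity is exactly what makes the series converge: partial sums of elements lying progressively deeper in the filtration are Cauchy and hence have a limit in each completion $\hat A_j$. This is where the third parameter enters. I would inductively choose classes $g^k\in\Gamma_0$, each vanishing in some $\Gamma_j$ (as every element of $\Gamma_0=\bigcup_j K_j$ must), represented by elements of $\hat A_0$ lying in $\ker(\hat A_0\to A_0/V_{i_k,0})$ for a rapidly increasing sequence $i_k$, so that $\xi\bydef\sum_k g^k$ converges in $\hat A_0$ and, because the horizontal maps are filtered, simultaneously in every $\hat A_j$. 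The elements $g^k$ and the thresholds $i_k$ are to be chosen along a diagonal, together with a sequence of columns $j_k\to\infty$, so arranged that at column $j_k$ a single summand contributes a class surviving modulo $A_{j_k}$ while all later summands have their column-$j_k$ images buried deep enough in the filtration to leave that surviving class uncancelled. This forces the image of $\xi$ in $\hat A_j$ to fall outside $A_j$ for every $j$, i.e.\ $[\xi]\notin K_j$ for all $j$.

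The main obstacle is precisely this absence of cancellation, which in (a) came for free: there the finite generation of each $G_{i0}$ let a single kernel $K_j$ surject onto a whole row and made the diagonal sum a genuine thread, with only finitely many nonzero entries in each spot. Here there is no finite generation, so I cannot collapse a whole row onto one kernel; instead I must exploit injectivity to run the argument inside the completions, controlling the tail of the series so that at each column only finitely many summands are active modulo $A_j$. Verifying that the surviving contribution at column $j_k$ genuinely represents a nontrivial element of $\Gamma_{j_k}=\coker\big(A_{j_k}\to\hat A_{j_k}\big)$, and is not retroactively trivialized by the convergent tail, is the delicate bookkeeping step; it is exactly this three-parameter interplay between the summand index $k$, the filtration depth $i$, and the column $j$ that the proof of part (d) later upgrades, with a fourth parameter, to the general case without injectivity.
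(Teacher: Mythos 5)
Your first two steps coincide with the paper's own proof: injectivity gives, for each $j$, an exact sequence $0\to\lim_i G_{ij}\to G_{0j}\to\Lambda_j\to\Gamma_j\to 0$ with $\Lambda_j=\lim_i G_{0j}/G_{ij}$ (your completion $\hat A_j$), the contradiction hypothesis produces classes $g^k\in\Gamma_0$ dying in $\Gamma_j$ for $j>k$ and surviving for $j\le k$, and normalizing lifts $\lambda^k\in\Lambda_0$ by elements of $\im G_{00}$ (so that $q^k_i=0$ for $i\le k$) makes the infinite sum converge coordinatewise. The gap is in your final step, and the mechanism you propose for it cannot work. You want to choose summands and depths so that the tail is ``buried deep enough in the filtration to leave the surviving class uncancelled''. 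But filtration depth has no bearing on membership in $\im(A_j\to\hat A_j)$: since $A_j$ surjects onto every finite quotient $A_j/V_{ij}$, its image is \emph{dense} in $\hat A_j$, so the coset of any filtration-deep subgroup through any point always meets $\im A_j$. Moreover, at column $j$ \emph{all} summands $g^m$ with $m\ge j$ are individually nontrivial in $\Gamma_j$, not just one; and even if each tail term were individually trivial mod $A_j$, an infinite sum of elements of $\im A_j$ need not lie in $\im A_j$ --- the failure of $A_j$ to be complete is precisely why $\Gamma_j\ne 0$. So no amount of depth bookkeeping decides the class of your $\xi$ in $\Gamma_j$, and an induction over columns is hopeless anyway, since every later choice of summand perturbs every earlier column.

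The idea you are missing, and the one the paper uses to close exactly this step, is to build not one sum but uncountably many, and to detect nontriviality at a single \emph{finite} coordinate, where tails vanish identically. For $S\subset\N$ put $\mu^S=\sum_{k\in S}\lambda^k\in\Lambda_0$. After deleting rows one arranges $q^k_{k+1,j}\ne 0$ for all $j\le k$; then $\mu^S_j\ne\mu^T_j$ in $\Lambda_j$ whenever $S\but[j]\ne T\but[j]$, because at the smallest $n\ge j$ in $S\vartriangle T$ the two finite sums in the coordinate $Q_{n+1,j}$ differ by exactly the one nonzero term $q^n_{n+1,j}$. Now take (axiom of choice) a set $U$ of representatives of the equivalence relation ``$S\vartriangle T$ is finite''; $U$ is uncountable and the elements $\mu^S_j$, $S\in U$, are pairwise distinct in each $\Lambda_j$. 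Since each $G_{0j}$ is countable, only countably many $S\in U$ can have $\mu^S_j\in\im G_{0j}$ for some $j$; hence some $S$ avoids $\im G_{0j}$ for every $j$ simultaneously, and the class of $\mu^S$ in $\Gamma_0$ survives in every $\Gamma_j$, contradicting $\colim_j\Gamma_j=0$. This cardinality argument, not depth control, is the engine of the proof; it is also exactly where the countability hypothesis on the $G_{ij}$ enters.
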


The proof works without significant changes in the non-abelian case.

\begin{proof} 
We have $\Gamma_j=\lim^1_i G_{ij}$.
Suppose that $\colim\Gamma_j=0$, but there exists a $j$ such that $\Gamma_j$
has a nontrivial image in each $\Gamma_k$.
Upon deleting some columns we may assume that $j=0$.
Upon deleting some more columns we may assume that there exist $g^k\in\Gamma_0$, 
$k=0,1,\dots$, such that the image $g^k_j$ of $g^k$ in $\Gamma_j$
is nontrivial for $j\le k$ and trivial for $j>k$.

Since the vertical maps are injective, for each $j$ we have a short exact sequence of
inverse sequences $0\to G_{ij}\to G_{0j}\to Q_{ij}\to 0$, where $Q_{ij}=G_{0j}/G_{ij}$.
This yields an exact sequence $0\to\lim_i G_{ij}\to G_{0j}\to\Lambda_j\to\Gamma_j\to 0$, where 
$\Lambda_j=\lim_i Q_{ij}$.
Thus each $g^k\in\Gamma_0$ is the image of some $\lambda^k\in\Lambda_0$.
Let $q^k_i$ be the image of $\lambda^k$ in $Q_{i0}$.
Each $q^k_k$ is the image of some $r_k\in G_{00}$.
By subtracting from $\lambda^k$ the image of $r_k$ in $\Lambda_0$ 
we may assume that $q^k_k=0$ and hence also $q^k_i=0$ for all $i\le k$.
Let $\lambda^k_j$ be the image of $\lambda^k$ in $\Lambda_j$, and let
$q^k_{ij}$ be the image of $\lambda^k_j$ (and of $q^k_i$) in $Q_{ij}$.
Then $q^k_{ij}=0$ for all $i\le k$ and all $j$.
On the other hand, $\lambda^k_k$ is nonzero, since it maps onto $g^k_k$, 
which is known to be nonzero.
Hence there exists an $i$ such that $q^k_{ik}\ne 0$.
By the above $i>k$, and then upon deleting some rows (and hence also the corresponding columns, 
to keep the correspondence between columns and rows) we may assume that $i=k+1$, that is, 
$q^k_{k+1,k}\ne 1$.
Then also $q^k_{k+1,j}\ne 1$ for all $j\le k$.

Given a subset $S\subset\N$, let $\mu^S\in\Lambda_0$ be the infinite sum $\sum_{k\in S}\lambda^k$, 
that is, the limit of the finite sums $\sum_{k\in S\cap[n]}\lambda^k$ in the topology of 
the inverse limit of discrete groups, where $[n]=\{0,\dots,n-1\}$.
In other words, $\mu^S$ is the thread consisting of the sums $\sigma_i^S\bydef \sum_{k\in S}q^k_i$, 
which are in fact finite sums $\sigma_i^S=\sum_{k\in S\cap[i]}q^k_i$.
Next let $\mu^S_j$ be the image of $\mu^S$ in $\Lambda_j$.
Then $\mu_j^S$ is the thread consisting of the sums
$\sigma_{ij}^S\bydef \sum_{k\in S}q^k_{ij}$, which are in fact finite sums
$\sigma_{ij}^S=\sum_{k\in S\cap[i]}q^k_{ij}$.
Let us note that $\mu^S_j\ne\mu^T_j$ as long as $S\but[j]\ne T\but[j]$.
Indeed, if $n$ is the smallest element of the symmetric difference $S\vartriangle T$ such that $n\ge j$, 
then the sums $\sigma_{n+1,j}^S=\sum_{k\in S\cap[n+1]}q^k_{n+1,j}$ and 
$\sigma_{n+1,j}^T=\sum_{k\in T\cap[n+1]}q^k_{n+1,j}$ 
differ by precisely one summand, $q^n_{n+1,j}$, which is nonzero by the above.

Thus if $S\vartriangle T$ is infinite, then $\mu^S_j\ne\mu^T_j$ for all $j$.
The relation ``$S\vartriangle T$ is finite'' is an equivalence relation on subsets of $\N$,
and every its equivalence class is countable.
By the axiom of choice there exists a set $U$ of subsets of $\N$ which contains precisely one
representative of each equivalence class.
Then $U$ is uncountable, and for any distinct $S,T\in U$ we have $\mu^S_j\ne\mu^T_j$ for all $j$.
Hence $\{\mu^S_j\mid S\in U\}$ is an uncountable subset of $\Lambda_j$ for each $j$.
Since $G_{0j}$ is countable, so is $V_j\bydef \{S\in U\mid\mu^S_j\in\im G_{0j}\}$ .
Then $V\bydef \bigcup_{j=0}^\infty V_j$ is countable as well.
Hence $U\but V$ is uncountable, and in particular nonempty.
Let us pick some $S\in U\but V$.
Then $\mu^S_j\notin\im G_{0j}$ for all $j$.
Hence the image $\gamma_j$ of $\mu^S_j$ in $\Gamma_j$ is nontrivial for all $j$.
Thus we have found an element $\gamma_0\in\Gamma_0$ which has nontrivial image (namely, $\gamma_j$)
in each $\Gamma_j$.
Hence $\colim\Gamma_j\ne 0$, which is a contradiction.
\end{proof}

\subsection{The case of lim$^1$: The main part}

\begin{proof}[Proof of Theorem \ref{bounded-colim}(d)] 
To simplify notation we will assume that each $G_{ij}$ is abelian, but the proof works without significant changes 
in the non-abelian case.
We have $\Gamma_j=\lim_i\Gamma_{ij}$, where $\Gamma_{ij}=\lim^1_l G_{ij}^{(l)}$.
Suppose that $\colim\Gamma_j=0$, but there exists a $j$ such that $\Gamma_j$ has a nontrivial image in each $\Gamma_k$.
Upon deleting some columns we may assume that $j=0$.
Upon deleting some more columns we may assume that there exist $g^k\in\Gamma_0$, $k=0,1,\dots$, 
such that the image $g^k_j$ of $g^k$ in $\Gamma_j$ is nonzero for $j\le k$ and zero for $j>k$.
Let $g^k_{ij}$ be the image of $g^k_j$ in $\Gamma_{ij}$ and let $J_{ij}=\{k\in\N\mid g^k_{ij}\ne 0\}$.
Since $g^k_j=0$ for $k<j$, we have $J_{ij}\subset\N\but[j]$, where $[j]=\{0,\dots,j-1\}$.
Also, since $g^k_{ij}$ is the image of $g^k_{lj}$ for $i\le l$, we have $J_{ij}\subset J_{lj}$ if $i\le l$.

Suppose that there exist an $i_0$ and a $j$ such that $J_{i_0j}$ is infinite.
Upon deleting some columns we may assume that $j=0$.
Since $g^0_0\ne 0$, there exists an $i$ such that $g^0_{i0}\ne 0$.
Since $J_{ij}\subset J_{lj}$ if $i\le l$, we may assume that $i_0\ge i$.
Thus $J_{i_00}$ is infinite and contains $0$.
Upon deleting the $k$th column (and hence forgetting $g^k$) for all $k\notin J_{i_00}$ 
we may assume that $J_{i_00}=\N$ (that is, $g^k_{i_00}\ne 0$ for all $k$).
Next, suppose that (after the said columns have been deleted) there exist an $i_1$ and $j>0$ 
such that $J_{i_1j}$ is infinite.
Upon deleting some columns we may assume that $j=1$.
Since $g^1_1\ne 0$, there exists an $i$ such that $g^1_{i1}\ne 0$.
Since $J_{ij}\subset J_{lj}$ if $i\le l$, we may assume that $i_1\ge\max(i_0+1,i)$.
Thus $J_{i_11}$ is infinite and contains $1$.
(As noted above, $J_{i_11}$ lies in $\N\but[1]$, i.e.\ does not contain $0$.)
Upon deleting the $k$th column (and hence forgetting $g^k$) for all $k\in(\N\but[1])\but J_{i_11}$ 
we may assume that $J_{i_11}=\N\but[1]$ (that is, $g^k_{i_11}\ne 0$ for all $k\ge 1$).
By proceeding in the same fashion we will eventually construct (upon deleting some columns) 
either an infinite sequence $i_0,i_1,\dots$ such that each $J_{i_k,k}=\N\but[k]$ or
a finite sequence $i_0,\dots,i_n$ such that $J_{ij}$ is finite for all $j>n$ and all $i$.
In the former case we may assume, upon deleting some rows, that each $i_k=k$; and in 
the latter case we may assume, upon deleting some columns, that $J_{ij}$ is finite for all $i$ and $j$.

To summarize, we have achieved (upon deleting some columns and possibly rows) that either of 
the following holds:
\begin{itemize}
\item Case I. $J_{ij}$ is finite for all $i$ and $j$;
\item Case II. $J_{jj}=\N\but[j]$ for all $j$.
\end{itemize}
The proof in these two cases will modeled on the proofs of (a) and (b), respectively.

{\it Case I.}
Since $g^0_0\ne 0$, there exists an $i$ such that $g^0_{i0}\ne 0$.
Upon deleting some rows we may assume that $g^0_{00}\ne 0$.
On the other hand, by deleting the $k$th column (and hence forgetting $g^k$) for all $k\in J_{00}\but\{0\}$
we will get that $g^k_{00}=0$ for all $k\ge 1$.
Thus $J_{00}=\{0\}$.
Next, since $g^0_1=0$, we have $g^0_{i1}=0$ for all $i$; but since $g^1_1\ne 0$, there exists an $i$ 
such that $g^1_{i1}\ne 0$.
We may assume that $i\ge 1$, and then upon deleting some rows we may assume that $g^1_{11}\ne 0$.
On the other hand, by deleting the $k$th column (and hence forgetting $g^k$) for all $k\in J_{11}\but\{1\}$
we will get that $g^k_{11}=0$ for all $k\ge 2$.
Thus $J_{11}=\{1\}$.
By proceeding in this fashion we will eventually obtain (upon deleting some columns and rows) that 
$J_{jj}=\{j\}$ for all $j$.

Now we have $g^j_{jj}\ne 0$ for all $j$ and $g^k_{jj}=0$ for all $k>j$.
Let $\gamma\in\Gamma_0$ be the infinite sum $\sum_{k\in\N}g^k$, that is, the limit of the finite sums 
$\sum_{k\in [n]}g^k$ in the topology of the inverse limit of discrete groups.
Thus $\gamma$ is the thread consisting of the sums $\sigma_i\bydef \sum_{k\in\N}g^k_{i0}$, 
which are in fact finite sums $\sigma_i=\sum_{k\in J_{i0}}g^k_{i0}$.
The image $\gamma_j$ of $\gamma$ in $\Gamma_j$ is the thread consisting of the sums 
$\sigma_{ij}\bydef \sum_{k\in\N}g^k_{ij}$, which are in fact finite sums 
$\sigma_{ij}=\sum_{k\in J_{ij}}g^k_{ij}$.
Since $J_{jj}=\{j\}$ for each $j$, we have $\sigma_{jj}=g^j_{jj}\ne 0$ for each $j$.
Hence $\gamma_j\ne 0$ for each $j$.
Thus we have found an element $\gamma\in\Gamma_0$ which has nonzero image (namely, $\gamma_j$)
in each $\Gamma_j$.
Hence $\colim\Gamma_j\ne 0$, which is a contradiction.

{\it Case II.}
Let $\Delta_j=\lim^1_i G_{ij}$.
Since the map $\Delta_0\to\Gamma_0$ is surjective by the Mittag-Leffler short exact sequence
(see \cite{M00}*{Theorem \ref{book:mles2}}), some $h^k\in\Delta_j$ maps onto $g^k$.

Let us embed each vertical inverse sequence in an inverse sequence of surjections.
For each $i$ and $j$ let $P_{ij}=G_{ij}\oplus G_{i-1,j}\oplus\dots\oplus G_{0j}$ and let
$f\:G_{ij}\to P_{ij}$ be defined by $f(g)=\big(g,\,p^{ij}_{i-1,j}(g),\dots,p^{ij}_{0j}(g)\big)$.
Clearly $f$ is injective.
Let $Q_{ij}=P_{ij}/f(G_{ij})$.
It is easy to see that the following diagram commutes for each $i$ and $j$:
\[\begin{tikzcd}[row sep=1.3em,column sep=0.1em]
& & {Q_{i+1,\,j}} \ar[rrrr] \ar[ddd] & & & & {Q_{i+1,\,j+1}} \ar[ddd] \\
& {P_{i+1,\,j}} \ar[rrrr,crossing over] \ar[ddd] \ar[ru,twoheadrightarrow] & & & & {P_{i+1,\,j+1}} \ar[ru,twoheadrightarrow] & \\
{G_{i+1,\,j}} \ar[ddd] \ar[rrrr,crossing over] \ar[ru,rightarrowtail] & & & & {G_{i+1,\,j+1}} \ar[ru,rightarrowtail] & & \\
& & Q_{ij} \ar[rrrr] & & & & {Q_{i+1,\,j+1}} \\
& P_{ij} \ar[rrrr] \ar[ru,twoheadrightarrow] & & & & {P_{i,\,j+1}} \ar[ru,twoheadrightarrow] \ar[from=uuu,crossing over] & \\
G_{ij} \ar[rrrr] \ar[ru,rightarrowtail] & & & & {G_{i,\,j+1}} \ar[ru,rightarrowtail] \ar[from=uuu,crossing over] & &                         
\end{tikzcd}\]
where each vertical map $P_{i+1,j}\to P_{ij}$ is the projection along the first summand; each horizontal map $P_{ij}\to P_{i,\,j+1}$ is 
given by the horizontal maps $G_{lj}\to G_{l,j+1}$ for $l=i,\dots,0$; and each vertical map $Q_{i+1,\,j}\to Q_{ij}$ and each horizontal map 
$Q_{ij}\to Q_{i,\,j+1}$ are yielded respectively by the commutative squares
\[\begin{CD}
G_{i+1,j}@>>>P_{i+1,j}\\
@VVV@VVV\\
G_{ij}@>>>P_{ij}
\end{CD}
\qquad\text{ and }\qquad
\begin{CD}
G_{ij}@>>>P_{ij}\\
@VVV@VVV\\
G_{i,j+1}@>>>P_{i,j+1}.\!
\end{CD}\]

This yields for each $j$ an exact sequence $0\to\lim_i G_{ij}\to\lim_i P_{ij}\to\Lambda_j\to\Delta_j\to 0$, where
$\Lambda_j=\lim_i Q_{ij}$.
Thus each $h^k\in\Delta_0$ is the image of some $\lambda^k\in\Lambda_0$.
Let $q^{kl}$ be the image of $\lambda^k$ in $Q_{l0}$.
Each $q^{kk}\in Q_{k0}$ is the image of some $r_k\in P_{k0}$.
Since the maps $P_{l+1,0}\to P_{l0}$ are surjective, $r_k$ is the image of some $\rho_k\in\lim_l P_{l0}$.
By subtracting from $\lambda^k$ the image of $\rho_k$ in $\Lambda_0$ we may assume that $q^{kk}=0$ and hence also 
$q^{kl}=0$ for all $l\le k$.

Next, for each $i$ and each $j$ we have a short exact sequence of
inverse sequences $0\to G_{ij}^{(l)}\to P_{ij}\to Q_{ij}^l\to 0$, where $Q_{ij}^l=P_{ij}/G_{ij}^{(l)}$.
This yields an exact sequence $0\to\lim_l G_{ij}^{(l)}\to P_{ij}\to\Lambda_{ij}\to\Gamma_{ij}\to 0$, where 
$\Lambda_{ij}=\lim_l Q_{ij}^l$.
The commutative diagram
\[\begin{CD}
0@>>>G_{lj}@>>>P_{lj}@>>>Q_{lj}@>>>0\\
@.@VVV@VVV@VVV@.\\
0@>>>G_{ij}^{(l)}@>>>P_{ij}@>>>Q_{ij}^l@>>>0,\!
\end{CD}\]
where the vertical arrow on the right is yielded by the commutative square on the left, yields 
a commutative diagram
\[\begin{CD}
0@>>>\lim_l G_{lj}@>>>\lim_l P_{lj}@>>>\Lambda_j@>>>\Delta_j@>>>0\\
@.@VVV@VVV@VVV@VVV@.\\
0@>>>\lim_l G_{ij}^{(l)}@>>>P_{ij}@>>>\Lambda_{ij}@>>>\Gamma_{ij}@>>>0.\!
\end{CD}\]

Let $\lambda^k_j$ be the image of $\lambda^k$ in $\Lambda_j$, and let
$\lambda^k_{ij}$ be the image of $\lambda^k_j$ in $\Lambda_{ij}$.
Let $q^{kl}_j$ be the image of $\lambda^k_j$ (and of $q^{kl}$) in $Q_{lj}$.
Then $q^{kl}_j=0$ for all $l\le k$ and all $j$.
Let $q^{kl}_{ij}$ be the image of $\lambda^k_{ij}$ (and of $q^{kl}_j$) in $Q_{ij}^l$.
Then $q^{kl}_{ij}=0$ for all $l\le k$ and all $i$ and $j$ (where $i\ge l$ holds automatically
by the definition of $Q_{ij}^l$).
In particular, $q^{kl}_{jj}=0$ for $l\le k$.
On the other hand, we know that $g^k_{jj}\in\Gamma_{jj}$ is nonzero for each $j\le k$, since $J_{jj}=\N\but[j]$.
Then $\lambda^k_{jj}\in\Lambda_{jj}$ is nonzero for all $j\le k$, since it maps onto $g^k_{jj}$.
Hence for each $j=0,\dots,k$ there exists an $l_{kj}$ such that $q^{kl_j}_{jj}\ne 0$.
Let $l_k=\max(l_{k0},\dots,l_{kk})$.
Then $q^{kl_k}_{jj}\ne 0$, and since $q^{kl}_{jj}=0$ for $l\le k$, we have $l_k>k$.
Upon deleting the rows from $k+1$ to $l_k-1$, where $k$ runs over $0,l_0,l_{l_0},\dots$ (and hence also 
the corresponding columns, to keep the correspondence between columns and rows) we may assume 
that $l_k=k+1$ for each $k$.
Thus $q^{k,k+1}_{jj}\ne 0$ for all $j\le k$.
Hence also $q^{kl}_{jj}\ne 0$ for all $j\le k$ and all $l>k$.

Given a subset $S\subset\N$, let $\mu^S\in\Lambda_0$ be the infinite sum $\sum_{k\in S}\lambda^k$, 
that is, the limit of the finite sums $\sum_{k\in S\cap[n]}\lambda^k$ in the topology of 
the inverse limit of discrete groups.
In other words, $\mu^S$ is the thread consisting of the sums $\sigma^{Sl}\bydef \sum_{k\in S}q^{kl}$, 
which are in fact finite sums $\sigma^{Sl}=\sum_{k\in S\cap[l]}q^{kl}$.
Next let $\mu^S_j$ be the image of $\mu^S$ in $\Lambda_j$ and let $\mu^S_{ij}$ be the image of $\mu^S_j$
in $\Lambda_{ij}$.
Then $\mu^S_{ij}$ is the thread consisting of the sums $\sigma^{Sl}_{ij}\bydef \sum_{k\in S}q^{kl}_{ij}$
(where $l\ge i$ for $Q_{ij}^l$ to be defined), which are in fact finite sums 
$\sigma^{Sl}_{ij}=\sum_{k\in S\cap[l]}q^{kl}_{ij}$.
Let us note that $\mu^S_{jj}\ne\mu^T_{jj}$ as long as $S\but[j]\ne T\but[j]$.
Indeed, if $n$ is the smallest element of the symmetric difference $S\vartriangle T$ such that $n\ge j$, 
then the sums $\sigma^{S,n+1}_{jj}=\sum_{k\in S\cap[n+1]}q^{k,n+1}_{jj}$ and 
$\sigma^{T,n+1}_{jj}=\sum_{k\in T\cap[n+1]}q^{k,n+1}_{jj}$ 
differ by precisely one term, $q^{n,n+1}_{jj}$, which is nonzero by the above.

Thus if $S\vartriangle T$ is infinite, then $\mu^S_{jj}\ne\mu^T_{jj}$ for all $j$.
The relation ``$S\vartriangle T$ is finite'' is an equivalence relation on subsets of $\N$,
and every its equivalence class is countable.
By the axiom of choice there exists a set $U$ of subsets of $\N$ which contains precisely one
representative of each equivalence class.
Then $U$ is uncountable, and for any distinct $S,T\in U$ we have $\mu^S_{jj}\ne\mu^T_{jj}$ for all $j$.
Hence $\{\mu^S_{jj}\mid S\in U\}$ is an uncountable subset of $\Lambda_{jj}$ for each $j$.
Since $P_{jj}$ is countable, $V_j\bydef \{S\in U\mid\mu^S_{jj}\in\im P_{jj}\}$ is also countable.
Then $V\bydef \bigcup_{j=0}^\infty V_j$ is countable as well.
Hence $U\but V$ is uncountable, and in particular nonempty.
Let us pick some $S\in U\but V$.
Then $\mu^S_{jj}\notin\im P_{jj}$ for all $j$.
Hence the image $\gamma_{jj}$ of $\mu^S_{jj}$ in $\Gamma_{jj}$ is nonzero for all $j$.
Then also the image $\gamma_j$ of $\mu^S_j$ under the composition $\Lambda_j\to\lim_i\Lambda_{ij}\to\lim_i\Gamma_{ij}=\Gamma_j$ 
is nonzero for all $j$.
Thus we have found an element $\gamma_0\in\Gamma_0$ which has nonzero image (namely, $\gamma_j$)
in each $\Gamma_j$.
Hence $\colim\Gamma_j\ne 0$, which is a contradiction.
\end{proof}

\subsection{The case of lim$^1$: The final step}

\begin{proposition} \label{bounded-colim3}
Theorem \ref{bounded-colim}(c) holds if each map $\Gamma_i\to\Gamma_{i+1}$ is surjective.
\end{proposition}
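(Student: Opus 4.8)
The plan is to reduce the conclusion to the statement that $\Gamma_k$ is \emph{trivial} for some $k$, and then to extract that triviality from a Baire category argument carried out in the defining product groups.

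First I would observe that, because every bonding map of the direct sequence $\Gamma_0\to\Gamma_1\to\cdots$ is surjective, the conclusion is equivalent to the assertion that $\Gamma_k=0$ for some $k$. Indeed, a surjection with trivial image has trivial domain, so $\Gamma_j\to\Gamma_k$ can be trivial only if $\Gamma_k=0$; conversely, once $\Gamma_k=0$ every map into it is trivial and all later $\Gamma_{k'}$ vanish as surjective images. With surjective bonding maps $\colim_j\Gamma_j$ is the quotient $\Gamma_0/\bigcup_k K_k$, where $K_k=\ker(\Gamma_0\to\Gamma_k)$ and $\Gamma_0/K_k\cong\Gamma_k$; so the hypothesis $\colim_j\Gamma_j=0$ says exactly that the increasing chain $K_1\subseteq K_2\subseteq\cdots$ exhausts $\Gamma_0$, and everything reduces to showing that this exhausting chain reaches all of $\Gamma_0$ at a finite stage.

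Next I would pass to the products. Write $P_j=\prod_i G_{ij}$, a Polish group (a countable product of countable discrete groups), and $\Gamma_j=P_j/B_j$ with $B_j=\im d_j$, where $d_j=1-s$ (with $s$ the shift by the vertical bonding maps) is the usual map whose cokernel is $\lim^1_i G_{ij}$. The horizontal bonding maps induce a continuous homomorphism $\phi_k\:P_0\to P_k$ with $\phi_k(B_0)\subseteq B_k$, and $\tilde K_k:=\phi_k^{-1}(B_k)$ is the preimage in $P_0$ of $K_k$. Since $d_k$ is a continuous homomorphism of Polish groups, $B_k=\im d_k$ is analytic, hence so is $\tilde K_k$; in particular each $\tilde K_k$ is a subgroup of $P_0$ with the Baire property. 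The exhaustion $\bigcup_k K_k=\Gamma_0$ lifts to $\bigcup_k\tilde K_k=P_0$, so as $P_0$ is a Baire space and the $\tilde K_k$ are countably many sets with the Baire property, some $\tilde K_{k_0}$ is non-meager; a non-meager subgroup with the Baire property is clopen (Pettis), so $\tilde K_{k_0}$ is open. An open subgroup of the separable group $P_0$ has countable index, whence $\Gamma_{k_0}\cong P_0/\tilde K_{k_0}$ is countable --- here the surjectivity of $\Gamma_0\to\Gamma_{k_0}$ is exactly what upgrades ``$\Gamma_0/K_{k_0}$ countable'' to ``$\Gamma_{k_0}$ countable''. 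Finally, by the dichotomy that a $\lim^1$ of a tower of countable groups is either trivial or of cardinality continuum (cf.\ \cite{M00}*{Theorem \ref{book:gray}}), a countable $\Gamma_{k_0}=\lim^1_i G_{i,k_0}$ must vanish, as desired.

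The main obstacle, I expect, is the measurability bookkeeping that legitimizes the category argument: $B_k=\im d_k$ need not be Borel, only analytic, so one must invoke that analytic sets have the Baire property (rather than hoping for closedness) before applying Pettis's theorem, and one must have on hand the countable-tower dichotomy for $\lim^1$ to convert countability into triviality. Note that surjectivity is used twice in an essential way --- to reduce the conclusion to ``$\Gamma_k=0$'' and to transport countability from the quotient $\Gamma_0/K_{k_0}$ to $\Gamma_{k_0}$ itself --- consistent with this being genuinely the surjective special case. One could instead try to deduce the proposition from Theorem \ref{bounded-colim}(d) via the natural surjection $\lim^1_i G_{ij}\twoheadrightarrow\lim_i\lim^1_k G_{ij}^{(k)}$ of the Mittag--Leffler exact sequence; but the kernel of that surjection is again a $\lim^1$ of a tower of countable groups whose horizontal maps need not be surjective, so this route does not obviously close, and the Baire argument above seems the more direct path.
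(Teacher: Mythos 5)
Your proof is correct, and it takes a genuinely different route from the paper's. The paper deduces the proposition from Theorem \ref{bounded-colim}(d) --- precisely the route you dismissed in your closing paragraph. The point you missed is that no control of the kernel of the Mittag--Leffler surjection $\Gamma_j\twoheadrightarrow\Delta_j\bydef\lim_k\lim^1_i G_{kj}^{(i)}$ (where $G_{kj}^{(i)}=\im(G_{ij}\to G_{kj})$) is ever needed: assuming $\Gamma_0$ has nontrivial image in every $\Gamma_j$, each $\Gamma_j\ne 0$, so each column fails the Mittag-Leffler condition; since the towers of images have surjective vertical maps and countable terms, each $\Delta_j\ne 0$; and one then only chases elements \emph{forward} through surjections --- any nonzero $\delta_j\in\Delta_j$ lifts to $\Gamma_j$ and further to $\Gamma_0$, showing every map $\Delta_0\to\Delta_j$ is nontrivial, while $\colim_j\Gamma_j=0$ pushes forward to $\colim_j\Delta_j=0$, contradicting (d). As for what each approach buys: your Baire-category argument (analytic subgroups of the Polish group $P_0$, Pettis, countable index, Gray's dichotomy) is independent of Theorem (d), whose proof is the most difficult one in the paper; since the paper's proof of Theorem \ref{bounded-colim}(c) reduces, via Lemma \ref{level-factorization}(b), to exactly this surjective case (the citation of Proposition \ref{bounded-colim2} there appears to be a misprint for the present proposition, as the towers constructed there have surjective rather than injective structure), your argument would yield a proof of (c) bypassing (d) altogether, at the cost of importing descriptive set theory (analytic sets have the Baire property; Pettis's theorem) and the uncountability half of Gray's dichotomy. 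Conversely, the paper's proof is elementary modulo (d) and, as the paper remarks, works without significant changes in the non-abelian case, whereas your argument is essentially abelian: it is the abelian structure that makes $B_k=\im(1-s)$, and hence $\tilde K_k$, a subgroup, which is what Pettis's theorem requires; for non-abelian towers $\lim^1$ is an orbit set, the $\tilde K_k$ are merely analytic subsets, and the openness argument breaks down. One slip of wording: ``a surjection with trivial image has trivial domain'' should say trivial \emph{codomain}; the conclusion you draw from it ($\Gamma_k=0$) is the correct one.
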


This is proved using Theorem \ref{bounded-colim}(d) and will be used in the proof of Theorem \ref{bounded-colim}(c).
The proof works without significant changes in the non-abelian case.

\begin{proof}
We have $\Gamma_j=\lim^1_i G_{ij}$.
By the hypothesis the map $\Gamma_0\to\Gamma_j$ is surjective for each $j$.
Suppose that $\colim\Gamma_j=0$, but there exists a $k$ such that $\Gamma_k$ has a nontrivial image in each $\Gamma_j$.
Upon deleting some columns we may assume that $k=0$.
Then in particular each $\Gamma_j\ne 0$.
Then for each $j$ the inverse sequence $\dots\to G_{1j}\to G_{0j}$ does not satisfy the Mittag-Leffler condition, 
i.e.\ if $G_{kj}^{(i)}$ denotes the image of $G_{ij}$ in $G_{kj}$,
then for each $j$ there exists a $k_j$ such that the groups $G_{k_jj}^{(i)}$ do not stabilize as $i\to\infty$.
Since each $G_{k_jj}$ is countable, $\lim^1_i G_{k_jj}^{(i)}\ne 0$.
Each vertical map $G_{k+1,j}^{(i)}\to G_{kj}^{(i)}$ is surjective, and 
hence so is the map $\lim^1_i G_{k+1,j}^{(i)}\to\lim^1_i G_{kj}^{(i)}$.
Therefore $\Delta_j\bydef \lim_k\lim^1_i G_{kj}^{(i)}$ is nonzero for each $j$.
Let $\delta_j$ be a nonzero element of $\Delta_j$.
Since the map $\Gamma_j\to\Delta_j$ is surjective by the Mittag-Leffler short exact sequence 
(see \cite{M00}*{Theorem \ref{book:mles2}}), some $\gamma_j\in\Gamma_j$ maps onto $\delta_j$.
Since the map $\Gamma_0\to\Gamma_j$ is surjective, some $\gamma\in\Gamma_0$ maps onto $\gamma_j$.
Let $\delta$ be the image of $\gamma$ in $\Delta_0$.
Since $\delta$ maps onto $\delta_j$, and $j$ was arbitrary, we obtain that $\Delta_0$ has nontrivial image 
in each $\Delta_j$.
On the other hand, since $\colim\Gamma_j=0$, $\gamma_j$ maps trivially to some $\Gamma_k$.
Then $\delta_j$ maps trivially to $\Delta_k$.
Since $\delta_j$ was an arbitrary nonzero element of $\Delta_j$, where $j$ was arbitrary, we conclude that 
$\colim\Delta_j=0$.
Hence by Theorem \ref{bounded-colim}(d) $\Delta_0$ maps trivially to some $\Delta_k$, which is a contradiction.
\end{proof}

\begin{proof}[Proof of Theorem \ref{bounded-colim}(c)]
We have $\Gamma_j=\lim^1_i G_{ij}$.
Suppose that $\colim\Gamma_j=0$, but there exists a $k$ such that $\Gamma_k$ has a nonzero image in each $\Gamma_j$.
Upon omitting some columns in the diagram we may assume that $k=0$.

By Lemma \ref{level-factorization}(b) the level map $G_{i0}\to G_{i1}$ factors through
a tower $H_{i1}$ such that the map $\lim^1 G_{i0}\to\lim^1 H_{i1}$ is surjective and
the map $\lim^1 H_{i1}\to\lim^1 G_{i1}$ is injective.
Again by Lemma \ref{level-factorization}(b) the composite level map $H_{i1}\to G_{i1}\to G_{i2}$ 
factors through a tower $H_{i2}$ such that the map $\lim^1 H_{i1}\to\lim^1 H_{i2}$ is surjective and
the map $\lim^1 H_{i2}\to\lim^1 G_{i2}$ is injective.
By continuing in the same fashion we obtain a commutative diagram of level maps of towers
\[\begin{tikzcd}
G_{i0} \drar \rar & H_{i1} \rar \dar & H_{i2} \rar \dar & H_{i3} \rar \dar & \dots \\
& G_{i1} \rar & G_{i2} \rar & G_{i3} \rar & \dots
\end{tikzcd}\]
where the horizontal maps in the upper row induce surjections on $\lim^1$ and the vertical maps induce 
injections on $\lim^1$.

Let $\Delta_i=\lim^1_i H_{ij}$ for $i\ge 1$.
Since $\colim\Gamma_j=0$, it is easy to see that $\colim\Delta_j=0$.
(Indeed, since each element of $\Gamma_i$ maps to zero in some $\Gamma_j$, and the map $\Delta_j\to\Gamma_j$ 
is injective, each element of $\Delta_i$ maps to zero in $\Delta_j$.)
Then by Proposition \ref{bounded-colim2} we get that each $\Delta_i$ maps trivially to some $\Delta_j$.
Then $\Gamma_0$ maps trivially to some $\Gamma_j$, contradicting our hypothesis.
\end{proof}

\section{Applications to topology}

In this section we prove Theorems \ref{ind-colimit} and \ref{ind-isomorphism2}.

We will use the following terminology.
Given inverse sequences of metrizable spaces $\PP=\big(\dots\xr{p_1} P_1\xr{p_0} P_0\big)$ and $\QQ=\big(\dots\xr{q_1} Q_1\xr{q_0} Q_0)$,
by an {\it h-inv-map} $f\:\PP\to\QQ$ we mean a sequence $n_0,n_1,\dots$ along with a collection of maps $f_i\:P_{n_i}\to Q_i$
such that for each $i$ there exists a $j\ge n_{i+1}$ such that the diagram
\[\begin{tikzcd}[row sep=1em]
& P_{n_{i+1}} \ar[r, "f_{i+1}"] &  Q_{i+1} \ar[dd, "q_i"] \\
P_j \ar[ru, "p^j_{n_{i+1}}"] \ar[rd, "p^j_{n_i}"']  & & \\
& P_{n_i} \ar[r, "f_i"] &  Q_i                              
\end{tikzcd}\]
commutes up to homotopy.
(Thus an h-inv-map is a representative of an inv-$\Ho$-morphism.)
We call $f$ an {\it h-level map} if each $n_i=i$ and the above diagram exists already for $j=i+1$.

\subsection{Vanishing: the absolute case}

Theorem \ref{ind-colimit} is a consequence of the following result along with Lemma \ref{Cech}.

\begin{theorem} \label{ind-colimit0}
Let $X$ be a local compactum and let $X_0\subset X_1\subset\dots$ be compact subsets of 
$X$ such that $\bigcup_i X_i=X$ and each $X_i\subset\Int X_{i+1}$.

(a) $H_n(X)=0$ if and only if for each $j$ there exists a $k\ge j$ such that the inclusion induced map 
$H_n(X_j)\to H_n(X_k)$ is trivial.

(b) Let $x\in X_0$.
Suppose that $X$ is locally connected and either $n\le 1$ or $\breve\pi_1(X,x)=1$.
Then $\pi_n(X,x)$ is trivial if and only if for each $j$ there exists a $k\ge j$ such that the inclusion induced map 
$\pi_n(X_j,x)\to\pi_n(X_k,x)$ is trivial.
\end{theorem}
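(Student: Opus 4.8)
The plan is to reduce both parts to the purely algebraic Theorem \ref{bounded-colim} by resolving the exhausting compacta into polyhedra. First I would fix, for each $j$, an inverse sequence of compact polyhedra with $X_j=\lim_i P_{ij}$; the inclusions $X_j\hookrightarrow X_{j+1}$ induce inv-morphisms between these sequences, which by the straightening construction (Lemma \ref{straightening}) may be taken to be level maps, producing a double system $\{P_{ij}\}$ with $i$ running in the inverse and $j$ in the direct direction, each $P_{ij}$ a compact polyhedron. Since each $X_i\subset\Int X_{i+1}$ and $\bigcup_i X_i=X$, any compact subset of $X$ lies in some $X_i$, so the $X_i$ are cofinal among compact subsets and $H_n(X)=\colim_j H_n(X_j)$, $\pi_n(X,x)=\colim_j\pi_n(X_j,x)$. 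The ``if'' directions are then immediate: if the ind-group $(H_n(X_j))_j$ (resp.\ the ind-set $(\pi_n(X_j,x))_j$) is trivial, then its colimit, namely $H_n(X)$ (resp.\ $\pi_n(X,x)$), vanishes.

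For the ``only if'' direction of (a), the Milnor exact sequence supplies, naturally in $j$, a short exact sequence
\[0\to\lim\nolimits^1_i H_{n+1}(P_{ij})\to H_n(X_j)\to\lim\nolimits_i H_n(P_{ij})\to 0,\]
in which every $H_*(P_{ij})$ is a finitely generated abelian group. As $\colim_j$ is exact, the vanishing of $H_n(X)$ forces both $\colim_j\lim_i H_n(P_{ij})=0$ and $\colim_j\lim^1_i H_{n+1}(P_{ij})=0$. Theorem \ref{bounded-colim}(a) applied to the first and Theorem \ref{bounded-colim}(c) applied to the second show that each of the ind-groups $(\lim_i H_n(P_{ij}))_j$ and $(\lim^1_i H_{n+1}(P_{ij}))_j$ is trivial. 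Given $j$, I would first pick $j'\ge j$ with $\lim_i H_n(P_{ij})\to\lim_i H_n(P_{ij'})$ trivial, so the image of any $x\in H_n(X_j)$ in $H_n(X_{j'})$ lands in the image of the $\lim^1$-subgroup; then pick $k\ge j'$ with $\lim^1_i H_{n+1}(P_{ij'})\to\lim^1_i H_{n+1}(P_{ik})$ trivial. As both chosen bonding maps are entirely trivial, the composite $H_n(X_j)\to H_n(X_k)$ is trivial, uniformly in $x$.

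For part (b) I would separate the cases. When $n=0$, local connectedness enters through Lemma \ref{continua}: each $\pi_0(X_j,x)\cong\check\pi_0(X_j,x)$ is a \emph{finite} pointed set, so the hypothesis $\colim_j\pi_0(X_j,x)=1$ lets Proposition \ref{obvious} conclude ind-triviality directly. When $n=1$, the Quigley short exact sequence \cite{M1}*{Theorem 3.1(b)} reads
\[1\to\lim\nolimits^1_i\pi_2(P_{ij})\to\pi_1(X_j)\to\lim\nolimits_i\pi_1(P_{ij})\to 1,\]
with $\pi_2(P_{ij})$ countable abelian and $\pi_1(P_{ij})$ finitely generated; exactness of $\colim_j$ gives $\colim_j\lim^1_i\pi_2(P_{ij})=1$ and $\colim_j\lim_i\pi_1(P_{ij})=\breve\pi_1(X,x)=1$ (the last equality by Lemma \ref{Cech}(a)). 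Theorem \ref{bounded-colim}(c) trivializes the $\lim^1$-term; for the $\lim$-term, after arranging that each tower $(\pi_1(P_{ij}))_i$ is Mittag--Leffler, Theorem \ref{bounded-colim}(b) applies, and the same two-step chase as in (a), done with pointed sets, finishes. When $n\ge2$ I would use $\breve\pi_1(X,x)=1$ exactly as in the proof of Proposition \ref{ind-0}(b): having trivialized the ind-group $(\check\pi_1(X_j,x))_j=(\lim_i\pi_1(P_{ij}))_j$, for each $j$ choose $j'\ge j$ with $\check\pi_1(X_j)\to\check\pi_1(X_{j'})$ trivial and factor the induced map of inverse systems through an inverse system of \emph{simply connected} compact polyhedra, whose $\pi_n$ and $\pi_{n+1}$ are finitely generated abelian by Serre's theorem. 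Re-indexing these factorizations into a double system and repeating the argument of (a)---Quigley's sequence $1\to\lim^1_i\pi_{n+1}\to\pi_n(X_j)\to\lim_i\pi_n\to1$ with finitely generated abelian terms, Theorem \ref{bounded-colim}(a) and (c), and the two-step chase---yields ind-triviality of $(\pi_n(X_j,x))_j$.

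The principal obstacle I anticipate is precisely the $\pi_1$-handling that funnels the geometry into the narrow hypotheses of Theorem \ref{bounded-colim}: securing the Mittag--Leffler property of the shape $\pi_1$-towers in the case $n=1$, and effecting the factorization through simply connected polyhedra in the case $n\ge2$. Both are where local connectedness is indispensable---without it the non-abelian, non-Mittag--Leffler behaviour exhibited by Examples \ref{unbounded-colim} and \ref{unbounded-colim2} defeats the conclusion---so the crux is to show that for locally connected $X$ the resolutions can be chosen (or the bonding maps $X_j\to X_{j'}$ factored) so that only finitely generated abelian groups, to which Theorem \ref{bounded-colim} applies, are ever exposed. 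By comparison, the exactness of $\colim$, the Milnor and Quigley sequences, and the bookkeeping of the straightened double system and the diagram chases should be routine.
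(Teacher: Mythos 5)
Your part (a) is correct and is essentially the paper's own argument (the paper produces the level maps by carving the $P_{ij}$ out of a single scalable polyhedral resolution of $X$ rather than by straightening separate resolutions of the $X_j$, but that difference is immaterial); the two-step chase through the Milnor sequence using Theorem \ref{bounded-colim}(a) and then (c) is exactly what the paper does. The gaps are in part (b), and they sit precisely at the points you flag as ``the principal obstacle'' but do not resolve --- which is where the actual content of the theorem lies.

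First, your $n=0$ case contains a genuine error: Lemma \ref{continua} applies to \emph{locally connected} compacta, but the $X_j$ need not be locally connected --- local connectedness of $X$ is not inherited by compact subsets (already $\{0\}\cup\{1/n\mid n\in\N\}\subset\R$ fails), so the claim that $\pi_0(X_j,x)\cong\check\pi_0(X_j,x)$ is finite is unjustified, and Proposition \ref{obvious} cannot be invoked. The paper's fix, which is the key geometric use of local connectedness in the whole proof, is to factor each inclusion $X_j\to X_{j+1}$ through a locally connected compactum $Y_j$ (this uses $X_j\subset\Int X_{j+1}$ and \cite{M1}*{proof of Theorem 6.11(a)}) and to run every subsequent argument on the $Y_j$ instead of the $X_j$: Lemma \ref{continua} then gives finiteness of $\pi_0(Y_j,x)$, and, via the Quigley sequence in degree $0$, the vanishing of $\lim^1_i\pi_1(Q_{ij})$ for polyhedral resolutions $Q_{ij}$ of $Y_j$ --- i.e.\ exactly the Mittag--Leffler property of the shape $\pi_1$-towers that your $n=1$ case assumes can be ``arranged'' but never establishes.

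Second, in the $n\ge2$ case your reduction is invalid as stated: from triviality of $\check\pi_1(X_j)\to\check\pi_1(X_{j'})$ --- a statement about maps of inverse \emph{limits} --- you cannot factor the map of inverse \emph{systems} of polyhedra through simply connected polyhedra. Coning off $1$-skeleta requires the individual maps $\pi_1(Q_{ij})\to\pi_1(Q_{i,j'})$ to be trivial for each $i$, and triviality of $\lim$ of a level map of towers does not imply pro-triviality of that map. The paper bridges this with Lemma \ref{level-factorization3}: because the towers $G_{ij}=\pi_1(Q_{ij})$ are Mittag--Leffler (again thanks to the $Y_j$), the level map $G_{ij}\to G_{i,\,j+1}$, which is trivial on $\lim$, factors through a tower with trivial $\lim$ and $\lim^1$, hence trivial as a pro-group; after re-indexing by Lemma \ref{straightening} one may assume the tower maps themselves are trivial, and only then can one cone off the $1$-skeleta, apply Serre's theorem, and feed finitely generated abelian groups into Theorem \ref{bounded-colim}(a) and (c). The $Y_j$-factorization and this $\lim$-to-pro upgrade are the two ideas missing from your proposal; without them the argument does not go through.
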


\begin{proof}[Proof. (a)] The ``if'' assertion is trivial.

Let us prove the ``only if'' assertion. 
Let us represent $X$ as the limit of a scalable inverse sequence of locally compact polyhedra
$\dots\xr{p_1} |K_1|\xr{p_0} |K_0|$ (see \cite{M00}*{Theorem \ref{book:isbell}}).
Let $P_{ij}$ be the union of all simplexes of $K_i$ that meet the image of $X_j$.
Then $P_{ij}$ is a compact polyhedron, $p_i(P_{i+1,j})\subset P_{ij}$, and each $X_j$ is homeomorphic 
to $\lim\big(\dots\to P_{1j}\to P_{0j}\big)$.
The Milnor short exact sequences yield for every $j\le k\le l$ a commutative diagram with exact rows
\[\begin{tikzcd}
0\rar &\lim\limits_i\!^1\, H_{n+1}(P_{ij})\rar\dar &H_n(X_j)\rar\dar &\lim\limits_i H_n(P_{ij})\rar\dar &0\\
0\rar &\lim\limits_i\!^1\, H_{n+1}(P_{ik})\rar\dar &H_n(X_k)\rar\dar &\lim\limits_i H_n(P_{ik})\rar\dar &0\\
0\rar &\lim\limits_i\!^1\, H_{n+1}(P_{il})\rar &H_n(X_l)\rar &\lim\limits_i H_n(P_{il})\rar &0.\!
\end{tikzcd}\]
Since $\colim H_n(X_j)=H_n(X)=0$, using the upper half of this diagram we easily get that
$\colim_j\lim_i H_n(P_{ij})=0$ and $\colim_j\lim^1_i H_{n+1}(P_{ij})=0$.
Then by Theorem \ref{bounded-colim}(a) for each $j$ there exists a $k\ge j$ such that 
the map $\lim_i H_n(P_{ij})\to\lim_i H_n(P_{ik})$ is trivial.
Moreover, by Theorem \ref{bounded-colim}(c) there further exists an $l\ge k$ such that 
the map $\lim^1_i H_{n+1}(P_{ik})\to\lim^1_i H_{n+1}(P_{il})$ is also trivial.
Then it is easy to see from the above diagram that the map $H_n(X_j)\to H_n(X_l)$ is trivial.
\end{proof}

\begin{proof}[(b)] Since $X$ is locally connected, each inclusion map $X_j\to X_{j+1}$ factors through a locally connected compactum $Y_j$ 
\cite{M1}*{proof of Theorem 6.11(a)}.
Let us note that $Y_j$ contains $X_j$ and in particular the basepoint $x$.
To prove that each $\pi_n(X_j,x)$ maps trivially to $\pi_n(X_k,x)$ for some $k$ it suffices to show that
$\pi_n(Y_{j+1},x)$ maps trivially to $\pi_n(Y_k,x)$, for then the composition 
$\pi_n(X_j,x)\to\pi_n(Y_{j+1},x)\to\pi_n(Y_k,x)\to\pi_n(X_k,x)$ is also trivial.

The case $n=0$ is easy.
Since $Y_j$ is locally connected, by Lemma \ref{continua} $\pi_0(Y_j,x)$ is finite for each $j$.
Then by Proposition \ref{obvious} each $\pi_0(Y_j,x)$ maps trivially to some $\pi_0(Y_k,x)$.

Let us represent $Y_j$ as the limit of an inverse sequence of compact polyhedra $\QQ_j=\big(\dots\to Q_{1j}\to Q_{0j}=pt\big)$.
Let $q_{ij}$ be the image of $x$ in $Q_{ij}$.
Each composition $Y_j\to X_{j+1}\to Y_{j+1}$ extends to a map $Q_{[0,\infty],\,j}\to Q_{[0,\infty],\,j+1}$ of the extended mapping
telescopes which comes from an h-inv-map $\phi_j\:\QQ_j\to\QQ_{j+1}$ and sends the extended mapping telescope $q_{[0,\infty],\,j}$ 
of the singletons $\dots\to\{q_{1j}\}\to\{q_{0j}\}$ into $q_{[0,\infty],\,j+1}$ (see \cite{M00}*{Theorem \ref{book:strongshape}(a)
and Remark \ref{book:telescope0}} or \cite{M1}*{proof of Lemma 2.1(a)}).
By using Lemma \ref{straightening} we may assume that each $\phi_j$ is an h-level map.

The Quigley short exact sequences (see \cite{M1}*{Theorem 3.1(b)}) yield for every $j\le k\le l$ a commutative diagram with exact rows
\[\begin{tikzcd}
1\rar &\lim\limits_i\!^1\,\pi_{n+1}(Q_{ij},q_{ij})\rar\dar &\pi_n(Y_j,x)\rar\dar &\lim\limits_i\pi_n(Q_{ij},q_{ij})\rar\dar &1\\
1\rar &\lim\limits_i\!^1\,\pi_{n+1}(Q_{ik},q_{ik})\rar\dar &\pi_n(Y_k,x)\rar\dar &\lim\limits_i\pi_n(Q_{ik},q_{ik})\rar\dar &1\\
1\rar &\lim\limits_i\!^1\,\pi_{n+1}(Q_{il},q_{il})\rar &\pi_n(Y_l,x)\rar &\lim\limits_i\pi_n(Q_{il},q_{il})\rar &1,\!
\end{tikzcd}\]
which consists of abelian groups for $n\ge 2$, groups for $n=1$ and pointed sets for $n=0$.
Since $\colim\pi_n(Y_j,x)\simeq\colim\pi_n(X_j,x)=\pi_n(X,x)=*$, using the upper half of this diagram we easily get that
$\colim_j\lim_i\pi_n(Q_{ij},q_{ij})=*$ and $\colim_j\lim^1_i\pi_{n+1}(Q_{ij},q_{ij})=*$.

Let $G_{ij}=\pi_1(Q_{ij},q_{ij})$.
Since $Y_j$ is locally connected, by Lemma \ref{continua} the map $\pi_0(Y_j,x)\to\lim_i\pi_0(Q_{ij},q_{ij})$ is bijective and hence
$\lim^1_i G_{ij}$ is trivial for each $j$.
On the other hand, having already considered the case $n=0$, we may assume that $n\ge 1$.
Then by the hypothesis either $n=1$ or $\breve\pi_1(X,x)=1$, and in both cases we get that $\colim_j\lim_i G_{ij}=1$.
Then by Theorem \ref{bounded-colim}(b) for each $j$ there exists 
a $k\ge j$ such that the map $\lim_i G_{ij}\to\lim_i G_{ik}$ is trivial.
Moreover, by Theorem \ref{bounded-colim}(c) there further exists an $l\ge k$ such that 
the map $\lim^1_i\pi_2(Q_{ik},q_{ik})\to\lim^1_i\pi_2(Q_{il},q_{il})$ is also trivial.
Then it is easy to see from the above diagram that the map $\pi_1(Y_j,x)\to\pi_1(Y_l,x)$ is trivial.
This completes the proof of the case $n=1$.

In the case $n\ge 2$ we have already shown that for each $j$ there exists a $k>j$ such that the map 
$\lim_i G_{ij}\to\lim_i G_{ik}$ is trivial.
By omitting some of the $X_j$ we may assume that $k=j+1$.
Since $\lim^1_i G_{ij}$ is trivial, the tower $\dots\to G_{1j}\to G_{0j}$ satisfies 
the Mittag-Leffler condition (see \cite{M00}*{Theorem \ref{book:gray}(b)}).
Then by Lemma \ref{level-factorization3} for each $j$ the map of towers of groups $G_{ij}\to G_{i,\,j+1}$ factors through 
a tower of groups $B_{ij}$ such that both $\lim_i B_{ij}$ and $\lim^1_i B_{ij}$ are trivial.
Moreover each $B_{ij}$ is a quotient of $G_{ij}$ and hence is countable.
Then the tower $\dots\to B_{1j}\to B_{0j}$ satisfies the Mittag-Leffler condition, and since it also has trivial inverse limit,
it is trivial as a pro-group, that is, for each $i$ there exists an $l\ge i$ such that the map $B_{lj}\to B_{ij}$ is trivial 
(see \cite{M1}*{Lemma 3.4(a)}).
Therefore the composition $G_{lj}\to B_{lj}\to B_{ij}\to G_{i,\,j+1}$ is trivial.
Using Lemma \ref{straightening} we may assume that already the maps $G_{ij}\to G_{i,\,j+1}$ are trivial.

Let $\bar Q_{ij}$ be the connected component of $Q_{ij}$ containing $q_{ij}$, and let $\bar Q_{ij}^{(1)}$ be
the $1$-skeleton of some triangulation of $\bar Q_{ij}$ which has $q_{ij}$ as a vertex.
Let $\hat Q_{ij}=Q_{ij}\cup C(\bar Q_{ij}^{(1)})$, where $CP$ denotes the cone over the polyhedron $P$.
Then the map $(Q_{ij},q_{ij})\to (Q_{i,\,j+1},q_{i,\,j+1})$ factors through $(\hat Q_{ij},q_{ij})$.
Also each bonding map $Q_{i+1,j}\to Q_{ij}$ extends to a map $\hat Q_{i+1,j}\to\hat Q_{ij}$.

Since $\pi_1(\hat Q_{ij},q_{ij})=1$, by Serre's theorem on classes of abelian groups (see \cite{Sp}*{9.6.16}) 
the abelian group $\pi_n(\hat Q_{ij},q_{ij})$ is finitely generated.
Then by Theorem \ref{bounded-colim}(a) for each $j$ there exists a $k>j$ such that the middle map in
the composition $\lim_i\pi_n(Q_{ij},q_{ij})\to\lim_i\pi_n(\hat Q_{ij},q_{ij})\to\lim_i\pi_n(\hat Q_{i,\,k-1},q_{i,\,k-1})
\to\lim_i\pi_n(Q_{ik},q_{ik})$ is trivial.
Moreover, by Theorem \ref{bounded-colim}(c) there further exists an $l>k$ such that the middle map in
the composition $\lim^1_i\pi_{n+1}(Q_{ik},q_{ik})\to\lim^1_i\pi_{n+1}(\hat Q_{ik},q_{ik})\to\lim^1_i\pi_{n+1}(\hat Q_{i,\,l-1},q_{i,\,l-1})
\to\lim^1_i\pi_{n+1}(Q_{il},q_{il})$ is trivial.
Then it is easy to see from the above diagram that the map $\pi_n(Y_j,x)\to\pi_n(Y_l,x)$ is trivial.
\end{proof}

\subsection{Vanishing: the relative case}

Next we prove a relative version of Theorem \ref{ind-colimit0}.

\begin{theorem} \label{ind-colimit2}
Let $X$ be a local compactum and let $X_0\subset X_1\subset\dots$ be compact subsets of 
$X$ such that $\bigcup_i X_i=X$ and each $X_i\subset\Int X_{i+1}$.
Let $A$ be a closed subset of $X$ and let $A_i=X_i\cap X$.

(a) $H_n(X,A)=0$ if and only if for each $j$ there exists a $k\ge j$ such that the inclusion induced map 
$H_n(X_j,A_j)\to H_n(X_k,A_k)$ is trivial.

(b) Let $x\in A_0$ and suppose that $X$ and $A$ are locally connected and $\breve\pi_1(X,x)=\breve\pi_1(A,x)=1$.
Then $\pi_n(X,A,x)$ is trivial if and only if for each $j$ there exists a $k\ge j$ such that the inclusion induced map 
$\pi_n(X_j,A_j,x)\to\pi_n(X_k,A_k,x)$ is trivial.
\end{theorem}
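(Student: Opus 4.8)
The plan is to follow the proofs of Theorem \ref{ind-colimit0}(a) and (b) almost verbatim, replacing every absolute group by its relative analogue and every Milnor (resp.\ Quigley) short exact sequence by its relative counterpart. For part (a) I would first note that relative Steenrod--Sitnikov homology has compact supports and that $\colim$ is exact, so that $H_n(X,A)\simeq\colim_j H_n(X_j,A_j)$; this gives the ``if'' direction at once and reduces the ``only if'' direction to showing that the ind-group $H_n(X_0,A_0)\to H_n(X_1,A_1)\to\dots$ is trivial. Representing $X$ as the limit of a scalable inverse sequence of locally compact polyhedra $\dots\to|K_1|\to|K_0|$, I would let $P_{ij}$ be the union of the simplexes of $K_i$ meeting the image of $X_j$ and $R_{ij}\subset P_{ij}$ the union of those meeting the image of $A_j$, so that $(X_j,A_j)\simeq\lim_i(P_{ij},R_{ij})$. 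The relative Milnor sequences then give, for each $j\le k\le l$, a commutative ladder with exact rows relating $\lim^1_i H_{n+1}(P_{ij},R_{ij})$, $H_n(X_j,A_j)$ and $\lim_i H_n(P_{ij},R_{ij})$. From $\colim_j H_n(X_j,A_j)=0$ the upper half yields $\colim_j\lim_i H_n(P_{ij},R_{ij})=0$ and $\colim_j\lim^1_i H_{n+1}(P_{ij},R_{ij})=0$; since the relative homology of a compact polyhedral pair is finitely generated abelian, Theorem \ref{bounded-colim}(a) supplies for each $j$ a $k\ge j$ killing the map of inverse limits and Theorem \ref{bounded-colim}(c) a further $l\ge k$ killing the map of $\lim^1$'s, whereupon a diagram chase makes $H_n(X_j,A_j)\to H_n(X_l,A_l)$ trivial.

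For part (b) I would again imitate the homotopy argument. Local connectedness of $X$ and $A$ lets me factor each inclusion of pairs $(X_j,A_j)\to(X_{j+1},A_{j+1})$ through a pair of locally connected compacta $(Y_j,B_j)$ with $x\in B_j$, reducing the claim to the triviality of $\pi_n(Y_{j+1},B_{j+1},x)\to\pi_n(Y_k,B_k,x)$ for a suitable $k$; the ``if'' direction is again immediate from $\pi_n(X,A,x)\simeq\colim_j\pi_n(X_j,A_j,x)$. Presenting $(Y_j,B_j)$ as $\lim_i(Q_{ij},S_{ij})$ for compact polyhedral pairs, extending to the extended mapping telescopes and straightening the induced h-inv-maps into h-level maps by Lemma \ref{straightening}, the relative Quigley sequences produce the stacked ladder with entries $\lim^1_i\pi_{n+1}(Q_{ij},S_{ij})$, $\pi_n(Y_j,B_j,x)$ and $\lim_i\pi_n(Q_{ij},S_{ij})$. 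Since $\colim_j\pi_n(Y_j,B_j,x)\simeq\pi_n(X,A,x)=*$, the top half gives $\colim_j\lim_i\pi_n(Q_{ij},S_{ij})=*$ and $\colim_j\lim^1_i\pi_{n+1}(Q_{ij},S_{ij})=*$. Here the two standing hypotheses play the role that $\breve\pi_1(X,x)=1$ alone played in the absolute case, but now for both factors: $\breve\pi_1(X,x)=1$ forces $\colim_j\lim_i\pi_1(Q_{ij})=1$ and $\breve\pi_1(A,x)=1$ forces $\colim_j\lim_i\pi_1(S_{ij})=1$, while Lemma \ref{continua} again makes the relevant $\lim^1$ of fundamental groups vanish.

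The main obstacle will be the reduction of the relative homotopy groups of the polyhedral pairs to finitely generated abelian groups in the range $n\ge2$. In the absolute case this was carried out by making the $\pi_1$-towers Mittag-Leffler via Lemma \ref{level-factorization3}, killing them as pro-groups, and then coning off the $1$-skeleta of the relevant components so that Serre's theorem applies. In the relative setting I must perform this reduction for the \emph{pairs}: coning off the $1$-skeleton of the basepoint component of $S_{ij}$ inside that of $Q_{ij}$ to obtain a pair $(\hat Q_{ij},\hat S_{ij})$ with \emph{both} members simply connected, through which the bonding maps of the pair factor once both $\pi_1$-towers have been trivialized. Then the relative homotopy exact sequence of $(\hat Q_{ij},\hat S_{ij})$ squeezes $\pi_n(\hat Q_{ij},\hat S_{ij})$ between the finitely generated abelian groups $\pi_*(\hat Q_{ij})$ and $\pi_*(\hat S_{ij})$, so it too is finitely generated abelian and Theorem \ref{bounded-colim}(a) and (c) apply as before; it is precisely this step that makes the \emph{double} hypothesis $\breve\pi_1(X,x)=\breve\pi_1(A,x)=1$ indispensable. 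The genuinely low degrees are disposed of separately: $\pi_0(Y_j,B_j)$ is finite because $\pi_0(Y_j)$ is finite by Lemma \ref{continua}, so Proposition \ref{obvious} applies; and after the simply-connected reduction $\pi_1(\hat Q_{ij},\hat S_{ij})$ is already trivial, which takes care of the pointed-set case $n=1$, where $\pi_n(X,A,x)$ is not a group and the group-theoretic parts of Theorem \ref{bounded-colim} cannot be invoked directly.
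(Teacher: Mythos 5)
Your treatment of part (a) and of the cases $n=0$ and $n\ge 2$ of part (b) follows the paper's own argument essentially step for step (factorization through pairs of locally connected compacta, polyhedral resolutions straightened into h-level maps, relative Milnor and Quigley sequences, coning off $1$-skeleta, Serre's theorem, then Theorem \ref{bounded-colim}(a) and (c)). The genuine gap is your disposal of the case $n=1$. You claim that after the reduction both members of the pair $(\hat Q_{ij},\hat S_{ij})$ are simply connected and hence $\pi_1(\hat Q_{ij},\hat S_{ij})$ is already trivial. This is false: the construction $\hat Q_{ij}=Q_{ij}\cup C(\bar Q_{ij}^{(1)})$, $\hat S_{ij}=S_{ij}\cup C(\bar S_{ij}^{(1)})$ cones off only the $1$-skeleton of the \emph{basepoint component}, so $\hat Q_{ij}$ and $\hat S_{ij}$ are simply connected only at the basepoint and remain disconnected in general --- and nothing in the hypotheses of Theorem \ref{ind-colimit2} forces connectedness ($\breve\pi_0$ is not assumed trivial there, unlike in Theorem \ref{Wh-thm1}). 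Since $\pi_1(\hat Q_{ij},q_{ij})=1$, the nontrivial point-inverses of $\pi_1(\hat Q_{ij},\hat S_{ij},q_{ij})\to\pi_0(\hat S_{ij},q_{ij})$, being orbits of $\pi_1(\hat Q_{ij},q_{ij})$, are singletons, so this map is injective; but its image consists of the components of $\hat S_{ij}$ lying in the basepoint component of $\hat Q_{ij}$, which is in general a nontrivial set. So the relative $\pi_1$ does not vanish; and because $\pi_1$ of a pair is only a pointed set, you cannot fall back on Theorem \ref{bounded-colim}(b) either (the paper's footnote shows the group hypothesis there cannot be dropped).

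The paper closes exactly this hole with a finiteness argument, and your proof needs the same replacement step: the injection $\pi_1(\hat Q_{ij},\hat R_{ij},q_{ij})\to\pi_0(\hat R_{ij},q_{ij})\simeq\pi_0(R_{ij},q_{ij})$, combined with Lemma \ref{continua} applied to the locally connected compactum $B_j$, shows that $\lim_i\pi_1(\hat Q_{ij},\hat R_{ij},q_{ij})$ is \emph{finite}. Then the $\lim$-part of the $n=1$ case is handled by Proposition \ref{obvious} (a direct sequence of finite pointed sets with trivial colimit is ind-trivial), while the $\lim^1_i\pi_2$-part is still covered by Theorem \ref{bounded-colim}(c), since relative $\pi_2$ of the hatted pairs is finitely generated abelian (being a quotient of $\pi_2(\hat Q_{ij},q_{ij})$, as $\pi_1(\hat S_{ij},q_{ij})=1$). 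As written, your $n=1$ case does not go through.
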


\begin{proof}[Proof. (a)] Similarly to the proof of Theorem \ref{ind-colimit}(a).
\end{proof}

\begin{proof}[(b)] Since $X$ and $A$ are locally connected, each inclusion map $(X_j,A_j)\to (X_{j+1},A_{j+1})$ factors through 
a pair of locally connected compacta $(Y_j,B_j)$ similarly to \cite{M1}*{proof of Theorem 6.11(a)}.
Let us note that $B_j$ contains $A_j$ and in particular the basepoint $x$.
Since $Y_j$ is locally connected, by Lemma \ref{continua} $\pi_0(Y_j,x)$ is finite.
Then so is its quotient $\pi_0(Y_j,B_j,x)$. 
Now the proof of the case $n=0$ is similar to that in Theorem \ref{ind-colimit}(b).

Let us represent $(Y_j,B_j)$ as the limit of an inverse sequence compact polyhedral pairs $\dots\to (Q_{1j},R_{1j})\to 
(Q_{0j},R_{0j})=(pt,pt)$.
Let $q_{ij}$ be the image of $x$ in $R_{ij}$.
Similarly to the proof of Theorem \ref{ind-colimit}(b) each composition $(Y_j,B_j)\to (X_{j+1},A_{j+1})\to (Y_{j+1},B_{j+1})$ extends to 
a map $(Q_{[0,\infty],\,j},R_{[0,\infty],\,j})\to (Q_{[0,\infty],\,j+1},R_{[0,\infty],\,j})$ of the pairs of extended mapping
telescopes which comes from an h-level map $(Q_{ij},R_{ij})\to(Q_{i,\,j+1},R_{i,\,j+1})$ and sends the base ray
$q_{[0,\infty],\,j}$ into the base ray $q_{[0,\infty],\,j+1}$.

Since the $Y_j$ and $B_j$ are locally connected, and $\breve\pi_1(X,x)$ and $\breve\pi_1(A,x)$ are trivial,
similarly to the proof of Theorem \ref{ind-colimit}(b) we may assume that the maps 
$\pi_1(Q_{ij},q_{ij})\to\pi_1(Q_{i,\,j+1},q_{i,\,j+1})$ and
$\pi_1(R_{ij},q_{ij})\to\pi_1(R_{i,\,j+1},q_{i,\,j+1})$ are trivial.
Let $\bar Q_{ij}$ and $\bar R_{ij}$ be the connected components of $Q_{ij}$ and $R_{ij}$ containing $q_{ij}$,
and let $(K_{ij},L_{ij})$ be a triangulation of $(\bar Q_{ij},\bar R_{ij})$ which has $q_{ij}$ as a vertex.
Let $\bar Q_{ij}^{(1)}$ and $\bar R_{ij}^{(1)}$ be the $1$-skeleta of $K_{ij}$ and $L_{ij}$.
Let $\hat Q_{ij}=Q_{ij}\cup C(\bar Q_{ij}^{(1)})$ and $\hat R_{ij}=R_{ij}\cup C(\bar R_{ij}^{(1)})$, where
$CP$ denotes the cone over the polyhedron $P$.
Then the map $(Q_{ij},R_{ij},q_{ij})\to (Q_{i,\,j+1},R_{i,\,j+1},q_{ij})$ factors through 
$(\hat Q_{ij},\hat R_{ij},q_{ij})$.
Also each bonding map $(Q_{i+1,j},R_{i+1,j})\to (Q_{ij},R_{ij})$ extends to a map 
$(\hat Q_{i+1,j},\hat R_{i+1,j})\to(\hat Q_{ij},\hat R_{ij})$.

Since $\pi_1(\hat Q_{ij},q_{ij})=\pi_1(\hat R_{ij},q_{ij})=1$, we get that the map
$\pi_2(\hat Q_{ij},q_{ij})\to\pi_2(\hat Q_{ij},\hat R_{ij},q_{ij})$ is surjective; 
the map $\pi_1(\hat Q_{ij},\hat R_{ij},q_{ij})\to\pi_0(\hat R_{ij},q_{ij})$ is injective; and
the abelian groups $\pi_n(\hat Q_{ij},q_{ij})$ and $\pi_n(\hat R_{ij},q_{ij})$, where $n\ge 2$, are finitely generated 
by Serre's theorem (see \cite{Sp}*{9.6.16}).
It follows that for $n\ge 2$ the groups $\pi_n(\hat Q_{ij},\hat R_{ij},q_{ij})$ are all abelian and finitely generated;
whereas the map $\lim_i\pi_1(\hat Q_{ij},\hat R_{ij},q_{ij})\to\lim_i\pi_0(\hat R_{ij},q_{ij})$ is injective.
But clearly $\pi_0(\hat R_{ij},q_{ij})\simeq\pi_0(R_{ij},q_{ij})$, and since $Y_j$ is locally connected, 
by Lemma \ref{continua} $\lim_i\pi_0(R_{ij},q_{ij})$ is finite.
Thus we get that $\lim_i\pi_1(\hat Q_{ij},\hat R_{ij},q_{ij})$ is also finite.
The remainder of the proof for $n\ge 1$ is now similar to the proof of Theorem \ref{ind-colimit}(b).
\end{proof}

\subsection{Pro-isomorphism}

Theorem \ref{ind-isomorphism2} is a consequence of the following result along with Lemma \ref{Cech}.

\begin{theorem} \label{ind-isomorphism1}
Let $f\:X\to Y$ be a map between local compacta, let 
$X_0\subset X_1\subset\dots$ and $Y_0\subset Y_1\subset\dots$ be compact subsets of 
$X$ and $Y$ respectively such that $\bigcup_i X_i=X$ and $\bigcup_i Y_i=Y$ and each 
$X_i\subset\Int X_{i+1}$ and each $Y_i\subset\Int Y_{i+1}$ and also each $f(X_i)\subset Y_i$.
Let $f_i\:X_i\to Y_i$ be the restriction of $f$.

(a) $f_*\:H_n(X)\to H_n(Y)$ is an isomorphism for all $n$ if and only if the induced maps $f_{i*}\:H_n(X_i)\to H_n(Y_i)$ 
represent an ind-isomorphism for all $n$.

(b) Suppose that $X$ and $Y$ are locally connected and $\breve\pi_1(X,x)=\breve\pi_1\big(Y,f(x)\big)=1$ for each $x\in X$.
Then $f_*\:\pi_n(X,x)\to\pi_n\big(Y,f(x)\big)$ is an isomorphism for all $n$ and all $x\in X$ if and only if 
for each $x\in X$ the induced maps $f_{i*}\:\pi_n(X_i,x)\to\pi_n\big(Y_i,f(x)\big)$, where $i\ge\min\{j\mid x\in X_j\}$, 
represent an ind-isomorphism for all $n$.
\end{theorem}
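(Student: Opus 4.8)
The plan is to reduce both parts to the relative vanishing Theorem \ref{ind-colimit2} by passing to mapping cylinders, and then to convert the resulting relative triviality statements into ind-isomorphism statements via the mapping cylinder Lemma \ref{ind-isomorphism}. First I would set $M_i = MC(f_i)$; since each $f_{i+1}$ extends $f_i$, the cylinders form an increasing chain $M_0 \subset M_1 \subset \dots$ whose union $M = MC(f)$ admits a deformation retraction $r$ onto $Y$ with $r|_X = f$. One checks routinely that $M$ is a local compactum, that each $M_i$ is compact with $M_i \subset \Int M_{i+1}$, and that $X$ is closed in $M$ with $M_i \cap X = X_i$; and (for part (b)) that $M$ is locally connected, with $\breve\pi_1(M,x) = \breve\pi_1(Y,f(x)) = 1$ for $x \in X$, since $\breve\pi_1$ is a fine shape invariant and $M \simeq Y$, while $\breve\pi_1(X,x) = 1$ is a hypothesis. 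These verifications put us in position to apply Theorem \ref{ind-colimit2} to the pair $(M, X)$ with exhaustion $(M_i, X_i)$.

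For part (a), homotopy invariance of Steenrod--Sitnikov homology gives $H_n(M) \cong H_n(Y)$ with the composite $H_n(X) \to H_n(M) \xrightarrow{r_*} H_n(Y)$ equal to $f_*$; hence $f_*$ is an isomorphism for all $n$ if and only if $H_n(M, X) = 0$ for all $n$, by the long exact sequence of the pair. By Theorem \ref{ind-colimit2}(a) this holds (for each $n$) exactly when for each $j$ there is a $k \ge j$ making $H_n(M_j, X_j) \to H_n(M_k, X_k)$ trivial, and by Lemma \ref{ind-isomorphism}(a) this last condition is equivalent to the maps $f_{i*}$ representing an ind-isomorphism. Chaining these equivalences over all $n$ proves (a).

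For part (b) the same scheme applies, with the long exact homotopy sequence of $(M, X)$ showing that $f_* : \pi_n(X, x) \to \pi_n(Y, f(x))$ is an isomorphism for all $n$ and all $x$ if and only if $\pi_n(M, X, x) = 1$ for all $n$ and all $x \in X$. Theorem \ref{ind-colimit2}(b), applied with basepoint $x$ to the tail of the exhaustion beginning at $\min\{j \mid x \in X_j\}$, converts this into the pointwise triviality of the bonding maps $\pi_n(M_j, X_j, x) \to \pi_n(M_k, X_k, x)$, and Lemma \ref{ind-isomorphism}(b) converts triviality (its condition (2)) into the ind-isomorphism condition (its condition (1), which is precisely the defining property of an ind-isomorphism). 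The $\pi_0$-representability hypothesis of Lemma \ref{ind-isomorphism}(b) holds because, by Lemma \ref{continua}, every element of $\pi_0$ of a locally connected compactum is represented by a point. The one genuinely new point is passing from the pointwise $k = k(x)$ furnished by Theorem \ref{ind-colimit2}(b) to a single $k$ valid for all $x \in X_i$, as condition (2) demands: here I would use that a locally connected compactum has only finitely many components (each being open), that $X_i$ is a Peano space so its components are path-connected, and that changing the basepoint along a path inside $X_i$ identifies the relevant bonding maps; taking the maximum of the finitely many $k$'s obtained from one representative basepoint per component yields the required uniform $k$ (and simultaneously reconciles the uniform-in-$x$ form of condition (1) with the per-$x$ ind-isomorphism statement of the theorem).

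I expect this uniformity step to be the main obstacle, since everything else is assembly of the already-established Theorem \ref{ind-colimit2}, Theorem \ref{bounded-colim}, and Lemma \ref{ind-isomorphism}. Secondarily, care is needed at the low-dimensional ($n = 0, 1$) ends of the exact sequences, where one works with pointed sets and the non-abelian $\pi_1$ rather than abelian groups, and in checking that $M$ genuinely inherits local connectedness and trivial $\breve\pi_1$ from $X$ and $Y$.
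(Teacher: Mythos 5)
Your plan follows the paper's own proof exactly --- mapping cylinder $M=MC(f)$, the relative vanishing Theorem \ref{ind-colimit2}, then Lemma \ref{ind-isomorphism} --- and your part (a) is essentially the paper's argument. The trouble is in part (b), where you twice rely on the premise that the compacta $X_i$ are locally connected (``$X_i$ is a Peano space'', ``a locally connected compactum has only finitely many components''). That premise is false: local connectedness of $X$ does not pass to compact subsets. For instance $X=\R$, $X_0=\{0\}\cup\{1/n\mid n\in\N\}$, $X_i=[-i,i]$ for $i\ge1$ satisfies every hypothesis of the theorem, yet $X_0$ is not locally connected and has infinitely many components. This vitiates both your verification of the $\pi_0$-representability hypothesis of Lemma \ref{ind-isomorphism}(b) (Lemma \ref{continua} cannot be applied to $X_{j+1}$ itself) and your uniformity argument (no finiteness of components, no path-connectedness, hence no maximum over finitely many $k$'s). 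The paper's substitute is the factorization trick: since $X$ is locally connected and $X_j\subset\Int X_{j+1}$, the inclusion $X_j\to X_{j+1}$ factors through a locally connected compactum $Z_j$, and Lemma \ref{continua} is applied to $Z_j$; this is precisely why the hypothesis of Lemma \ref{ind-isomorphism}(b) concerns only those elements of $\pi_0(X_{j+1},x)$ that \emph{come from} $\pi_0(X_j,x)$ --- they factor through $\pi_0(Z_j,x)$ and are therefore represented by points.

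The quantifier mismatch you spotted is genuine (Theorem \ref{ind-colimit2}(b) yields $k=k(x)$, while condition (2) of Lemma \ref{ind-isomorphism}(b) asks for one $k$ serving all $x\in X_i$), but the way out is not to manufacture uniformity, which your Peano argument cannot deliver and which is likely unobtainable; it is to observe that the conclusion of the theorem is itself per-$x$, and that the proof of (2)$\Rightarrow$(1) in Lemma \ref{ind-isomorphism}(b) runs for a fixed $x$: for $n\ge1$ it uses triviality only at the basepoint $x$ (at levels $n$ and $n+1$), while for $n=0$ it needs a single $k$ for the auxiliary basepoints $z\in X_{j+1}$, and only finitely many such $z$ are required because the image of $\pi_0(X_j,x)\to\pi_0(X_{j+1},x)$ factors through the finite set $\pi_0(Z_j,x)$ --- so one takes the maximum of finitely many $k(z)$'s. (The paper is terse here, but its two ``additional remarks'' supply exactly these ingredients.) Finally, your ``routine'' check that $M$ is a local compactum with each $M_i\subset\Int_M M_{i+1}$ in fact fails whenever $f$ is not proper, which the theorem allows (e.g.\ $f\:\R\to pt$): if $y\in Y_0$ and $f(x_n)\to y$, then points sufficiently high on the cylinder lines over the $x_n$ lie in the open set $\Int_M M_1$, and since $M_1\cap\big(X\x[0,1)\big)=X_1\x[0,1)$ this forces $x_n\in X_1$ for large $n$; thus interior-nesting of the $M_i$ implies properness of $f$, for any metrizable model of the cylinder. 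The paper is equally silent on this point, so it is not a deviation from the paper's route; but a complete write-up must replace the appeal to Theorem \ref{ind-colimit2} for the ambient pair $(M,X)$ by a version of that theorem for direct sequences of compact pairs, which is what its proof actually establishes --- the ambient local compactum is used there only for cofinality and for the factorizations through locally connected compacta, and both are available for the sequence $(M_j,X_j)$.
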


\begin{proof}[Proof. (a)] If $f_{i*}\:H_n(X_i)\to H_n(Y_i)$ represent an ind-isomorphism, then they induce an isomorphism
on direct limits, that is, $f_*\:H_n(X)\to H_n(Y)$ is an isomorphism.

Conversely, suppose that $f_*\:H_n(X)\to H_n(Y)$ is an isomorphism for each $n$.
Let $M$ be the metric mapping cylinder $MC(f)$ (see \cite{M00}*{\S\ref{book:mmc}}).
The composition of the inclusion $X\subset M$ and the deformation retraction $M\to Y$
equals $f$, so the inclusion induced map $H_n(X)\to H_n(M)$ is an isomorphism for each $n$.
Then $H_n(M,X)=0$ for all $n$.
Let $M_i=MC(f_i)$.
Then by Theorem \ref{ind-colimit2} for each $n$ and $j$ there exists a $k\ge j$ such that the inclusion induced map 
$H_n(M_j,X_j)\to H_n(M_k,X_k)$ is trivial.
Hence by Lemma \ref{ind-isomorphism} the induced maps $f_{i*}\:H_n(X_i)\to H_n(Y_i)$ 
represent an ind-isomorphism for all $n$.
\end{proof}

\begin{proof}[(b)] This is similar to the proof of (a), using additionally the following remarks.
Firstly, since $X$ and $Y$ are locally connected, so is $M$ (see \cite{M00}*{\S\ref{book:mmc}}).
Secondly, since $X$ is locally connected, each inclusion map $X_j\to X_{j+1}$ factors through 
a locally connected compactum $Z_j$ \cite{M1}*{proof of Theorem 6.11(a)}.
Then for each $x\in X_j$, every $\alpha\in\pi_0(Z_j,x)$ is represented by a point $z\in Z_j$.
Hence the image of $\alpha$ in $\pi_0(X_{j+1},x)$ is represented by the image of $z$.
\end{proof}

\begin{remark} Using Remark \ref{coherent} it is straightforward to extend Theorem \ref{ind-isomorphism1}
to the setting where $f$ is replaced by a coherent map in the sense of sequential strong antishape.
\end{remark}

\begin{remark} Using Proposition \ref{ind-0} it is straightforward to prove a version of 
Theorem \ref{ind-isomorphism1} where $f$ is replaced by a simplicial map between countable 
simplicial complexes.
Using this result, Sakai's approximation theorem (see \cite{M00}*{Corollary \ref{book:simp-appr}})
as well as \cite{M00}*{Theorem \ref{book:poly-cofinal}} and \cite{M00}*{Theorem \ref{book:mc-theorem}},
one can also deduce a version of Theorem \ref{ind-isomorphism1} where $f$ is replaced by 
a continuous map between separable polyhedra, and the direct sequences $X_0\subset X_1\subset\dots$
and $Y_0\subset Y_1\subset\dots$ are replaced by the direct systems of all compact subsets.
\end{remark}

\section{Whitehead-type theorem}

Theorem \ref{Wh-thm} is a consequence of the following result along with Lemma \ref{Cech}.

\begin{theorem} \label{Wh-thm1}
Let $X$ and $Y$ are locally connected finite-dimensional local compacta with trivial $\breve\pi_0$ and $\breve\pi_1$
and let $d=\max(\dim X+2,\,\dim Y+1)$.
If $f\:(X,x)\to (Y,y)$ is a fine shape morphism inducing a bijection on $\pi_n$ for $n<d$ and a surjection on $\pi_d$,
then $f$ is a fine shape equivalence.
\end{theorem}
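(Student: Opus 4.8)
The plan is to replace $f$ by the inclusion of $X$ into a mapping cylinder, to translate the hypotheses into the vanishing of the relative Steenrod--Sitnikov homotopy groups, to upgrade that vanishing to an \emph{ind}-vanishing by means of the results of the preceding sections, and finally to build an approaching deformation retraction by obstruction theory. The finite-dimensionality is what makes the last step terminate, and it is also what makes the calibration $d=\dim M+1$ the right one.

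First I would represent the fine shape morphism $f$ by a coherent map in the sense of sequential strong antishape, as in Remark \ref{coherent}, carried by proper maps $f_i\:U_i^\circ\to V_i^\circ$ with connecting homotopies $F_i$, and form the mapping cylinders $N_i^\circ=MC(f_i)$; let $M$ denote the resulting mapping cylinder of $f$, so that $X\subset M$ is closed and $M$ fine-shape deformation retracts onto $Y$. Then $f$ is a fine shape equivalence if and only if $X\hookrightarrow M$ is one. The topological hypotheses pass to $M$: it is locally connected (as $X$ and $Y$ are, by the mapping-cylinder remarks already used in the proof of Theorem \ref{ind-isomorphism1}(b)) and finite-dimensional with $\dim M=\max(\dim X+1,\dim Y)=d-1$, while $\breve\pi_0(M)=\breve\pi_1(M)=\breve\pi_0(Y)=\breve\pi_1(Y)=1$ since $M\simeq Y$. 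Note that $\breve\pi_0=\breve\pi_1=1$ together with local connectedness make $X$ shape-connected, so the hypothesis at the single basepoint $x$ gives the corresponding statement at every basepoint (equivalently, for every base ray).

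Next I would compute relative groups. For each basepoint the map $\pi_n(X,x)\to\pi_n\big(Y,f(x)\big)\cong\pi_n(M,x)$ is a bijection for $n<d$ and a surjection for $n=d$; feeding this into the exact sequence of the pair $(M,X)$ (the low-dimensional terms behaving because $\breve\pi_0=\breve\pi_1=1$, with Lemma \ref{Cech} mediating between $\pi$ and $\breve\pi$) yields $\pi_n(M,X,x)=1$ for every $n\le d$ and every $x$. I would then apply Theorem \ref{ind-colimit2}(b) to the pair $(M,X)$, whose hypotheses of local connectedness and trivial $\breve\pi_1$ have just been verified, to promote each equality $\pi_n(M,X,x)=1$ to the \emph{ind}-statement that for every $n\le d$ and every $j$ there is a $k\ge j$ with the inclusion induced map $\pi_n(M_j,X_j,x)\to\pi_n(M_k,X_k,x)$ trivial, where $M_i=MC(f_i)$. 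This is the step where the algebra of Theorem \ref{bounded-colim} is genuinely used, and it is exactly what separates the assertion from the naive statement that a colimit vanishes.

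The remaining, and hardest, step is to convert this ind-triviality into an approaching deformation of $M$ into $X$. Working with the polyhedral pairs approximating $(M_i,X_i)$ and with base rays $r$ in place of basepoints (to sidestep the representability issues of $\pi_0$; compare Remark \ref{coherent} and the base-ray groups $\Pi_n$), I would build, by induction on the dimension of skeleta, an $r$-pointed approaching homotopy pushing the cells of an $M$-approximation into an $X$-approximation. The obstruction to extending the partial deformation across a $k$-cell is a class in $\pi_k(M,X)$, and the discrepancy between two extensions is measured in $\pi_{k+1}(M,X)$; the ind-triviality lets me annihilate each such class after increasing the compactum index, so the deformation of $M_j$ lands in some $X_k$ with $k>j$ rather than in $X_j$ itself. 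Because $\dim M=d-1$, only the groups $\pi_n(M,X)$ with $n\le d$ enter, the induction terminates, and this is precisely why the hypothesis is imposed only up to a surjection on $\pi_d$. The principal difficulties here are the uniform control of the index function $k=k(j)$ across all base rays and all skeleta, and the verification that the maps stay $X$-$Y$-approaching throughout; the first is tamed by the finiteness coming from local connectedness (Lemma \ref{continua}), the second by the finite-dimensional, skeletonwise nature of the construction. Assembling the stagewise deformations produces a fine shape morphism $M\to X$ that is the identity on $X$ up to approaching homotopy and whose composite with the inclusion is approaching-homotopic to $\id_M$; hence $X\hookrightarrow M$, and therefore $f$, is a fine shape equivalence.
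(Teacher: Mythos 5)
Your outline---mapping cylinder, vanishing of relative groups, promotion to ind-vanishing, then skeletal obstruction theory bounded by finite dimension---does match the paper's strategy in broad shape, but the object at the center of your argument does not exist, and this is not a repairable detail. A fine shape morphism between non-compact spaces is not representable by a map $X\to Y$, so there is no ``mapping cylinder of $f$'' containing $X$ as a closed subspace. The cylinders $N_i^\circ=MC(f_i)$ from Remark \ref{coherent} are cylinders of maps between the \emph{complements} $U_i^\circ=U_i\but X_i\to V_i^\circ=V_i\but Y_i$; they contain neither $X_i$ nor $Y_i$, and since the squares relating $f_i$ to $f_{i+1}$ commute only up to the proper homotopies $F_i$, there are not even inclusions $N_i^\circ\hookrightarrow N_{i+1}^\circ$ with which to assemble a space $M$. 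Consequently Theorem \ref{ind-colimit2}(b), which is a statement about a pair of local compacta, cannot be applied to ``$(M,X)$''. The paper's Step 1 exists precisely to deal with this: it builds two-parameter mapping telescopes $P$ and $Q$, straightens an approaching representative $F\:P\to Q$ of $f$ until it respects the grid (Lemma \ref{straightening}, Figure \ref{straightening-fig}), and only then forms $M=MC(F)$ \emph{at the polyhedral level}; the role of your $\pi_n(M,X,x)$ is played throughout by the base-ray groups $\Pi_n(M_j,P_j,p_j)$, with Lemma \ref{2-mc} needed just to identify each $M_j$ as a telescope of the compact cylinders $M_{ij}=MC(F_{ij})$.

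Second, even granting the conclusion you want from Theorem \ref{ind-colimit2}(b), it is too weak for your obstruction step. The obstruction to pushing a $k$-cell into the $X$-side lives in a polyhedral group $\pi_k(M_{ij},P_{ij},p_{ij})$, not in a Steenrod--Sitnikov group of a compact pair; the latter is an extension of $\lim_i$ by $\lim^1_i$ of the former, and triviality of an induced map on $\lim$ and $\lim^1$ does \emph{not} imply triviality of the map of towers. What the cell-by-cell induction of Step 3 actually needs is that (after reindexing) the bonding maps $\pi_n(M_{ij},P_{ij})\to\pi_n(M_{i,\,j+1},P_{i,\,j+1})$ are themselves trivial for $n\le d$, and extracting this from the ind-statement is the real content of the paper's Step 2: coning off $1$-skeleta to invoke Serre finiteness, applying Theorem \ref{bounded-colim}(a,c), descending from $\lim$/$\lim^1$-triviality to pro-triviality via the factorization lemmas (Theorem \ref{factorization-main}, Lemma \ref{level-factorization}) and the Mittag-Leffler condition, and re-straightening with Lemma \ref{straightening}. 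The paper's remark following the statement of Theorem \ref{Wh-thm1} says exactly this: even when $f$ \emph{is} a genuine map, one cannot simply cite Theorem \ref{ind-colimit2}, because ``much of these arguments will anyway have to be redone in order to achieve the desired stronger conclusions.'' Your sentence ``the ind-triviality lets me annihilate each such class after increasing the compactum index'' conceals precisely this missing descent. A further small inaccuracy: the final amendment in Step 3, which pushes all of $M$ rather than only its $d$-skeleton into $P$, consumes the condition $\colim_j\lim^1_i\pi_{d+1}(M_{ij},P_{ij},p_{ij})=0$, i.e.\ dimension-$(d{+}1)$ data coming from the surjectivity hypothesis on $\pi_d$; your accounting, which stops at $n\le d$, misses where that hypothesis is actually spent.
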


When $f$ is represented by a map $(X,x)\to (Y,y)$, the proof can be slightly simplified: in this case there is an obvious shortcut
to the second half of Step 1, and in Step 2 one can refer directly to Theorem \ref{ind-colimit2} instead of repeating the arguments 
from its proof --- although much of these arguments will anyway have to be redone in order to achieve the desired stronger conclusions.

\begin{proof} The proof naturally splits into three steps.

\subsection{Step 1: Representing $f$ by a two-parameter h-level map}

Since $X$ is a local compactum, it can be represented as the union of a chain of its compact subsets $X_0\subset X_1\subset\dots$
such that each $X_j\subset\Int X_{j+1}$ (see \cite{M00}*{Proposition \ref{book:local compactum}}).
Then $X$ is homotopy equivalent to the mapping telescope $X_{[0,\infty)}$ (see \cite{M00}*{Proposition \ref{book:lc-telescope-a}}).
On the other hand, since $X$ is locally connected, each inclusion map $X_j\to X_{j+1}$ factors through a locally connected compactum 
$K_j$ \cite{M1}*{proof of Theorem 6.11(a)}.
It is not hard to see that $X_{[0,\infty)}$ is homotopy equivalent to $K_{[0,\infty)}$. 
(In fact, they are also homotopy equivalent to the mapping telescope of the direct sequence 
$X_0\subset K_0\to X_1\subset K_1\to\dots$.)
We may assume that $x\in X_0$, and then we also have $x\in K_0\subset K_1\subset\dots$.
Writing $x_i$ for the copy of $x$ in $K_i$, we get a ray $x_{[0,\infty)}\subset K_{[0,\infty)}$.
The previous arguments work to show that $(X,x)$ is homotopy equivalent to $(K_{[0,\infty)},\,x_{[0,\infty)})$,
as well as to $(K_{[0,\infty)},\,x_0)$.

Similarly, $Y$ is the union of a chain of its compact subsets $Y_0\subset Y_1\subset\dots$ such that $y\in Y_0$ and
each $Y_j\subset\Int Y_{j+1}$.
Each inclusion map $Y_j\to Y_{j+1}$ factors through a locally connected compactum $L_j$, and $Y$ is homotopy equivalent to 
the mapping telescope $L_{[0,\infty)}$.
Moreover, writing $y_i$ for the copy of $y$ in $L_i$, we have $(Y,Y)\simeq(L_{[0,\infty)},\,y_{[0,\infty)})$
and $(Y,y)\simeq(L_{[0,\infty)},\,y_0)$.

Let us represent $K_j$ as the limit of an inverse sequence of compact polyhedra $\PP_j=\big(\dots\to P_{1j}\to P_{0j}=pt\big)$.
Let $p_{ij}$ be the image of $x_j$ in $P_{ij}$.
Each composition $K_j\to X_{j+1}\to K_{j+1}$ extends to a map $P_{[0,\infty],\,j}\to P_{[0,\infty],\,j+1}$ of the extended mapping
telescopes which comes from an h-inv-map $\phi_j\:\PP_j\to\PP_{j+1}$ and sends the extended mapping telescope $p_{[0,\infty],\,j}$ 
of the singletons $\dots\to\{p_{1j}\}\to\{p_{0j}\}$ into $p_{[0,\infty],\,j+1}$ (see \cite{M00}*{Theorem \ref{book:strongshape}(a)
and Remark \ref{book:telescope0}} or \cite{M1}*{proof of Lemma 2.1(a)}).
By using Lemma \ref{straightening} we may assume that each $\phi_j$ is an h-level map.
The mapping telescope $P_{[0,\infty],\,[0,\infty)}$ of the direct sequence $\PP=\big(P_{[0,\infty],\,0}\to P_{[0,\infty],\,1}\to\dots\big)$ 
is an absolute retract which contains $K_{[0,\infty)}$ as a closed Z-set.
The polyhedron $P_{[0,\infty),\,[0,\infty)}$ will be also called $P$, and every its subpolyhedron of the form
$P_{[0,\infty),J}$, where $J\subset [0,\infty)$, will be also called $P_J$.

Similarly, we represent $L_j$ as the limit of an inverse sequence of compact polyhedra $\QQ_j=\big(\dots\to Q_{1j}\to Q_{0j}=pt\big)$
and let $q_{ij}$ be the image of $y_j$ in $Q_{ij}$.
Each composition $L_j\to Y_{j+1}\to L_{j+1}$ extends to a map $Q_{[0,\infty],\,j}\to Q_{[0,\infty],\,j+1}$ which comes from 
an h-level map $\psi_j\:\QQ_j\to\QQ_{j+1}$ and sends the base ray $q_{[0,\infty),\,j}$ into the base ray $q_{[0,\infty),\,j+1}$.
The mapping telescope $Q_{[0,\infty],\,[0,\infty)}$ of the direct sequence $\big(Q_{[0,\infty],\,0}\to Q_{[0,\infty],\,1}\to\dots\big)$ 
is an absolute retract which contains $L_{[0,\infty)}$ as a closed Z-set.
We will use the abbreviations $Q\bydef Q_{[0,\infty),\,[0,\infty)}$ and $Q_J\bydef Q_{[0,\infty),J}$.

The fine shape morphism $(K_{[0,\infty)},\,x_{[0,\infty)})\xr{\simeq}(X,x)\xr{f} (Y,y)\xr{\simeq}(L_{[0,\infty)},\,y_{[0,\infty)})$ 
is represented by a $K_{[0,\infty)}$-$L_{[0,\infty)}$-approaching map $F\:P\to Q$ which sends the base quadrant $p_{[0,\infty),\,[0,\infty)}$ 
into the base quadrant $q_{[0,\infty],\,[0,\infty)}$ (see \cite{M-I}*{Proposition \ref{fish:representation}}).
Since $F$ is $K_{[0,\infty)}$-$L_{[0,\infty)}$-approaching, it sends each $P_{[0,j]}$ into some $Q_{[0,k]}$, and by omitting some of 
the $Y_j$ we may assume that $k=j$.
Then upon amending $F$ by a $K_{[0,\infty)}$-$L_{[0,\infty)}$-approaching homotopy we may assume that
it additionally sends each $P_j$ into $Q_j$.
A further amendment of the same type ensures that $F$ sends each $P_{[j-1,\,j]}$ into $Q_{[j-1,\,j]}$.
Next, since $F$ is $K_{[0,\infty)}$-$L_{[0,\infty)}$-approaching, it sends each $P_{[j-1,\,j]}$ into $Q_{[j-1,\,j]}$ 
by a proper map, and hence for each $i$ there exists an $n_{ij}$ such that $F$ sends
$P_{[n_{ij},\,\infty),\,[j-1,\,j]}$ into $Q_{[i,\infty),\,[j-1,\,j]}$.
Since we are free to increase this $n_{ij}$, we may assume that each $n_{i+1,\,j}>n_{ij}$ and each $n_{ij}\ge n_{i,\,j-1}$.

Let $(Q'_{kj},q'_{kj})=(Q_{ij},q_{ij})$ for all $k\in (n_{ij},n_{i+1,\,j}]$.
The new bonding map $Q'_{kj}\to Q'_{k-1,\,j}$ is the original bonding map $Q_{ij}\to Q_{i-1,\,j}$ when $k$ is of the form $n_{ij}+1$,
and the identity map otherwise.
Thus we get a new inverse sequence $\QQ'_j=\big(\dots\to Q'_{1j}\to Q'_{0j}=pt\big)$.
If $k\in (n_{l,\,j-1},n_{l+1,\,j-1}]$ and $k\in (n_{ij},n_{i+1,\,j}]$, then $i\le l$ (indeed, $i\ge l+1$ would imply 
$k>n_{ij}\ge n_{l+1,\,j}\ge n_{l+1,\,j-1}\ge k$, which is contradictory).
In this case we can define $\psi'_{k,\,j-1}\:Q'_{k,\,j-1}\to Q'_{kj}$ to be the composition 
$Q_{l,\,j-1}\to Q_{i,\,j-1}\xr{\psi_{i,\,j-1}} Q_{ij}$.
These clearly combine into an h-level map $\psi'_{j-1}\:\QQ'_{j-1}\to\QQ'_j$.
This results in a map $Q'_{[0,\infty),\,j-1}\to Q'_{[0,\infty),\,j}$, which combines with the composition $K_{j-1}\to X_j\to K_j$
into a continuous map $Q'_{[0,\infty],\,j-1}\to Q'_{[0,\infty],\,j}$.
The mapping telescope $Q'_{[0,\infty],\,[0,\infty)}$ of the direct sequence $\big(Q'_{[0,\infty],\,0}\to Q'_{[0,\infty],\,1}\to\dots\big)$
is an absolute retract which contains $L_{[0,\infty)}$ as a closed Z-set.
We will use the abbreviations $Q'\bydef Q'_{[0,\infty),\,[0,\infty)}$ and $Q'_J\bydef Q'_{[0,\infty),J}$.

There is an obvious $L_{[0,\infty)}$-$L_{[0,\infty)}$-approaching homotopy equivalence 
$H\:Q\to Q'$ which sends every $Q_{[j-1,\,j]}$ into $Q'_{[j-1,\,j]}$ and the base quadrant $q_{[0,\infty),\,[0,\infty)}$ 
into the base quadrant $q'_{[0,\infty),\,[0,\infty)}$ (see Figure \ref{straightening-fig}).
Moreover, since $F$ sends each $P_{[n_{ij},\,\infty),\,[j-1,\,j]}$ into $Q_{(i-1,\,\infty),\,[j-1,\,j]}$, it is not hard 
to construct a $K_{[0,\infty)}$-$L_{[0,\infty)}$-approaching homotopy from $F$ to an $\tilde F$ such that
the composition $F'\:P\xr{\tilde F} Q\xr{H} Q'$ sends each $P_{[k-1,\infty),\,[j-1,j]}$ into 
$Q_{[k-1,\infty),\,[j-1,\,j]}$ (see Figure \ref{straightening-fig}).
Then upon amending $F'$ by a $K_{[0,\infty)}$-$L_{[0,\infty)}$-approaching homotopy we may assume that
it additionally sends each $P_{k-1,\,[j-1,\,j]}$ into $Q'_{k-1,\,[j-1,\,j]}$.
A further amendment of the same type ensures that $F'$ sends each $P_{[k-1,\,k],\,[j-1,\,j]}$ into 
$Q'_{[k-1,\,k],\,[j-1,\,j]}$.
This implies that it also sends each $P_{k,\,[j-1,\,j]}$ into $Q'_{k,\,[j-1,\,j]}$, each
$P_{[k-1,\,k],\,j}$ into $Q'_{[k-1,\,k],\,j}$ and each $P_{kj}$ into $Q'_{kj}$.

\begin{figure}[h]
\includegraphics[width=14.5cm]{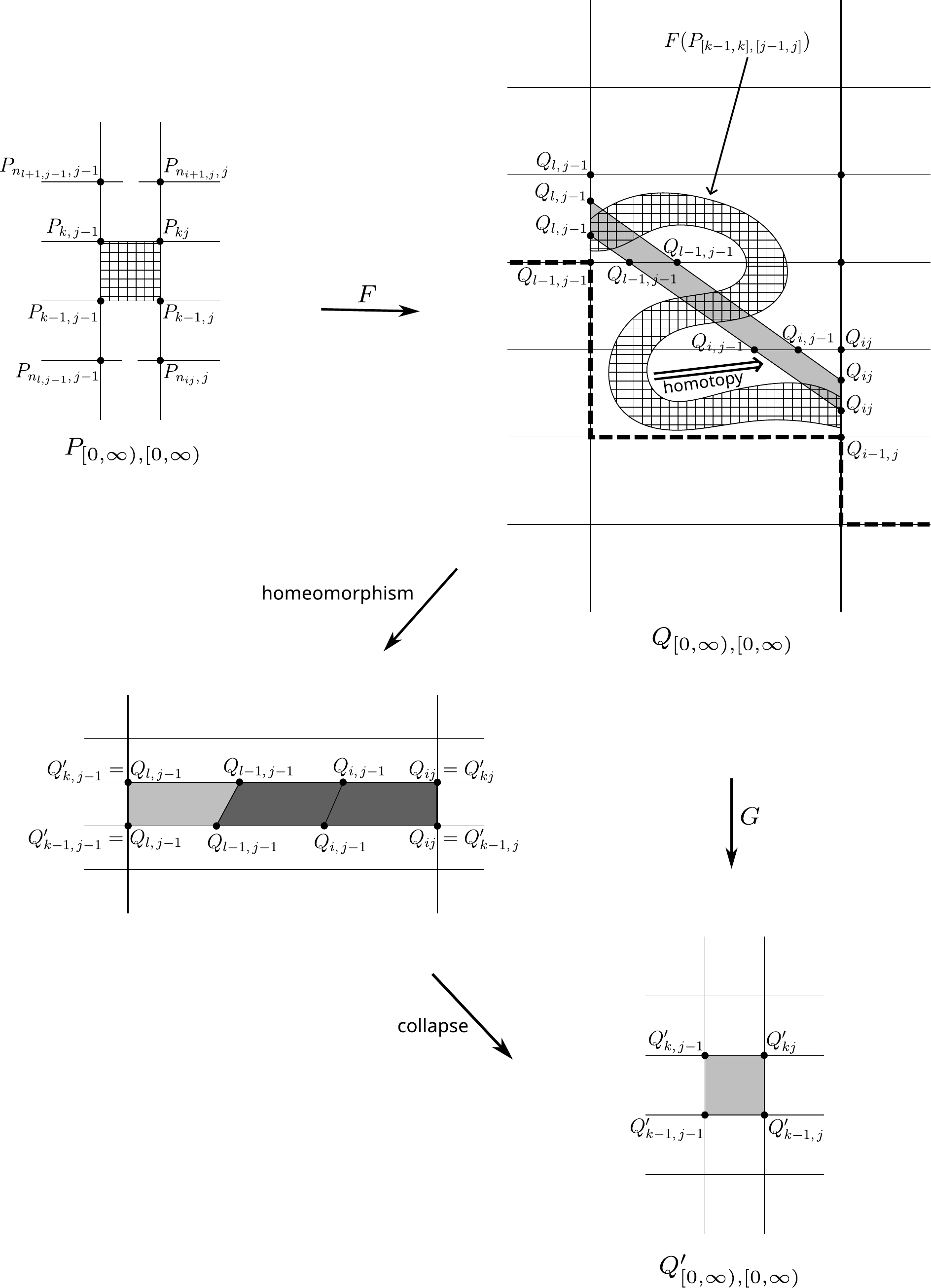}
\caption{The maps $F$ and $G$ and the homotopy from $F$ to $\tilde F$.}
\label{straightening-fig}
\end{figure}

In what follows we will no longer need $Q_{ij}$, $q_{ij}$, $\psi_{ij}$ and $F$. 
So we will recycle this notation to denote $Q'_{ij}$, $q'_{ij}$, $\psi'_{ij}$ and $F'$, respectively, 
and similarly for their derivatives.

As noted above, $F$ restricts to maps $F_{ij}\:P_{ij}\to Q_{ij}$ and yields homotopies 
$h_{ij}\:P_{ij}\x I\to Q_{i+1,\,j}$ and $h'_{ij}\:P_{ij}\to Q_{i,\,j+1}$
which make the corresponding square diagrams homotopy commutative, and also to $2$-homotopies 
$H_{ij}\:P_{ij}\x I^2\to Q_{i+1,\,j+1}$ which extend the six homotopies on the faces of the cube
(namely, $h_{ij}$, $h_{i,\,j+1}$, $h'_{ij}$, $h'_{i+1,\,j}$ and the homotopies associated with 
the h-level maps $\phi_j$ and $\psi_j$).
Upon amending $F$ by a yet another $K_{[0,\infty)}$-$L_{[0,\infty)}$-approaching homotopy 
we may assume that it is the map which comes from the $F_{ij}$, $h_{ij}$, $h'_{ij}$ and $H_{ij}$.
Since $F$ sends the base quadrant $p_{[0,\infty),\,[0,\infty)}$ into the base quadrant 
$q_{[0,\infty),\,[0,\infty)}$, each $F_{ij}$ sends the basepoint $p_{ij}$ into the basepoint $q_{ij}$,
and the homotopies $h_{ij}$ and $h'_{ij}$ and the $2$-homotopy $H_{ij}$ similarly respect the basepoints.

\subsection{Step 2: Algebra}

For a subset $J\subset [0,\infty)$ let $F_J\:P_J\to Q_J$ be the restriction of $F$ and let $M_J$ 
be the metric mapping cylinder $MC(F_J)$ (see \cite{M00}*{\S\ref{book:mmc}}).
Also, for a $j\in [0,\infty)$ let $p_j=p_{[0,\infty),j}$ and $q_j=q_{[0,\infty),j}$,
and let $m_j$ be the metric mapping cylinder of $F|_{p_j}\:{p_j}\to {q_j}$.

The composition $(P,p_0)\subset(M,p_0)\subset(M,m_0)\xr{r} (Q,q_0)$,
where $r$ is a deformation retraction and the second inclusion is also a homotopy equivalence,
coincides with $F\:(P,p_0)\to (Q,q_0)$, which in turn represents the fine shape morphism 
$(K_{[0,\infty)},\,x_0)\xr{\simeq}(X,x)\xr{f} (Y,y)\xr{\simeq}(L_{[0,\infty)},\,y_0)$.
Hence the inclusion induced map $\Pi_n(P,p_0)\to\Pi_n(M,p_0)$ is a bijection
for each $n<d$ and a surjection for $n=d$, where $d=\max(\dim X+2,\,\dim Y+1)=\dim M-1$.
Then $\Pi_n(M,P,p_0)$ is trivial for $n\le d$.
On the other hand, clearly $\Pi_n(M,P,p_0)\simeq\colim\Pi_n(M_{[0,j]},P_{[0,j]},p_0)$.
Each $P_{[0,j]}$ deformation retracts onto $P_j$, each $Q_{[0,j]}$ deformation retracts onto $Q_j$,
and each $(M_{[0,j]},M_j)$ deformation retracts onto $(Q_{[0,j]},Q_j)$.
Using these deformation retractions, it is not hard to see that each $(M_{[0,j]},P_{[0,j]},p_0)$ 
is homotopy equivalent to $(M_j,P_j,p_j)$, and moreover these homotopy equivalences commute up to
homotopy with the bonding maps.
Hence $\colim\Pi_n(M_j,P_j,p_j)$ is trivial for each $n\le d$.

Let $M_{ij}=MC(F_{ij})$.
Lemma \ref{2-mc} yields an inverse sequence $\dots\to M_{1j}\to M_{0j}$ and identifies $M_j$ 
with its mapping telescope.
Then we have the following commutative diagram with exact rows:
\[\begin{tikzcd}[column sep=2em]
1\rar &\lim\limits_i\!^1\,\pi_{n+1}(M_{ij},\,P_{ij},\,p_{ij})\rar\dar &\Pi_n(M_j,P_j,p_j)
\rar\dar &\lim\limits_i\pi_n(M_{ij},\,P_{ij},\,p_{ij})\rar\dar &1\\
1\rar &\lim\limits_i\!^1\,\pi_{n+1}(M_{ik},\,P_{ik},\,p_{ik})\rar &\Pi_n(M_k,P_k,p_j)
\rar &\lim\limits_i\pi_n(M_{ik},\,P_{ik},\,p_{ik})\rar &1
\end{tikzcd}\] 
where $k\ge j$ and $n\ge 1$ (see \cite{M1}*{proof of Theorem 3.1(b)}).
Writing $G_{ij}^n=\pi_n(M_{ij},P_{ij},p_{ij})$, from this diagram we conclude that 
$\colim_j\lim_i G^n_{ij}=1$ for $1\le n\le d$ and that $\colim_j\lim^1_i G^n_{ij}=0$ for $2\le n\le d+1$.

We have $\colim\check\pi_0(K_j,x_j)\simeq\colim\check\pi_0(X_j,x_j)\simeq\breve\pi_0(X,x)=*$.
Since $K_j$ is locally connected, by Lemma \ref{continua} $\check\pi_0(K_j,x_j)$ is finite.
Then for each $j$ there exists a $k$ such that the map $\check\pi_0(K_j,x_j)\to\check\pi_0(K_k,x_j)$ 
is trivial (see Proposition \ref{obvious}).
By omitting some of the $X_i$ we may assume that $k=j+1$.
Writing $D_{ij}=\pi_0(P_{ij},p_{ij})$, we get that the map $\lim_i D_{ij}\to\lim_i D_{i,\,j+1}$ is trivial.
If $E_{ij}$ denotes the image of the map $D_{ij}\to D_{i,\,k_j}$, then by Lemma \ref{level-factorization4} 
$\lim_i E_{ij}$ is trivial.
The tower of finite sets $\dots\to E_{i1}\to E_{i0}$ also satisfies the Mittag-Leffler condition, 
and hence it is trivial as a pro-pointed-set, 
that is, for each $i$ there exists an $l>i$ such that the map $E_{lj}\to E_{ij}$ is trivial 
(see \cite{M1}*{Lemma 3.4(a)}).
Then the composition $D_{lj}\to E_{lj}\to E_{ij}\to D_{i,\,j+1}$ is trivial.
Using Lemma \ref{straightening} (applied simultaneously to the $P_{ij}$ and $Q_{ij}$, so as to keep $F$), 
we may assume that already the maps $D_{ij}\to D_{i,\,j+1}$ are trivial.
Similarly, since each $L_j$ is locally connected, we may assume that the maps 
$\pi_0(Q_{ij},q_{ij})\to\pi_0(Q_{i,\,j+1},q_{i,\,j+1})$ are trivial.
Then the maps $\pi_0(M_{ij},p_{ij})\to\pi_0(M_{i,\,j+1},p_{i,\,j+1})$ are also trivial.

Since the $K_j$ and $L_j$ are locally connected, and $\breve\pi_1(X,x)$ and $\breve\pi_1(Y,y)$ are trivial,
similarly to the proof of Theorem \ref{ind-colimit}(b) we may assume that the maps 
$\pi_1(P_{ij},p_{ij})\to\pi_1(P_{i,\,j+1},p_{i,\,j+1})$ and
$\pi_1(Q_{ij},q_{ij})\to\pi_1(Q_{i,\,j+1},q_{i,\,j+1})$ are trivial.
Then so are the maps $\pi_1(M_{ij},p_{ij})\to\pi_1(M_{i,\,j+1},p_{i,\,j+1})$.

Let $(T,S)$ be a triangulation of $(M_{ij},P_{ij})$ which has $p_{ij}$ as a vertex, and
let $M_{ij}^{(k)}$ and $P_{ij}^{(k)}$ denote the $k$-skeleta of $T$ and $S$.
Then the map $(M_{ij},P_{ij},p_{ij})\to (M_{i,\,j+1},P_{i,\,j+1},p_{i,\,j+1})$ factors through 
$(M_{ij}\cup C(M_{ij}^{(0)}),P_{ij}\cup C(P_{ij}^{(0)}),\,p_{ij})$, where
$CQ$ denotes the cone over the polyhedron $Q$.
Consequently the map $(M_{ij},P_{ij},p_{ij})\to (M_{i,\,j+2},P_{i,\,j+2},p_{i,\,j+2})$ factors through 
$(\hat M_{ij},\hat P_{ij},\,p_{ij})$, where $\hat P_{ij}=P_{ij}\cup C(P_{ij}^{(1)})$ and 
$\hat M_{ij}=M_{ij}\cup C(M_{ij}^{(1)})$.
Also each bonding map $(M_{i+1,j},P_{i+1,j})\to (M_{ij},P_{ij})$ extends to a map 
$(\hat M_{i+1,j},\hat P_{i+1,j})\to(\hat M_{ij},\hat P_{ij})$.

Let $\hat G_{ij}^n=\pi_n(\hat M_{ij},\hat P_{ij},p_{ij})$.
Since $\pi_1(\hat M_{ij},p_{ij})=1$, the boundary map $\pi_2(\hat P_{ij},p_{ij})\to\pi_2(\hat M_{ij},\hat P_{ij},p_{ij})$ is surjective, 
and hence the group $\hat G_{ij}^2$ is abelian.
Since $\pi_1(\hat M_{ij},p_{ij})=1$ and $\pi_0(\hat M_{ij},p_{ij})=*$, we have $\hat G_{ij}^1=1$.
Finally, since $\pi_0(\hat P_{ij},p_{ij})=*$, we have $\hat G_{ij}^0=*$.

Since $\pi_1(\hat M_{ij},p_{ij})=1$ and $\pi_1(\hat P_{ij},p_{ij})=1$, the abelian groups $\pi_n(\hat M_{ij},p_{ij})$ and 
$\pi_n(\hat P_{ij},p_{ij})$, where $n\ge 2$, are finitely generated by Serre's theorem (see \cite{Sp}*{9.6.16}).
Therefore for $n\ge 2$ the groups $\hat G_{ij}^n$ are all abelian and finitely generated.
Since $\colim_j\lim_i\hat G^n_{ij}\simeq\colim_j\lim_i G^n_{ij}=1$, by Theorem \ref{bounded-colim}(a) for each $j$ 
there exists a $k>j$ such that the map $\lim_i\hat G^n_{ij}\to\lim_i\hat G^n_{ik}$ is trivial for $2\le n\le d$.
By omitting some of the $X_k$ and $Y_k$ we may assume that $k=j+1$.
Since $\colim_j\lim^1_i\hat G^n_{ij}\simeq\colim_j\lim^1_i G^n_{ij}=1$, by Theorem \ref{bounded-colim}(c) for each $j$ 
there exists a $k>j$ such that the map $\lim^1_i\hat G^n_{ij}\to\lim^1_i\hat G^n_{ik}$ is trivial for $2\le n\le d$.
By omitting some of the $X_k$ and $Y_k$ we may assume that $k=j+1$.

Thus the level map $\hat G_{ij}^n\to\hat G_{i,\,j+1}^n$ induces trivial maps on $\lim$ and on $\lim^1$ for $2\le n\le d$.
Then by Lemma \ref{level-factorization}(c) it factors through a tower of abelian groups $H^n_{ij}$ such that both 
$\lim_i H^n_{ij}$ and $\lim^1_i H^n_{ij}$ are trivial. 
Then the tower $\dots\to H^n_{1j}\to H^n_{0j}$ satisfies the Mittag-Leffler condition (see \cite{M00}*{Theorem \ref{book:gray}(b)}),
and hence is trivial as a pro-group, that is, for each $i$ there exists an $l>i$ such that the map $H^n_{lj}\to H^n_{ij}$ 
is trivial (see \cite{M1}*{Lemma 3.4(a)}) for $2\le n\le d$.
Therefore the composition $\hat G^n_{lj}\to H^n_{lj}\to H^n_{ij}\to\hat G^n_{i,\,j+1}$ is trivial for $2\le n\le d$.
Using Lemma \ref{straightening} (applied simultaneously to the $P_{ij}$ and $Q_{ij}$) we may assume that already the maps 
$\hat G^n_{ij}\to\hat G^n_{i,\,j+1}$ are trivial for $2\le n\le d$.
The latter also holds for $n=0,1$ since $G^0_{ij}=*$ and $G^1_{ij}=1$.
Then also the compositions $G^n_{ij}\to\hat G^n_{ij}\to\hat G^n_{i,\,j+1}\to G^n_{i,\,j+3}$ are trivial for $n\le d$.
By omitting some of the $X_i$ and $Y_i$ we may assume that already the maps $G^n_{ij}\to G^n_{i,\,j+1}$ are trivial for $n\le d$.

\subsection{Step 3: Construction of an inverse of $f$ in fine shape}

This step is similar to \cite{M1}*{proof of Theorem 3.6}.
By an induction on $k=0,1,\dots,d-1$, the bonding map $\mu_{k+1}\:M_{ij}\to M_{i,\,j+k+1}$ 
is homotopic keeping $P_{ij}$ fixed to a map sending the $k$-skeleton $M_{ij}^{(k)}$ into $P_{i,\,j+k+1}$.
Since $\dim M_{ij}\le d-1$, we eventually obtain a homotopy $\rho_{ij}$ keeping $P_{ij}$ fixed from 
the bonding map $\mu_d\:M_{ij}\to M_{i,\,j+d}$ to a map $M_{ij}\to P_{i,\,j+d}$.

Given subcomplexes $I$, $J$ of the triangulation of $[0,\infty)$ with integer vertices, let $M_{I,J}$ denote 
the mapping cylinder of the restriction $P_{I,J}\to Q_{I,J}$ of $F$.
Since $\dim M_{[i,\,i+1],\,j}\le d$, by an inductive construction just like the previous one we extend the homotopies 
$\mu_{d+1} \rho_{ij}$ and $\mu_{d+1} \rho_{i+1,\,j}$ to a homotopy $\rho_{[i,\,i+1],\,j}$ keeping $P_{[i,\,i+1],\,j}$ fixed
from the bonding map $\mu_{2d+1}\:M_{[i,\,i+1],\,j}\to M_{[i,\,i+1],\,j+2d+1}$ to a map 
$M_{[i,\,i+1],\,j}\to P_{[i,\,i+1],\,j+2d+1}$.
In a similar way we also extend the homotopies $\mu_{d+1} \rho_{ij}$ and $\mu_{d+1} \rho_{i,\,j+1}$
to a homotopy $\rho_{i,\,[j,\,j+1]}$ keeping $P_{i,\,[j,\,j+1]}$ fixed from the bonding map 
$\mu_{2d+1}\:M_{i,\,[j,\,j+1]}\to M_{i,\,[j+2d+1,\,j+2d+2]}$ to a map $M_{i,\,[j,\,j+1]}\to P_{i,\,[j+2d+1,\,j+2d+2]}$.

Finally, let $M_{\square ij}=M_{\{i,\,i+1\},\,[j,\,j+1]}\cup M_{[i,\,i+1],\,\{j,\,j+1\}}$ 
and let $\rho_{\square ij}\:M_{\square ij}\to M_{\square i,\,j+2d+1}$ denote the homotopy $\rho_{[i,\,i+1],\,j}\cup \rho_{i,\,[j,\,j+1]}\cup
\rho_{[i,\,i+1],\,j+1}\cup \rho_{i+1,\,[j,\,j+1]}$.
Then by a similar construction we extend the composition
$M_{\square ij}\xr{\rho_{\square ij}} M_{\square i,\,j+2d+1}\xr{\mu_{d+1}} M_{\square i,\,j+3d+2}$
to a homotopy $\rho_{[i,\,i+1],\,[j,\,j+1]}$ keeping $P_{[i,\,i+1],\,[j,\,j+1]}$ fixed from the bonding map 
$\mu_{3d+2}\:M_{[i,\,i+1],\,[j,\,j+1]}\to M_{[i,\,i+1],\,[j+3d+2,\,j+3d+3]}$ to a map sending
$M_{[i,\,i+1],\,[j,\,j+1]}^{(d)}$ into $P_{[i,\,i+1],\,[j+3d+2,\,j+3d+3]}$.

The bonding maps of the form $\mu_{3d+2}\:M_{[i,\,i+1],\,[j,\,j+1]}\to M_{[i,\,i+1],\,[j+3d,\,j+3d+1]}$ glue together 
into a self-map $\mu_{3d+2}$ of $M$, also called $\sigma$, which is homotopic to $\id_M$ by a homotopy $\Sigma$.
Moreover, $\Sigma$ moves $P$ within itself and moves $Q$ within itself.
The homotopies $\rho_{[i,\,i+1],\,[j,\,j+1]}$ glue together into a homotopy $\rho'$ keeping $P$ fixed from $\sigma\:M\to M$ 
to a map $r'$ sending $M^{(d)}$ into $P$.
Since $\dim M=d+1$, using the condition $\colim_j\lim^1_i G^{d+1}_{ij}=0$ (which we have not used so far) it is not hard 
to amend $r'$ by a homotopy keeping $P$ fixed and sending every $M_{[j,j+1]}$ into $M_{[j,k]}$ for some $k=k(j)$
so that the resulting map $r$ sends the entire $M$ into $P$. 
(The parallel step in \cite{M1}*{proof of Theorem 3.6} is described in more detail.)
Thus we obtain a homotopy $\rho$ keeping $P$ fixed from $\sigma$ to a map $r\:M\to P$.

The restriction $G\:Q\to P$ of $r$ is an $L_{[0,\infty)}$-$K_{[0,\infty)}$-approaching map.
If $\Phi\:P\x I\to M$ is the natural homotopy from the inclusion map to $F$, then the composition
$P\x I\xr{\Phi} M\xr{r}P$ is a $K_{[0,\infty)}$-$K_{[0,\infty)}$-approaching homotopy from $r|_P$ to $GF$.
Since $\rho$ keeps $P$ fixed, $r|_P=\sigma|_P$.
So $\Sigma$ restricts to a $K_{[0,\infty)}$-$K_{[0,\infty)}$-approaching homotopy from $r|_P$ to $\id_P$.
On the other hand, $\rho$ restricts to a homotopy from $G$ to $\sigma|_Q$.
If $R\:M\to Q$ is the natural retraction, then $R\rho$ restricts to an $L_{[0,\infty)}$-$L_{[0,\infty)}$-approaching 
homotopy from $FG$ to $\sigma|_Q$.
Also $\Sigma$ restricts to an $L_{[0,\infty)}$-$L_{[0,\infty)}$-approaching homotopy from $\sigma|_Q$ to $\id_Q$.
The constructed maps and homotopies respect the base quadrants $p_{[0,\infty),[0,\infty)}$ and $q_{[0,\infty),[0,\infty)}$,
and it follows that $f$ is a pointed fine shape equivalence.
\end{proof}

\subsection*{Disclaimer}

I oppose all wars, including those wars that are initiated by governments at the time when 
they directly or indirectly support my research. The latter type of wars include all wars 
waged by the Russian state in the last 25 years (in Chechnya, Georgia, Syria and Ukraine) 
as well as the USA-led invasions of Afghanistan and Iraq.

\end{document}